\newtheorem{theorem}{Theorem}[section]
\newtheorem{definition}[theorem]{Definition}
\newtheorem{remark}[theorem]{Remark}
\newtheorem{lemma}[theorem]{Lemma}
\newtheorem{corollary}[theorem]{Corollary}
\newtheorem{proposition}[theorem]{Proposition}
\def\maria#1 {\mathbb{F}box {\mathbb{F}ootnote {\ }}\ \mathbb{F}ootnotetext { From Maria: {\color{red}#1}}}
\def\daniele#1 {\mathbb{F}box {\mathbb{F}ootnote {\ }}\ \mathbb{F}ootnotetext { From Daniele: {\color{blue}#1}}}
\def\massimo#1 {\mathbb{F}box {\mathbb{F}ootnote {\ }}\ \mathbb{F}ootnotetext { From Massimo: {\color{violet}#1}}}
\begin{document}
\title{New examples of maximal curves with low genus}
\author[D. Bartoli]{Daniele Bartoli}
\address{Dipartimento di Matematica e Informatica, 
Universit\`a degli Studi di Perugia,
via Vanvitelli 1, 06123 Perugia, Italy}
\email{daniele.bartoli@unipg.it}

\author[M. Giulietti]{Massimo Giulietti}
\address{Dipartimento di Matematica e Informatica, 
Universit\`a degli Studi di Perugia,
via Vanvitelli 1, 06123 Perugia, Italy}
\email{massimo.giulietti@unipg.it}

\author[M. Kawakita]{Motoko Kawakita}
\address{Division of Mathematics, Shiga University of Medical Science, Seta Tsukinowa-cho, Otsu City, Shiga, 520-2192 Japan}
\email{kawakita@belle.shiga-med.ac.jp}

\author[M. Montanucci]{Maria Montanucci}
\address{Department of Applied Mathematics and Computer Science, Technical University of Denmark, Asmussens Allé, 2800 Kgs. Lyngby, Denmark}
\email{marimo@dtu.dk}

\date{\today}
\maketitle

\begin{abstract}
Explicit equations for algebraic curves with genus $4$, $5$, and $10$ that are either maximal or minimal over the finite field with $p^2$ elements are obtained for infinitely many $p$'s. 
The key tool is the investigation of their Jacobian decomposition.
Lists of small $p$'s for which maximality holds are provided. In some cases we also describe the automorphism group of the curve.
\end{abstract}

\section{Introduction}

Throughout the paper, by  curve we mean a projective, non-singular, 
geometrically irreducible algebraic curve defined over a finite field $\mathbb{F}_{q^2}$ of order 
$q^2$. 
A curve $\mathcal{X}$ with genus $g=g(\mathcal{X})$ is called {\em $\mathbb{F}_{q^2}$-maximal} if the number of its 
$\mathbb{F}_{q^2}$-rational points attains the Hasse-Weil upper bound, that is,
   $$ 
   |\mathcal{X}(\mathbb{F}_{q^2})|=q^2+1+2qg\, . 
   $$ 
   
Apart from being mathematical objects with intrinsic interest,  maximal curves are often
used for applications in Coding Theory,
Cryptography, and Finite Geometry. For a survey on maximal curves we refer to \cite[Chapter 10]{HKT}.  
For a given $q$, the largest $g$ for which there exists an $\mathbb F_{q^2}$-maximal curve of genus $g$ is $q(q-1)/2$ \cite{Ih}, and equality holds if and only if 
the curve is the Hermitian curve $\mathcal{H}_q$ with equation $X^{q+1}=Y^q+Y$ \cite{RS}.    

A complete solution of the classification problem for maximal curves seems to be out of reach and looking for new examples is still a very active line of research. 
 A  well-known construction method for maximal curves  is based on a  result of Kleiman \cite{Kleiman},  sometimes attributed to Serre (see \cite{Lachaud}), stating that any non-singular curve which is $\mathbb{F}_{q^2}$-covered by an $\mathbb{F}_{q^2}$-maximal curve is also $\mathbb{F}_{q^2}$-maximal. 
Concrete examples of $\mathbb{F}_{q^2}$-maximal curves which are Galois-covered by $\mathcal{H}_{q}$ can be found e.g. in \cite{GSX,CKT,MZq,MX}. 
For a long time the problem of establishing whether all $\mathbb{F}_{q^2}$-maximal curves were covered by $\mathcal H_{q}$ remained open; see e.g. \cite{Geer}. The problem was solved in \cite{GK}, where examples of $\mathbb{F}_{q^2}$-maximal curves, with $q=p^{3h}>8$,  $p$ a prime,  non-covered by  $\mathcal{H}_q$ were constructed. Applying Kleiman-Serre covering result to these curves and to their generalizations \cite{GGS2010}, \cite{BM} and \cite{Skabe}  provided further examples of maximal curves; see \cite{DO,BM2,FG2,ABB,GQZ,GMQZ}. Other recent constructions can be found in \cite{KTT,TT,TTT,TT2,TT3}.

The aim of this paper is to provide new examples of $\mathbb{F}_{p^2}$-maximal curves with low genus. We remark that an $\mathbb{F}_{p^2}$-maximal curve is also $\mathbb F_{p^{2h}}$-maximal for every positive odd integer $h$.
By a criterion going back to Tate \cite[Theorem 2(d)]{TATE} and explicitly pointed out by Lachaud \cite[Proposition 5]{Lachaud}, a curve defined over $\mathbb F_{p}$ with genus $g$ is $\mathbb{F}_{p^2}$-maximal if and only if its Jacobian is $\mathbb F_{p^2}$-isogenous to the $g$-th power of an $\mathbb F_{p^2}$-maximal elliptic curve. Here, such a criterion is applied to plane curves whose Jacobian decomposition was originally studied in the zero characteristic case.
In this direction, our main achievements are 
Theorem \ref{Genus4:mainTheorem}, Remark \ref{remark},
  Theorem \ref{Genus5:mainTheorem}, and Theorem
\ref{Th:Massimalita}. 
These results rely on a theorem by Kani and Rosen, which provides a decomposition of the Jacobian of a curve under some conditions on its automorphism group. 
\begin{theorem}\cite[Theorem B]{KR}\label{Th:KR}
For a curve $\mathcal{X}$, let $G \leq Aut_{\mathbb F_q}(\mathcal{X})$ be a finite group such that $G = H_1 \cup \cdots \cup  H_m$, where the subgroups $H_i$ satisfy $H_i \cap H_j =\{1_{G}\}$ if $i\neq j$. Then we have the following $\mathbb F_q$-isogeny relation
$$J_{\mathcal{X}}^{m-1}\times J^g_{\mathcal{X}/G} \simeq J_{\mathcal{X}/H_1}^{h_1}\times \cdots \times J_{\mathcal{X}/H_m}^{h_m},$$
where $g = |G|$, $h_i = |H_i|$, and, as usual, $J^r=J\times \cdots J$ ($r$ times).
\end{theorem}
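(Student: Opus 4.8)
The statement is a purely group-theoretic consequence of the more general principle, also due to Kani and Rosen, that idempotent relations in the rational endomorphism algebra of an abelian variety translate into isogeny relations. My plan is therefore to (i) pass from subgroups to idempotents of $\mathbb{Q}[G]$ acting on $J_{\mathcal{X}}$, (ii) extract the relevant idempotent identity from the covering hypothesis $G=H_1\cup\cdots\cup H_m$, and (iii) invoke the idempotent-to-isogeny principle. Throughout, every object and morphism is defined over $\mathbb{F}_q$ because $G\leq \mathrm{Aut}_{\mathbb{F}_q}(\mathcal{X})$, and one uses that $\mathrm{End}^0_{\mathbb{F}_q}(J_{\mathcal{X}}):=\mathrm{End}_{\mathbb{F}_q}(J_{\mathcal{X}})\otimes_{\mathbb{Z}}\mathbb{Q}$ is a semisimple $\mathbb{Q}$-algebra and that the category of abelian varieties up to $\mathbb{F}_q$-isogeny is semisimple with Grothendieck group free on the classes of $\mathbb{F}_q$-simple abelian varieties (Poincar\'e reducibility together with Tate's theorems). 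No hypothesis on the characteristic is needed; the only point to watch is $\mathbb{F}_q$-rationality, which is automatic here.

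First I would recall the dictionary between subgroups and idempotents. For a subgroup $H\leq G$ set $\varepsilon_H:=\frac{1}{|H|}\sum_{h\in H}h\in\mathbb{Q}[G]\subseteq \mathrm{End}^0_{\mathbb{F}_q}(J_{\mathcal{X}})$, an idempotent. If $\pi\colon\mathcal{X}\to\mathcal{X}/H$ is the quotient map, then on the respective Jacobians one has $\pi^{*}\pi_{*}=\sum_{h\in H}h=|H|\,\varepsilon_H$ and $\pi_{*}\pi^{*}=|H|\cdot\mathrm{id}$; consequently $\pi^{*}$ is an isogeny of $J_{\mathcal{X}/H}$ onto its image, which coincides up to isogeny with the image of $\varepsilon_H$, so $J_{\mathcal{X}/H}\sim\varepsilon_H J_{\mathcal{X}}$ over $\mathbb{F}_q$. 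In particular $\varepsilon_{\{1_G\}}J_{\mathcal{X}}=J_{\mathcal{X}}$ and $\varepsilon_G J_{\mathcal{X}}\sim J_{\mathcal{X}/G}$.

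Next I would use the hypotheses on the $H_i$. Since $G=H_1\cup\cdots\cup H_m$ and $H_i\cap H_j=\{1_G\}$ for $i\neq j$, every $\gamma\in G\setminus\{1_G\}$ lies in exactly one $H_i$ while $1_G$ lies in all of them; comparing coefficients of group elements gives, in $\mathbb{Z}[G]$,
\[
\sum_{i=1}^{m}h_i\,\varepsilon_{H_i}\;=\;\sum_{i=1}^{m}\sum_{h\in H_i}h\;=\;m\cdot 1_G+\sum_{\gamma\neq 1_G}\gamma\;=\;(m-1)\,\varepsilon_{\{1_G\}}+g\,\varepsilon_G .
\]
This is an identity between non-negative integer combinations of idempotents of $\mathrm{End}^0_{\mathbb{F}_q}(J_{\mathcal{X}})$. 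Applying the idempotent-to-isogeny principle --- if $\sum_i a_i\varepsilon_i=\sum_j b_j\delta_j$ with $\varepsilon_i,\delta_j$ idempotents of $\mathrm{End}^0_{\mathbb{F}_q}(A)$ and $a_i,b_j\in\mathbb{Z}_{\geq 0}$, then $\prod_i(\varepsilon_i A)^{a_i}\sim\prod_j(\delta_j A)^{b_j}$ --- with $A=J_{\mathcal{X}}$ and the displayed identity yields
\[
J_{\mathcal{X}}^{\,m-1}\times J_{\mathcal{X}/G}^{\,g}\;\sim\;J_{\mathcal{X}/H_1}^{\,h_1}\times\cdots\times J_{\mathcal{X}/H_m}^{\,h_m},
\]
which is the claim.

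The part I expect to be the real obstacle is justifying the idempotent-to-isogeny principle itself, since the idempotents occurring are neither orthogonal nor mutually commuting, so one cannot simply add up Jacobian factors. The resolution is to decompose $A$ up to $\mathbb{F}_q$-isogeny as $\prod_k S_k^{n_k}$ with $S_k$ simple, so that $\mathrm{End}^0_{\mathbb{F}_q}(A)\cong\prod_k M_{n_k}(D_k)$ with $D_k=\mathrm{End}^0_{\mathbb{F}_q}(S_k)$ a division algebra: for an idempotent $\varepsilon$ the class of $\varepsilon A$ in the Grothendieck group is $\sum_k r_k(\varepsilon)[S_k]$, where $r_k(\varepsilon)$ is the $D_k$-rank of the $k$-th component of $\varepsilon$; the function $r_k$ is a positive rational multiple of the reduced trace on $M_{n_k}(D_k)$, hence $\mathbb{Q}$-linear, so the idempotent identity passes to the Grothendieck group, and there equality of effective classes forces the two products of abelian varieties to be $\mathbb{F}_q$-isogenous by uniqueness of the decomposition into simple isogeny factors. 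Verifying that $J_{\mathcal{X}/H}\sim\varepsilon_H J_{\mathcal{X}}$ over the ground field (the ramification bookkeeping behind $\pi^{*}\pi_{*}=\sum_{h\in H}h$) is routine but also needs to be done carefully.
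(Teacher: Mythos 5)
The paper offers no proof of this statement to compare against: it is quoted verbatim from Kani and Rosen \cite{KR} (their Theorem B), so the only meaningful benchmark is their original argument, and your proposal is essentially a correct reconstruction of it. The counting identity $\sum_{i=1}^m h_i\varepsilon_{H_i}=(m-1)\,1_G+g\,\varepsilon_G$ in $\mathbb{Q}[G]$, obtained exactly as you do from the covering $G=H_1\cup\cdots\cup H_m$ with pairwise trivial intersections, is precisely the idempotent relation Kani and Rosen feed into their Theorem A (idempotent relations in $\operatorname{End}^0$ imply isogeny relations), combined with the standard identification $\varepsilon_H J_{\mathcal{X}}\sim J_{\mathcal{X}/H}$ coming from $\pi_*\pi^*=|H|$ and $\pi^*\pi_*=\sum_{h\in H}h$ for the quotient map $\pi\colon\mathcal{X}\to\mathcal{X}/H$; your sketch of the idempotent-to-isogeny principle itself, via $\operatorname{End}^0_{\mathbb{F}_q}(J_{\mathcal{X}})\cong\prod_k M_{n_k}(D_k)$ and the fact that the multiplicity of the simple factor $S_k$ in $\varepsilon J_{\mathcal{X}}$ is a $\mathbb{Q}$-linear (reduced-trace) function of $\varepsilon$, is likewise the standard proof of Theorem A. Two minor remarks: Poincar\'e complete reducibility and uniqueness of the simple decomposition hold in the isogeny category over an arbitrary field, so the appeal to Tate's theorems is unnecessary (though harmless); and the $\mathbb{F}_q$-rationality bookkeeping you flag is indeed the only thing to check for the isogeny to be defined over $\mathbb{F}_q$, which is automatic here because $G\leq \operatorname{Aut}_{\mathbb{F}_q}(\mathcal{X})$ makes all the $\varepsilon_H$ and quotient maps $\mathbb{F}_q$-rational.
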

\begin{corollary}\cite[Example 1]{KR}\label{Ex:KR}
With the same notation as in Theorem \ref{Th:KR}, if $G$ is the direct product of two cyclic groups of order $2$, say  $G=\{1_G,\alpha_1,\alpha_2,\alpha_3\}$, then
$$J_{\mathcal{X}}\times J^2_{\mathcal{X}/G} \simeq J_{\mathcal{X}/\langle \alpha_1\rangle}\times J_{\mathcal{X}/\langle \alpha_2\rangle}\times J_{\mathcal{X}/\langle \alpha_3\rangle}.$$
\end{corollary}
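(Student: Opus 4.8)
The plan is to obtain the stated isogeny as a direct specialization of Theorem \ref{Th:KR}, followed by a cancellation argument in the category of abelian varieties up to isogeny. First I would take $G=\{1_G,\alpha_1,\alpha_2,\alpha_3\}$ to be the Klein four-group: it is the union of its three subgroups of order $2$, namely $H_i=\langle\alpha_i\rangle$ for $i=1,2,3$, and any two of these intersect trivially. Hence the hypotheses of Theorem \ref{Th:KR} are met with $m=3$, $g=|G|=4$, and $h_1=h_2=h_3=2$.

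Applying Theorem \ref{Th:KR} verbatim then yields the $\mathbb{F}_q$-isogeny
$$J_{\mathcal{X}}^{2}\times J_{\mathcal{X}/G}^{4}\simeq J_{\mathcal{X}/\langle\alpha_1\rangle}^{2}\times J_{\mathcal{X}/\langle\alpha_2\rangle}^{2}\times J_{\mathcal{X}/\langle\alpha_3\rangle}^{2},$$
which, since the product of abelian varieties is commutative up to isogeny, can be rewritten as
$$\bigl(J_{\mathcal{X}}\times J_{\mathcal{X}/G}^{2}\bigr)^{2}\simeq\bigl(J_{\mathcal{X}/\langle\alpha_1\rangle}\times J_{\mathcal{X}/\langle\alpha_2\rangle}\times J_{\mathcal{X}/\langle\alpha_3\rangle}\bigr)^{2}.$$

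The last step is to pass from this relation between squares to the claimed relation. Here I would invoke Poincaré's complete reducibility theorem: over any field the category of abelian varieties up to isogeny is semisimple, so every abelian variety is $\mathbb{F}_q$-isogenous to a product of $\mathbb{F}_q$-simple abelian varieties, uniquely up to reordering and isogeny. Consequently the commutative monoid of $\mathbb{F}_q$-isogeny classes under product is free abelian, hence cancellative, and in particular $A^{2}\simeq B^{2}$ forces $A\simeq B$. Taking $A=J_{\mathcal{X}}\times J_{\mathcal{X}/G}^{2}$ and $B=J_{\mathcal{X}/\langle\alpha_1\rangle}\times J_{\mathcal{X}/\langle\alpha_2\rangle}\times J_{\mathcal{X}/\langle\alpha_3\rangle}$ gives the corollary.

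I do not expect a genuine obstacle: the only point needing care is the final cancellation, which is not purely formal monoid algebra but rests on the semisimplicity of the isogeny category (Poincaré reducibility). Everything else is bookkeeping — verifying that the Klein four-group satisfies the covering hypothesis of Theorem \ref{Th:KR} and substituting the values of $m$, $g$, and the $h_i$.
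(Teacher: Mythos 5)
Your argument is correct: the Klein four-group does satisfy the covering hypothesis of Theorem \ref{Th:KR} with $m=3$, $g=4$, $h_i=2$, the squared relation $\bigl(J_{\mathcal{X}}\times J_{\mathcal{X}/G}^{2}\bigr)^{2}\simeq\bigl(J_{\mathcal{X}/\langle\alpha_1\rangle}\times J_{\mathcal{X}/\langle\alpha_2\rangle}\times J_{\mathcal{X}/\langle\alpha_3\rangle}\bigr)^{2}$ follows, and your cancellation step is sound, since Poincar\'e complete reducibility holds over $\mathbb{F}_q$ and makes the monoid of $\mathbb{F}_q$-isogeny classes free on the classes of $\mathbb{F}_q$-simple abelian varieties, so $A^2\simeq B^2$ indeed forces $A\simeq B$. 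However, this is a genuinely different route from the source: the paper gives no proof but cites Kani--Rosen's Example 1, which is obtained there \emph{directly} from an idempotent relation in $\mathbb{Q}[G]$, namely $\varepsilon_{H_1}+\varepsilon_{H_2}+\varepsilon_{H_3}=1+2\varepsilon_G$ for the Klein four-group (where $\varepsilon_H=\tfrac{1}{|H|}\sum_{h\in H}h$), which their general Theorem A/B translates into the stated isogeny with the correct multiplicities from the start --- no squaring and hence no cancellation is needed. What each approach buys: yours is self-contained relative to the theorem as quoted in this paper, at the price of invoking the (standard but not purely formal) semisimplicity of the isogeny category, which you rightly flag as the crux; the Kani--Rosen derivation requires their finer idempotent machinery but yields the exact relation immediately and illustrates why such ``square-root'' relations are available beyond what the union-of-subgroups statement alone provides.
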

Theorem \ref{Th:KR}  has already been used e.g. in \cite{AAMZ,Howe,Howe2,Kawakita,Kawakita2,Kawakita3,Kawakita4,NIITSUMA,Bouw} to describe examples of curves with many points with respect to their genus, including maximal curves. 
For some curves investigated in this work,  Theorem \ref{Th:KR}  gives a decomposition of their Jacobian  into elliptic curves. In order to establish whether these factors are isogeneous or not, we will use modular polynomials of low degrees, for which we refer to \cite{Modular1} and the web database \cite{Modular2}.

The full $\mathbb F_{p^2}$-automorphism group $G$ is an invariant of great relevance for an $\mathbb F_{p^2}$-maximal curve $\mathcal X$. In fact, by Kleiman-Serre covering result, for each subgroup $H$ of  $G$ the quotient curve $\mathcal X/H$ is still $\mathbb{F}_{p^2}$-maximal. Therefore, automorphism groups of some of the curves under investigation are determined.

 The paper is organized as follows. In Section \ref{Section:Genus4} we study the curve $\mathcal C$ with equation $$\frac{4(x^2-x+1)^3}{27x^2(x-1)^2}+\frac{4(y^2-y+1)^3}{27y^2(y-1)^2}=1.$$
 In characteristic zero, $\mathcal C$ is one of the two genus $4$ curves with exactly $72$ automorphisms and its Jacobian decomposes as the $4$-th power of an elliptic curve; see \cite{wolfart}. We prove that such a curve is $\mathbb{F}_{p^2}$-maximal for infinitely many primes $p$; see Theorem \ref{Genus4:mainTheorem} and Remark \ref{remark}. We show that its full automorphism group is isomorphic to $(S_3\times S_3)\rtimes C_2$, as it happens in characteristic zero; see Section \ref{Genus4:FullAutomorphismGroup}. 
 
 In Section \ref{Section:Genus5} we deal with a family of curves with genus $5$, given by Equation \eqref{family}. In characteristic zero, these curves were characterized by Shaska \cite{shas} as the genus $5$ hyperelliptic curves with automorphism group isomorphic to the direct product of $A_4$ and a group of order $2$; Paulhus later observed that their Jacobian decomposes in the product of three isogenous elliptic curves and a hyperelliptic curve of genus $2$; see \cite[Proposition 4]{PAULHUS}. The main result of the section is Theorem \ref{Genus5:mainTheorem}, which states that some curves in the family are either $\mathbb{F}_{p^2}$-maximal or $\mathbb{F}_{p^2}$-minimal (that is, attaining the Hasse-Weil lower bound) for infinitely many primes $p$. A list of $p$ for which specific curves of the family are $\mathbb{F}_{p^2}$-maximal is provided; see Theorem \ref{Th:Massimalita}. 
In Section \ref{Section:Genus5Characterization} we extend Shaska's characterization result to the positive characteristic case; see Theorem \ref{caratterizzazione}. 

Finally, in Section \ref{Section:HigherGenus} we report some results on the Jacobian decomposition and the $\mathbb{F}_{p^2}$-maximality  of other curves with genus  $10$.




\section{A curve of genus $4$}\label{Section:Genus4}
In this section we study the plane curve $\mathcal{C}$ of the affine equation

\begin{equation}\label{Equation:Genus4}
\frac{4(X^2-X+1)^3}{27X^2(X-1)^2}+\frac{4(Y^2-Y+1)^3}{27Y^2(Y-1)^2}=1
\end{equation} 
over the finite fields $\mathbb{F}_{p}$, $p> 7$ prime.


\subsection{$\mathbb F_{p^2}$-maximality of $\mathcal{C}$}
Let $F=\mathbb{F}_{p^2}(x,y)$ with $(4(x^2-x+1)^3)/(27x^2(x-1)^2)+(4(y^2-y+1)^3)/(27y^2(y-1)^2)=1$ be the function field of the curve $\mathcal{C}$. Let $\tau_1$, $\tau_2$, $\sigma$ be the order-$2$ automorphisms of $F$ defined by $\sigma(x,y)=(y,x)$, $\tau_1(x,y)=(-x+1,y)$, $\tau_2(x,y)=(x,-y+1)$. We consider the group $G=\langle \tau_1,\tau_2,\sigma\rangle$. Such a group is elementary abelian of order $8$ and can be written as 
\begin{equation}\label{equation:GruppoG}
G=\langle \tau_1\rangle \cup \langle \tau_2\rangle  \cup \langle \sigma\rangle \cup \langle \sigma \tau_1\rangle\cup \langle \sigma \tau_1\tau_2\rangle,
\end{equation}
where $\langle \sigma \tau_1\rangle=\{id, \sigma \tau_1,\tau_1\tau_2,\sigma \tau_2\}$ and $|\langle \tau_1\rangle|=|\langle \tau_2\rangle|=| \langle \sigma\rangle|=|\langle \sigma \tau_1\tau_2\rangle|=2$.


\begin{proposition} \label{tau1}
The Jacobian variety of both the quotient curves  $\mathcal{C}/\langle \tau_1 \rangle$ and $\mathcal{C}/\langle \tau_2 \rangle$ is $\mathbb{F}_p$-isogenous to twice the Jacobian variety of the elliptic curve with equation   
$$Y^2 = X^3 + \frac{9}{16}X^2 + \frac{3}{16}X + \frac{1}{64}.$$
\end{proposition}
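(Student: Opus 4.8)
The plan is to realise $\mathcal{C}/\langle\tau_1\rangle$ explicitly, exhibit on it a Klein four-group of automorphisms all of whose quotients have genus at most $1$, and then read off the two elliptic factors of its Jacobian from Corollary \ref{Ex:KR}. Since $\tau_1(x,y)=(1-x,y)$ and the rational function $4(t^2-t+1)^3/(27t^2(t-1)^2)$ is invariant under $t\mapsto 1-t$, the subfield of $F$ fixed by $\tau_1$ is $\mathbb{F}_{p^2}(u,y)$ with $u=x^2-x$, and Equation \eqref{Equation:Genus4} becomes
\[
\frac{4(u+1)^3}{27u^2}+\frac{4(y^2-y+1)^3}{27y^2(y-1)^2}=1 .
\]
Using the identity $27u^2-4(u+1)^3=-(4u+1)(u-2)^2$, this rewrites as $4(y^2-y+1)^3u^2=-(4u+1)(u-2)^2y^2(y-1)^2$. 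A Riemann--Hurwitz computation for the double cover $\mathcal{C}\to\mathcal{C}/\langle\tau_1\rangle$ — one checks that $\tau_1$ fixes exactly two points of $\mathcal{C}$, lying over $x=\tfrac12$ — shows that $\mathcal{C}/\langle\tau_1\rangle$ has genus $2$; denote by $\iota$ its hyperelliptic involution, defined over the prime field.

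Since $\tau_1$ and $\tau_2$ commute, $\tau_2$ induces the involution $\bar\tau_2\colon(u,y)\mapsto(u,1-y)$ on $\mathcal{C}/\langle\tau_1\rangle$, and $\mathcal{C}/\langle\tau_1,\tau_2\rangle$ has function field $\mathbb{F}_{p^2}(u,v)$ with $v=y^2-y$ and equation $4(u+1)^3/(27u^2)+4(v+1)^3/(27v^2)=1$. A second Riemann--Hurwitz count (equivalently, a direct genus computation for this bidegree-$(3,3)$ model) shows that $\mathcal{C}/\langle\tau_1,\tau_2\rangle$ has genus $1$; in particular $\bar\tau_2\neq\iota$, so $\langle\bar\tau_2,\iota\rangle\cong C_2\times C_2$ acts on $\mathcal{C}/\langle\tau_1\rangle$. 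Applying Corollary \ref{Ex:KR} to this group and using that the quotients of $\mathcal{C}/\langle\tau_1\rangle$ by $\langle\iota\rangle$ and by $\langle\bar\tau_2,\iota\rangle$ are rational, one obtains the $\mathbb{F}_p$-isogeny
\[
J_{\mathcal{C}/\langle\tau_1\rangle}\simeq J_{E_1}\times J_{E_2},\qquad E_1:=\mathcal{C}/\langle\tau_1,\tau_2\rangle,\quad E_2:=\bigl(\mathcal{C}/\langle\tau_1\rangle\bigr)/\langle\bar\tau_2\iota\rangle ,
\]
with $E_1$ and $E_2$ elliptic curves defined over $\mathbb{F}_p$ (a rational point is visible on each: e.g. $(u,v)=(-1,2)$ on $E_1$).

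It remains to identify $E_1$ and $E_2$. For $E_1$ one puts $4(u+1)^3/(27u^2)+4(v+1)^3/(27v^2)=1$ into Weierstrass form using the point $(-1,2)$; for $E_2$ one first brings $\mathcal{C}/\langle\tau_1\rangle$ into the shape $Y^2=g(X^2)$ with $\deg g=3$, in which $\bar\tau_2$ and $\bar\tau_2\iota$ are $X\mapsto-X$ and $(X,Y)\mapsto(-X,-Y)$, so that one quotient is $Y^2=g(X)$ and the other $Y^2=Xg(X)$. Carrying out these (routine but lengthy) computations over $\mathbb{Q}$ one checks that $E_1$ and $E_2$ are $\mathbb{Q}$-isogenous — indeed $\mathbb{Q}$-isomorphic, after an explicit change of variables — to $E\colon Y^2=X^3+\tfrac{9}{16}X^2+\tfrac{3}{16}X+\tfrac{1}{64}$, which recovers mod $p$ the characteristic-zero decomposition of \cite{wolfart}; when an isogeny $E_i\to E$ is only cyclic it is certified through a modular polynomial $\Phi_\ell(j(E_i),j(E))=0$ of small degree together with one point count fixing the quadratic twist, cf. \cite{Modular1,Modular2}. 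Since $p>7$ is a prime of good reduction for all curves involved, these $\mathbb{Q}$-isogenies reduce to $\mathbb{F}_p$-isogenies, whence $J_{\mathcal{C}/\langle\tau_1\rangle}$ is $\mathbb{F}_p$-isogenous to $E\times E$. Finally $\sigma(x,y)=(y,x)$ is defined over $\mathbb{F}_p$ and conjugates $\tau_1$ to $\tau_2$, so $\mathcal{C}/\langle\tau_2\rangle\cong_{\mathbb{F}_p}\mathcal{C}/\langle\tau_1\rangle$ and the statement for $\mathcal{C}/\langle\tau_2\rangle$ follows.

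The main obstacle is this last step: producing the model $Y^2=g(X^2)$ of $\mathcal{C}/\langle\tau_1\rangle$ that simultaneously displays both elliptic quotients, computing $E_1$ and $E_2$ in closed form, and — crucially, to get an isogeny over $\mathbb{F}_p$ and not merely over $\overline{\mathbb{F}}_p$ — verifying that one lands on the correct quadratic twist of $E$.
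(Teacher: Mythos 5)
Your skeleton is the same as the paper's: pass to the genus-$2$ quotient $\mathcal{C}/\langle\tau_1\rangle$ via the $\tau_1$-invariants $x^2-x$ and $y$, split its Jacobian with Corollary \ref{Ex:KR} applied to the Klein four-group generated by the hyperelliptic involution and the involution induced by $\tau_2$, identify the two elliptic quotients with $E:Y^2=X^3+\frac{9}{16}X^2+\frac{3}{16}X+\frac{1}{64}$ over $\mathbb{Q}$, and reduce modulo $p$; the paper handles $\mathcal{C}/\langle\tau_2\rangle$ by the symmetric computation, and your conjugation by $\sigma$ is an equivalent shortcut. Up to the point where the two elliptic factors appear, the argument is sound.

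The gap is exactly the step you flag as ``the main obstacle'': the identification of the two elliptic factors is asserted, not performed, and it is the entire content of the proposition. The paper's proof consists precisely of these explicit data: the hyperelliptic model $\mathcal{G}:v^2=4u^6-12u^5+21u^4-22u^3+21u^2-12u+4$ of $\mathcal{C}/\langle\tau_1\rangle$, Weierstrass models of the two elliptic quotients --- one of which is $E$ itself, the other being $Y^2=X^3+\frac{11}{12}X^2+\frac{1}{9}X+\frac{1}{81}$ --- and an explicit degree-$3$ isogeny between them with rational coefficients, which reduces to an $\mathbb{F}_p$-isogeny for every $p>7$. Without producing such data, the conclusion that both factors are $\mathbb{F}_p$-isogenous to $E$ for every $p>7$ is not established. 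In particular, your parenthetical claim that $E_1$ and $E_2$ are even $\mathbb{Q}$-isomorphic to $E$ is unsupported (for the paper's choice of involutions it is false: the two factors have distinct $j$-invariants, $21^3/2^3$ and $-3\cdot 73^3/2^9$, and are related only by a $3$-isogeny), and your fallback certificate ``$\Phi_\ell(j_1,j_2)=0$ plus one point count fixing the quadratic twist'' pins the isogeny down only over $\overline{\mathbb{F}}_p$, respectively at the single prime where the count is made; it does not give an $\mathbb{F}_p$-isogeny to $E$ (rather than to a twist) uniformly in $p$. What is needed, and what the paper supplies, is an isogeny written out over $\mathbb{Q}$ (or an argument uniform in $p$), after which reduction at the primes $p>7$ of good reduction finishes the proof. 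A minor further loose end: your Riemann--Hurwitz count presupposes that $\tau_1$ fixes no places over $x=\infty$; this is true but must be checked, and in the paper it is subsumed in the explicit isomorphism with $\mathcal{G}$.
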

\proof
Note that $\tau_1$ fixes both $y_1 = y$ and $x_1=x(-x+1)$. As $\tau_1$ is an involution, the function field of $\mathcal C/\tau_1$ over $\overline{\mathbb{F}}_p$ is $\mathbb F_p$-isomorphic to $\overline{\mathbb{F}}_p(x_1,y_1)$. A straightforward computation gives 
$$4 (  (-y_1+1 )^3 x_1^2 (x_1-1)^2 +(x_1^2-x_1+1 )^3 y_1^2)-27 y_1^2 x_1^2 (x_1-1 )^2 )=0.$$
Consider the genus $2$ hyperellittic function field $\overline{\mathbb{F}}_p(u,v)$ with
\begin{equation}\label{hyperellittic}
v^2  = 4u^6 - 12u^5 + 21u^4 - 22u^3 + 21u^2 - 12u + 4,
\end{equation}
and let $\mathcal{G}: Y^2  = 4X^6 - 12X^5 + 21X^4 - 22X^3 + 21X^2 - 12X + 4$ be the corresponding hyperelliptic curve of genus 2. 
Then $\overline{\mathbb{F}}_p(x_1,y_1)\cong \overline{\mathbb{F}}_p(u,v)$ via the $\mathbb{F}_{p}$-isomorphism 
$\psi:\overline{\mathbb{F}}_p(u,v)\to \overline{\mathbb{F}}_p(\xi,\eta)$ defined by

$$\psi(u,v)=\left(\frac{a_1(u,v)}{a_3(u,v)},\frac{a_2(u,v)}{a_3(u,v)}\right),$$ with 

\begin{eqnarray*}
a_1(u,v)&=&\frac{1}{2}u^8 - \frac{3}{2}u^7 + \frac{21}{8}u^6 - \frac{1}{4}u^5v - \frac{13}{4}u^5 + \frac{3}{8}u^4v + 
    \frac{17}{4}u^4\\&&
    + \frac{1}{8}u^3v - \frac{13}{4}u^3 + \frac{1}{8}u^2v + \frac{21}{8}u^2 + \frac{3}{8}uv 
    - \frac{3}{2}u - \frac{1}{4}v + \frac{1}{2},\\
a_2(u,v)&=& u^6 - \frac{3}{2}u^5 + u^4 + \frac{1}{2}u^3v - \frac{1}{2}u^3 - \frac{1}{2}u^2v,\\
a_3(u,v)&=&u^6 - 2u^5 + 2u^4 - 2u^3 + u^2.\\
\end{eqnarray*} 
Hence $\mathcal{G}\cong\mathcal{C}/\langle \tau_1 \rangle$ and applying the same argument to $x_2=x$, $y_2=y(-y+1)$, and $\tau_2$, we see  that $\mathcal{G}\cong\mathcal{C}/\langle \tau_2 \rangle$ as well.

Now we investigate the Jacobian decomposition of the hyperellittic curve $\mathcal{G}$. We apply  Corollary \ref{Ex:KR} to $\mathcal{G}$ and the automorphism group of order $4$ generated by the involutions
$$\psi_1(u,v)=(u,-v), \qquad \psi_2 (u,v)= (-u+1 ,v).$$
It is immediate to see that the quotient curve $\mathcal{G}/\langle \psi_1\rangle$ is rational, whereas both $\mathcal{G}/\langle \psi_2\rangle$ and $\mathcal{G}/\langle \psi_1\psi_2\rangle$ are elliptic. In fact,
     $\psi_2$ fixes $u_1=-u^2+u$ and $v_1=v$, and 
    $${v_1}^2=-4{u_1}^3+9{u_1}^2-12{u_1}+4$$
    holds; also,
    $\psi_1\psi_2$ fixes $u_2=u_1$ and $v_2=v(2u-1)$ and 
    $${v_2}^2=(-4{u_2}^3+9{u_2}^2-12{u_2}+4)(-4{u_2}+1).$$
The function fields of the curves
$\mathcal{G}/\langle \psi_2\rangle$ and 
$\mathcal{G}/\langle \psi_1\psi_2\rangle$  are
$\mathbb{F}_p$-isomorphic to $\overline{\mathbb{F}}_p(\mathcal{C}_1)=\overline{\mathbb{F}}_p(x^\prime,y^\prime)$ and $\overline{\mathbb{F}}_p(\mathcal{C}_2)=\overline{\mathbb{F}}_p(x^{\prime\prime},y^{\prime\prime})$ with

$$(y^{\prime})^2 = (x^{\prime})^3 + \frac{9}{16}(x^{\prime})^2 + \frac{3}{16}(x^{\prime}) + \frac{1}{64}$$
and 
$$(y^{\prime\prime})^2 = (x^{\prime\prime})^3 + \frac{11}{12}(x^{\prime\prime})^2 + \frac{1}{9}(x^{\prime\prime}) + \frac{1}{81},$$

respectively, where 
$$ x^{\prime}=-\frac{1}{4}u_1, \quad y^{\prime}=-\frac{1}{16}v_1, \qquad x^{\prime\prime}=\frac{1/6 u_2 - 1/24}{-(u_2)^2 + 1/2u_2 - 1/16}, \quad y^{\prime\prime}=\frac{1/36v_2}{-(u_2)^2 + 1/2u_2 - 1/16}.$$ 
The two elliptic curves $\mathcal{C}_1$ and $\mathcal{C}_2$ are isogenous via the isogeny defined over their function field by $\theta(x^{\prime\prime},y^{\prime\prime})=(\theta_1(x^{\prime\prime},y^{\prime\prime}),\theta_2(x^{\prime\prime},y^{\prime\prime}))$ where
\begin{eqnarray*}
    \theta_1&=&\frac{1/4(x^{\prime\prime})^3 - 1/9x^{\prime\prime}}{(x^{\prime\prime} + 2/9)^2},\\
    \theta_2&=&\frac{1/8(x^{\prime\prime})^3y^{\prime\prime} + 1/12(x^{\prime\prime})^2y^{\prime\prime} + 
    1/18(x^{\prime\prime})y^{\prime\prime} - 1/81y^{\prime\prime}}{(x^{\prime\prime} + 2/9)^3}.\\
\end{eqnarray*}
By Corollary \ref{Ex:KR}, the claim follows. 

\endproof

\begin{proposition} \label{sigma}

The Jacobian varieties of both the quotient curves  $\mathcal{C}/\langle \sigma\rangle$ and $\mathcal{C}/\langle \sigma\tau_1\tau_2\rangle$ are $\mathbb{F}_p$-isogenous to the Jacobian variety of the elliptic curve of equation   $$Y^2 = X^3 + \frac{9}{16}X^2 + \frac{3}{16}X + \frac{1}{64}.$$

\end{proposition}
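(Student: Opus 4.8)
The plan is to follow the strategy of Proposition~\ref{tau1}, which here is shorter because the two quotient curves have genus $1$ rather than $2$, so that no auxiliary genus-$2$ curve is needed.

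First I would handle $\mathcal{C}/\langle\sigma\rangle$. Since $\sigma(x,y)=(y,x)$ is an involution, the function field of $\mathcal{C}/\langle\sigma\rangle$ over $\overline{\mathbb{F}}_p$ is $\overline{\mathbb{F}}_p(x,y)^{\sigma}=\overline{\mathbb{F}}_p(s,q)$ with $s=x+y$ and $q=xy$, and rewriting the defining polynomial of $\mathcal{C}$ arising from \eqref{Equation:Genus4} (which is symmetric in $x$ and $y$) in terms of $s$ and $q$ produces an explicit plane model $\Phi(s,q)=0$ of $\mathcal{C}/\langle\sigma\rangle$. One checks that this curve has genus $1$, for instance by a Riemann--Hurwitz count: the fixed points of $\sigma$ on $\mathcal{C}$ lie over the solutions of $\tfrac{4(x^2-x+1)^3}{27x^2(x-1)^2}=\tfrac12$, equivalently over the roots of the sextic $8(x^2-x+1)^3=27x^2(x-1)^2$, and there are none at infinity. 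Then I would bring $\Phi(s,q)=0$ into Weierstrass form over $\mathbb{F}_p$ and verify that the resulting elliptic curve is $\mathbb{F}_p$-isomorphic to $\mathcal{C}_1:Y^2=X^3+\tfrac{9}{16}X^2+\tfrac{3}{16}X+\tfrac{1}{64}$, exhibiting the corresponding $\mathbb{F}_p$-rational map of function fields explicitly, exactly in the spirit of the maps $\psi$ and $\theta$ in the proof of Proposition~\ref{tau1}. This yields that $J_{\mathcal{C}/\langle\sigma\rangle}$ is $\mathbb{F}_p$-isogenous --- in fact isomorphic --- to $\mathcal{C}_1$.

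For $\mathcal{C}/\langle\sigma\tau_1\tau_2\rangle$ I would not repeat the computation but instead exploit the group structure of $G$: one checks directly that $\sigma\tau_1\tau_2$ is the involution $(x,y)\mapsto(1-y,1-x)$ and that $\tau_1\,\sigma\,\tau_1^{-1}=\sigma\tau_1\tau_2$ (equivalently $\sigma\tau_1\sigma=\tau_2$). Hence $\tau_1$ carries $\langle\sigma\rangle$-orbits to $\langle\sigma\tau_1\tau_2\rangle$-orbits and descends to an $\mathbb{F}_p$-isomorphism $\mathcal{C}/\langle\sigma\rangle\to\mathcal{C}/\langle\sigma\tau_1\tau_2\rangle$; in particular the two Jacobians are $\mathbb{F}_p$-isogenous, so $J_{\mathcal{C}/\langle\sigma\tau_1\tau_2\rangle}$ is again $\mathbb{F}_p$-isogenous to $\mathcal{C}_1$, which proves the proposition.

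The only real obstacle is the first part: choosing coordinates on $\mathcal{C}/\langle\sigma\rangle$ that keep the passage to Weierstrass form tractable, and making sure the identification with $\mathcal{C}_1$ holds over $\mathbb{F}_p$ and not merely over $\overline{\mathbb{F}}_p$. This is a routine (if lengthy) symbolic computation with no conceptual content, parallel to the explicit isogeny $\theta$ exhibited in Proposition~\ref{tau1}; once it is in place, the second quotient costs only the short conjugation argument above.
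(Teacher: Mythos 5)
Your outline is essentially the paper's own route. The paper also treats $\mathcal{C}/\langle \sigma\tau_1\tau_2\rangle$ by conjugacy (it conjugates $\langle\sigma\rangle$ by $\tau_2$ where you use $\tau_1$; both work, precisely because $\sigma\tau_1\sigma=\tau_2$, i.e.\ the group generated by $\tau_1,\tau_2,\sigma$ is dihedral rather than abelian, as your identity shows), and it also computes $\mathcal{C}/\langle\sigma\rangle$ in the coordinates $x+y$, $xy$ and gets genus $1$ from the six fixed points of $\sigma$ via Riemann--Hurwitz.

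The one genuine divergence is the final identification. The paper does \emph{not} claim $\mathcal{C}/\langle\sigma\rangle\cong\mathcal{E}$ with $\mathcal{E}:Y^2=X^3+\tfrac{9}{16}X^2+\tfrac{3}{16}X+\tfrac{1}{64}$; it only exhibits an explicit isogeny, defined over $\mathbb{Q}$ and checked with MAGMA, remarking that it reduces to an $\mathbb{F}_p$-isogeny whenever $p$ does not divide its coefficients, the finitely many remaining primes being checked separately. Your plan instead rests on the stronger assertion that the quotient is $\mathbb{F}_p$-isomorphic to $\mathcal{E}$, and as written this is the one unsupported step: it is not automatic in this situation, since in Proposition \ref{tau1} the two elliptic factors $\mathcal{C}_1$ and $\mathcal{C}_2$ are isogenous but have different $j$-invariants. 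It does, however, turn out to be true. Passing from $(s,q)=(x+y,xy)$ to the $\langle\sigma,\tau_1\tau_2\rangle$-invariants $E=2q-s+1$ and $B=q(q-s+1)$ (whose fixed field is rational) and then parametrizing, one finds for $\mathcal{C}/\langle\sigma\rangle$ the Weierstrass model $Y^2=X^3-3X^2+24X-48$, with $c_4=-1008=4^4\cdot\bigl(-\tfrac{63}{16}\bigr)$ and $c_6=22464=4^6\cdot\tfrac{351}{64}$; so the quotient and $\mathcal{E}$ share $j=\tfrac{9261}{8}$ and are isomorphic already over $\mathbb{Q}$, hence over $\mathbb{F}_p$ for every $p>7$ (the discriminants and denominators involve only $2$ and $3$). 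So your route works and even sharpens the proposition; the price is that this isomorphism must actually be verified, and done over $\mathbb{Q}$ so that a single computation covers all $p>7$ with the bad primes accounted for, exactly the bookkeeping the paper performs for its isogeny. Had the $j$-invariants disagreed, your plan would have stalled at that point and you would have needed to produce an isogeny instead, which is the computation (via MAGMA, or modular polynomials as in Section \ref{Section:Genus5}) that the paper carries out.
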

\proof
First note that the curves $\mathcal{C}/\langle \sigma\rangle$ and $\mathcal{C}/\langle \sigma\tau_1\tau_2\rangle$ are $\mathbb F_p$-isomorphic since the  subgroups $\langle \sigma\rangle$ and $\langle \sigma\tau_1\tau_2\rangle$ are conjugated in the $\mathbb F_p$-automorphism group of $\mathcal C$. Indeed $\gamma \sigma = \tau_1\tau_2\gamma$, where $\gamma(x,y)=(x,-y+1)$ is an automorphism  of $F$. Therefore, we are going to deal only with the curve $\mathcal{C}/\langle \sigma\rangle$.
 
Both $x_2=x+y$ and $y_2=xy$ are fixed by $\sigma$.
Then $\overline{\mathbb{F}}_p(x_2,y_2)\cong \overline{\mathbb{F}}_p(\mathcal{C}/\langle \sigma\rangle)$.
Also,
$$
\begin{array}{rl}
4 x_2^4 y_2^2 + 4 x_2^4 - 8 x_2^3 y_2^3 - 12 x_2^3 y_2^2 - 12 x_2^3 y_2 - 8 x_2^3 + 4 x_2^2 y_2^4 +   8 x_2^2 y_2^3 + 21 x_2^2 y_2^2 + 8 x_2^2 y_2\\
+ 4 x_2^2 + 12 x_2 y_2^4 + 14 x_2 y_2^3 + 14 x_2 y_2^2 +  12 x_2 y_2 - 8 y_2^5 - 19 y_2^4 - 38 y_2^3 - 19 y_2^2 - 8 y_2&=0.\\
\end{array}
$$
As $\sigma$ fixes exactly $6$ points of $\mathcal C$, by the Riemann-Hurwitz formula the genus of $\mathcal{C}/\langle \sigma\rangle$ is $1$. 

An isogeny $\varphi$ between $\mathcal{C}/\langle \sigma \rangle$ (with homogeneous coordinates $X_1$, $Y_1$ and $Z_1$) and the elliptic curve 
$$Y^2 Z=X^3+\frac{9}{16} X^2 Z+\frac{3}{16} X Z^2+\frac{1}{64} Z^3$$ can be constructed as follows. Consider the map $\varphi_1$ given by
{\footnotesize \begin{eqnarray*}\varphi_1(X_2)&=&2^5 \cdot7^4 X_2^3 Y_2^5 Z_2 - 2^4\cdot 3 \cdot7^4 X_2^2 Y_2^6 Z_2 - 2^4 \cdot7^4 X_2 Y_2^7 Z_2 + 2^5\cdot 7^4 Y_2^8 Z_2 +
    2^3 \cdot 7^4 X_2^3 Y_2^4 Z_2^2- 2^3\cdot 7^5 X_2^2 Y_2^5 Z_2^2\\
    &&+ 2^3 \cdot7^4 X_2 Y_2^6 Z_2^2 + 2^3\cdot 5\cdot 7^4 Y_2^7 Z_2^2 +
    2^6\cdot 7^4\cdot X_2^3 Y_2^3 Z_2^3 - 2^4 7^5 X_2^2 Y_2^4 Z_2^3 - 2^3\cdot 3^2\cdot 7^4 X_2 Y_2^5 Z_2^3 + 2^5\cdot 7^5 Y_2^6 Z_2^3
     \\
    &&+ 2^3 \cdot7^4 X_2^3 Y_2^2 Z_2^4- 2^4\cdot 7^5 X_2^2 Y_2^3 Z_2^4 - 2 \cdot7^4\cdot 19 X_2 Y_2^4 Z_2^4 + 2\cdot 7^4\cdot 97 Y_2^5 Z_2^4
    + 2^5\cdot 7^4 X_2^3 Y_2 Z_2^5 - 2^3\cdot 7^5 X_2^2 Y_2^2 Z_2^5 \\
    &&- 2^3\cdot 3^2\cdot 7^4 X_2 Y_2^3 Z_2^5 + 2\cdot 3^2 \cdot 7^4\cdot 17 Y_2^4 Z_2^5-2^4 3 7^4 X_2^2 Y_2 Z_2^6 + 2^3\cdot 7^4 X_2 Y_2^2 Z_2^6 +  2^3 \cdot 7^5 Y_2^3 Z_2^6 - 2^4\cdot 7^4 X_2 Y_2 Z_2^7\\
    &&+
    2^3\cdot 3\cdot 5\cdot 7^4Y_2^2 Z_2^7,\\
\varphi_1(Y_2)&=&-2^7\cdot 7^6 X_2^3 Y_2^6 + 2^8 \cdot7^6 X_2^2 Y_2^7 - 2^7\cdot 7^6 X_2 Y_2^8 + 2^6\cdot 3 \cdot7^6 X_2 Y_2^7 Z_2 -
    2^6\cdot 3 \cdot7^6 Y_2^8 Z_2 - 2^5\cdot 7^6\cdot 11 X_2^3 Y_2^4 Z_2^2\\
    &&+ 2^5 \cdot7^6\cdot 29 X_2^2 Y_2^5 Z_2^2-
    2^5\cdot 7^6\cdot 11 X_2 Y_2^6 Z_2^2 - 2^5\cdot 7^5\cdot 127 Y_2^7 Z_2^2 + 2^6\cdot 3\cdot 7^6 X_2^3 Y_2^3 Z_2^3 -
    2^6\cdot 7^6\cdot X_2^2 Y_2^4 Z_2^3 \\
    &&- 2^5\cdot 3\cdot 7^6 X_2 Y_2^5 Z_2^3- 2^5\cdot 5\cdot 7^5\cdot 13 Y_2^6 Z_2^3 -
    2^5\cdot 7^6\cdot 11 X_2^3 Y_2^2 Z_2^4 + 2^9\cdot 7^6\cdot X_2^2 Y_2^3 Z_2^4 - 2^3 \cdot7^6\cdot 67 X_2 Y_2^4 Z_2^4 \\
    &&-
    2^3\cdot 3^3 7^5\cdot 13 Y_2^5 Z_2^4 + 2^6 \cdot3\cdot 7^6 X_2^3 Y_2 Z_2^5- 2^5\cdot 3\cdot 7^6 X_2^2 Y_2^2 Z_2^5 +
    2^4\cdot 5\cdot 7^6 X_2 Y_2^3 Z_2^5 + 2^3\cdot 7^5\cdot 167 Y_2^4 Z_2^5\\
    && - 2^7 \cdot7^6 X_2^3 Z_2^6 + 2^6\cdot 7^6 X_2^2 Y_2 Z_2^6- 2^5\cdot 3 \cdot7^6 X_2 Y_2^2 Z_2^6 - 2^7\cdot 3\cdot 7^5 13 Y_2^3 Z_2^6 + 2^7\cdot 7^6\cdot X_2^2 Z_2^7 +
    2^6 \cdot 7^5\cdot 41 Y_2^2 Z_2^7\\ &&- 2^6 \cdot7^5\cdot 13 Y_2 Z_2^8,\\
\varphi_1(Z_2)&=&Y_2^7 Z_2^2 - 3/2 Y_2^6 Z_2^3 + 15/4 Y_2^5 Z_2^4 + 1/4 Y_2^4 Z_2^5 + 15/4 Y_2^3 Z_2^6 - 3/2 Y_2^2 Z_2^7+ Y_2 Z_2^8.
\end{eqnarray*}}
Then define $$\varphi(X_2:Y_2:1)=(1/(2^6 7^4) \varphi_1(X_2) : 1/(2^9 7^6) \varphi_1(Y_2) - 1/(2^8 7^4)\varphi_1(X_2) + 3^2/(2^2 7) : 1).$$ 
By direct checking with MAGMA $\varphi$ is an isogeny between the two elliptic curves over the field of rationals. Clearly this map provides an isogeny over $\mathbb{F}_{p}$ for every $p$ such that no coefficients of $\varphi_1$ vanish. It has been checked with MAGMA that also for these special values of $p$ the two elliptic curves are $\mathbb{F}_p$-isogenous.
\endproof



\begin{proposition} \label{tau1tau2}
The Jacobian variety of the curve  $\mathcal{C}/\langle \tau_1\tau_2 \rangle$ is $\mathbb{F}_p$-isogenous to twice the Jacobian variety of the elliptic curve of equation   $$Y^2 = X^3 + \frac{9}{16}X^2 + \frac{3}{16}X + \frac{1}{64}.$$
\end{proposition}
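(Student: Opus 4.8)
The plan is to mirror the proof of Proposition~\ref{tau1}, now for the involution $\tau_1\tau_2\colon(x,y)\mapsto(1-x,1-y)$. This involution fixes the functions $u=x(1-x)$, $w=y(1-y)$ and $t=(2x-1)(2y-1)$, which satisfy $t^2=(1-4u)(1-4w)$; and since $x^2-x+1=1-u$, $x^2(x-1)^2=u^2$, and likewise in $w$, Equation~\eqref{Equation:Genus4} becomes
$$\frac{4(1-u)^3}{27u^2}+\frac{4(1-w)^3}{27w^2}=1 .$$
As $[\,\overline{\mathbb F}_p(x,y):\overline{\mathbb F}_p(u,w,t)\,]=2$ and $\tau_1\tau_2$ has order $2$, the field $\overline{\mathbb F}_p(u,w,t)$ with these two relations is the function field of $\mathcal C/\langle\tau_1\tau_2\rangle$; a Riemann–Hurwitz count (checking that $\tau_1\tau_2$ fixes exactly the two places of $\mathcal C$ lying over the singular point at infinity of the affine model) shows this quotient has genus $2$. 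Exactly as in Proposition~\ref{tau1} I would then produce an explicit $\mathbb F_p$-isomorphism bringing $\mathcal C/\langle\tau_1\tau_2\rangle$ to a hyperelliptic model $Y^2=f(X)$ with $\deg f\le 6$.

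Since $\tau_1\tau_2$ is central in $G$, the group $G$ induces on $\mathcal C/\langle\tau_1\tau_2\rangle$ an action of the Klein four-group $\overline V=\{1,\bar\tau_1,\bar\sigma,\bar\sigma\bar\tau_1\}$ (with $\bar\tau_1=\bar\tau_2$), acting by $\bar\tau_1\colon(u,w,t)\mapsto(u,w,-t)$, $\bar\sigma\colon(u,w,t)\mapsto(w,u,t)$, $\bar\sigma\bar\tau_1\colon(u,w,t)\mapsto(w,u,-t)$. Applying Corollary~\ref{Ex:KR} to $\overline V$, and using $\langle\tau_1\tau_2,\tau_1\rangle=\langle\tau_1,\tau_2\rangle$ together with $\langle\tau_1\tau_2,\sigma\tau_1\rangle=\langle\sigma\tau_1\rangle$ (as $(\sigma\tau_1)^2=\tau_1\tau_2$), one gets
$$J_{\mathcal C/\langle\tau_1\tau_2\rangle}\times J^2_{\mathcal C/G}\simeq J_{\mathcal C/\langle\tau_1,\tau_2\rangle}\times J_{\mathcal C/\langle\tau_1\tau_2,\sigma\rangle}\times J_{\mathcal C/\langle\sigma\tau_1\rangle}.$$
Each of the three quotients on the right has genus at most $1$ (they are quotients of $\mathcal C$ by groups of order $4$), while the left-hand side has dimension $2+2\,g(\mathcal C/G)$; hence $\mathcal C/G$ must be rational. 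The quotient $\mathcal C/\langle\tau_1,\tau_2\rangle$ is the genus-$1$ curve with function field $\overline{\mathbb F}_p(u,w)$ and the displayed relation, which — by comparing $j$-invariants, or by identifying it with the elliptic curve $\mathcal C_1$ treated in Proposition~\ref{tau1} — is $\mathbb F_p$-isogenous to $E\colon Y^2=X^3+\tfrac{9}{16}X^2+\tfrac{3}{16}X+\tfrac{1}{64}$; of the remaining two quotients, explicit models computed from the symmetric invariants $u+w$, $uw$, $t(u-w)$ show that one is rational and the other is an elliptic curve $E'$.

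It then remains to prove that $E'$ is $\mathbb F_p$-isogenous to $E$; this is the crux, and I would handle it exactly as in Propositions~\ref{tau1} and~\ref{sigma}: write down an explicit isogeny over $\mathbb Q$ between $E'$ and $E$ (equivalently, verify a low-degree modular-polynomial relation between their $j$-invariants, using \cite{Modular1,Modular2}), which specializes to an $\mathbb F_p$-isogeny for all $p>7$ away from finitely many primes, the exceptional ones being checked directly with MAGMA. Combining this with the displayed isogeny relation and the rationality of $\mathcal C/G$ yields $J_{\mathcal C/\langle\tau_1\tau_2\rangle}\simeq E\times E'\simeq E\times E$, as claimed. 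The main obstacle is precisely this last isogeny — its explicit construction and verification over $\mathbb F_p$ — together with the routine but somewhat delicate bookkeeping needed to decide which of $\mathcal C/\langle\tau_1\tau_2,\sigma\rangle$ and $\mathcal C/\langle\sigma\tau_1\rangle$ is rational and which is $E'$.
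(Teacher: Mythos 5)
Your plan is essentially the paper's own proof, repackaged. The paper also passes to a genus-$2$ model of $\mathcal{C}/\langle \tau_1\tau_2\rangle$, applies Corollary \ref{Ex:KR} to a Klein four-group of that model (the hyperelliptic involution $\psi_2$ together with $\psi_1\colon u''\mapsto 1/u''$), obtains one rational and two elliptic quotients (\eqref{Curva1} and \eqref{Curva2}), and then exhibits explicit isogenies from both elliptic quotients to $\mathcal{E}$, found over $\mathbb{Q}$ and reduced modulo $p$, with MAGMA handling the finitely many bad primes. Your four-group is the image of $G$ in $\mathrm{Aut}(\mathcal{C}/\langle\tau_1\tau_2\rangle)$, and it is in fact the same group: since $\mathcal{C}/\langle\tau_1\tau_2,\sigma\rangle$ turns out to be rational, $\bar\sigma$ is the hyperelliptic involution, and $\bar\tau_1$, $\bar\sigma\bar\tau_1$ play the roles of $\psi_1$, $\psi_1\psi_2$. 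What your organization genuinely adds is the identification of the two elliptic factors as $\mathcal{C}/\langle\tau_1,\tau_2\rangle$ and $\mathcal{C}/\langle\sigma\tau_1\rangle$ (the paper never says this), the derivation of the rationality of $\mathcal{C}/G$ from the dimension count, and the possibility of recycling Proposition \ref{tau1} for the factor $\mathcal{C}/\langle\tau_1,\tau_2\rangle$ — though note that the latter still requires a check, because the genus-$2$ curve $\mathcal{G}$ has more involutions than $\psi_1,\psi_2,\psi_1\psi_2$ (its sextic is palindromic, so the reduced group contains an $S_3$), so you must verify that the involution induced by $\tau_2$ really is one of $\psi_2$, $\psi_1\psi_2$, or else compare $j$-invariants directly as you suggest. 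The points where your write-up is only a promissory note are exactly the computations that constitute the paper's proof: (i) the genus-$2$ claim rests on an unverified fixed-point count at infinity (the paper instead sandwiches $1=g(\mathcal{C}/\langle\sigma\tau_1\rangle)<g(\mathcal{C}/\langle\tau_1\tau_2\rangle)\le 2$; alternatively Corollary \ref{Ex:KR} applied to $\langle\tau_1,\tau_2\rangle$ together with Proposition \ref{tau1} gives it without counting fixed points); and (ii) the crux, an explicit model of $E'=\mathcal{C}/\langle\sigma\tau_1\rangle$ and an explicit isogeny to $\mathcal{E}$ valid for all $p>7$, which is precisely the content of the paper's formulas for $\mathcal{E}_1$, $\mathcal{E}_2$. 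One last point of care: the paper's explicit isogeny for $\mathcal{E}_1$ involves $\xi$ with $\xi^2=-1$, so what the argument really delivers (and all that Theorem \ref{Genus4:mainTheorem} uses) is an $\mathbb{F}_{p^2}$-isogeny; your reduction-from-$\mathbb{Q}$ step should be stated with the same caution about the field of definition.
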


\proof
Note that  
$$x_3=x(-x+1)+y(-y+1), \qquad y_3=\frac{x}{y}+\frac{-x+1}{-y+1}$$
are both fixed by the involution $\tau_1\tau_2$.
They satisfy $L(u,v)=0$, where 
$$
\begin{array}{ll}
L(x_3,y_3):=&16  x_3^6   y_3^4 - 40  x_3^5   y_3^4 - 32  x_3^5   y_3^3 + 32  x_3^5   y_3^2 + 32  x_3^4   y_3^6 - 80  x_3^4   y_3^5
        + 185  x_3^4   y_3^4 + 80  x_3^4   y_3^3 + 128  x_3^4   y_3^2 \\
        {}&         - 32  x_3^4   y_3+ 16  x_3^4 -  40  x_3^3   y_3^6 - 124  x_3^3   y_3^5 - 24  x_3^3   y_3^4 - 482  x_3^3   y_3^3 - 518  x_3^3   y_3^2 - 
        112  x_3^3   y_3 - 104  x_3^3 \\
        {}&         + 16  x_3^2   y_3^8- 48  x_3^2   y_3^7+ 212  x_3^2   y_3^6 -    352  x_3^2   y_3^5 + 1184  x_3^2   y_3^4 - 916  x_3^2   y_3^3 + 2193  x_3^2   y_3^2 + 318  x_3^2   y_3 
       \\ 
       {}&
       + 425x_3^2 - 32x_3 y_3^7 + 48 x_3 y_3^6 - 296  x_3   y_3^5 + 304  x_3   y_3^4 - 1076  x_3   y_3^3 + 524  x_3   y_3^2 - 1312  x_3   y_3 \\
       {}& - 832  x_3       + 32   y_3^6 - 64   y_3^5 + 308   y_3^4 - 488   y_3^3 + 1000   y_3^2 - 1024   y_3 + 1024.
\end{array}
$$

To show that $\overline{\mathbb{F}}_p(x_3,y_3)$ with $L(x_3,y_3)=0$ is the function field of $\mathcal{C}/\langle \tau_1 \tau_2 \rangle$ it is sufficient to note that
$\overline{\mathbb{F}}_p(x_3,y_3)$ has genus $2$. Indeed, from the Hurwitz genus formula and Theorem \ref{Ex:KR} applied with respect to $G$, $\mathcal{C}/ \langle \sigma \tau_1 \rangle$ has genus $1$ and since $\tau_1 \tau_2 \in \langle \sigma \tau_1 \rangle$, $$g(\mathcal{C}/ \langle \sigma \tau_1 \rangle)<g(\mathcal{C}/\langle \tau_1 \tau_2 \rangle) \leq 2.$$ Also, $\overline{\mathbb{F}}_p(x_3,y_3)$ is $\mathbb{F}_p$-isomorphic to $\overline{\mathbb{F}}_p(u',v')$, with

\begin{equation}\label{hyperellittic1}
 {v'}^2 + ({u'}^2 + u')v' = -{u'}^6 - 3{u'}^5 - {u'}^4 - 7{u'}^3 -{u'}^2 - 3{u'} - 1,
 \end{equation}
   
via the isomorphism $\phi(x_3,y_3)=\left(\frac{b_1(x_3,y_3)}{b_3(x_3,y_3)},\frac{b_2(x_3,y_3)}{b_3(x_3,y_3)}\right)$ given by 
\begin{eqnarray*}
b_1(x_3,y_3)&=&-2 x_3^4  + x_3^3   -2 x_3^2,\\
   b_2(x_3,y_3)&=& \frac{2x_3^5 -3 x_3^4  + x_3^3  -2 x_3^2 y_3  - 3 x_3^2  -x_3 y_3   - y_3  -1}{2},\\
b_3(x_3,y_3)&=& x_3^5 + 2 x_3^3  + x_3 .\\
\end{eqnarray*}
Now, $\overline{\mathbb{F}}_p(u',v')$ is $\mathbb{F}_p$-isomoprhic to $\overline{\mathbb{F}}_p(u^{\prime \prime},v^{\prime \prime})$, with 
$$
(v^{\prime\prime})^2=  -4(u^{\prime \prime})^6 - 12(u^{\prime \prime})^5 - 3(u^{\prime \prime})^4 - 26(u^{\prime \prime})^3 - 3(u^{\prime \prime})^2 -   12(u^{\prime \prime}) - 4,
$$
via the isomorphism $\theta(u',v')=\left(\frac{c_1(u',v')}{c_3(u',v')},\frac{c_2(u',v')}{c_3(u',v')}\right)$
\begin{eqnarray*}
c_1(u',v')&=&{u'}^2,\\
   c_2(u',v')&=& {u'}^2+u'+2v',\\
c_3(u',v')&=& {u'}^3.\\
\end{eqnarray*}
Consider the involutory automorphisms of the hyperelliptic function field $\overline{\mathbb{F}}_p(u^{\prime \prime},v^{\prime \prime})$ given by
$$\psi_1(u^{\prime \prime},v^{\prime \prime})=(1/u^{\prime \prime},v^{\prime \prime}/(u^{\prime \prime})^3) \qquad \psi_2 (u^{\prime \prime},v^{\prime \prime})= (u^{\prime \prime},-v^{\prime \prime}).$$
 The fixed fields of $\overline{\mathbb{F}}_p(u^{\prime \prime},v^{\prime \prime})$ with respect to  $\psi_1,\psi_2,\psi_1\psi_2$ are
$$\overline{\mathbb{F}}_p(s,t_1), \,\,\overline{\mathbb{F}}_p(u^{\prime \prime}),\,\,\overline{\mathbb{F}}_p(s,t_2),$$
respectively, where $s=u^{\prime \prime}+1/u^{\prime \prime}$, $t_1=v^{\prime \prime}+v^{\prime \prime}/(u^{\prime \prime})^3$, $t_2=v^{\prime \prime}-v^{\prime \prime}/(u^{\prime \prime})^3$ and 

\begin{equation}\label{Curva1}
t_1^2 - 4s^6 - 12s^5 + 21s^4 + 26s^3 - 51s^2 + 24s - 4=0,
\end{equation}

\begin{equation}\label{Curva2}
-t_2^2 - 4s^6 - 12s^5 + 21s^4 + 42s^3 - 3s^2 - 12s + 4=0.
\end{equation}

Consider the function fields $\overline{\mathbb{F}}_p(s^{\prime},t_1^{\prime})$, $\overline{\mathbb{F}}_p(s^{\prime\prime},t_1^{\prime\prime})$  with 
$$(t_1^{\prime})^2 =(s^{\prime})^3 - 9517824s^{\prime} + 11448262656$$
and 
$$(t_2^{\prime})^2 = (s^{\prime\prime})^3 - 4541184(s^{\prime\prime}) + 493881753.$$
The function field $\overline{\mathbb{F}}_p(s,t_1)$ is isomorphic to $\overline{\mathbb{F}}_p(s^{\prime},t_1^{\prime})$ 
via the isomorphism 

$$\theta^{\prime}(s,t_1)=\left(d_1(s,t_1)/d_3(s,t_1),d_2(s,t_1)/d_3(s,t_1)\right)$$
where

\begin{eqnarray*}
d_1(s,t_1)&=&2304s^5 + 9216u^4 - 2880s^3 + 1152s^2t_1 - 17856s^2 + 2880st_1 + 11520s + 720t_1 - 2304,\\
d_2(s,t_1)&=&248832s^5 + 995328s^4 + 55296s^3t_1 - 311040s^3 + 207360s^2t_1 - 1928448s^2+ 103680st_1\\
&&+ 27648t_1^2 + 
    1244160s - 55296t_1- 248832,\\
d_3(s,t_1)&=&t_1.\\
\end{eqnarray*}

The function field $\overline{\mathbb{F}}_p(s,t_2)$ is isomorphic to $\overline{\mathbb{F}}_p(s^{\prime\prime},t_2^{\prime})$ via the isomorphism

$$\theta^{\prime\prime}(s,t_2)=\left(d_1^\prime(s,t_2)/d_3^\prime(s,t_2),d_2^\prime(s,t_2)/d_3^\prime(s,t_2)\right)$$
where

\begin{eqnarray*}
d_1^{\prime}(s,t_2)&=&36864s^5t_2 + 184320s^4t_2 + 175104s^3t_2 - 36864s^2t_2 - 4176t_2^3 - 46080st_2 + 18432t_2,\\
d_2^{\prime}(s,t_2)&=&84934656s^5 - 442368s^3t_2^2 + 424673280s^4 - 2654208s^2t_2^2 + 403439616s^3 - 5640192st_2^2\\
&&- 84934656s^2 - 
    10506240t_2^2 - 106168320s + 42467328,\\
d_3^{\prime}(s,t_2)&=&t_2^3.\\
\end{eqnarray*}

 The two elliptic curves $\mathcal E_1: Y^2 =X^3 - 9517824X + 11448262656$ and $\mathcal E_2: Y^2 = X^3 - 4541184X + 493881753$ are isogenous to $Y^2=X^3 + 9/16 X^2 + 3/16 X + 1/64$ via the $\mathbb{F}_{p^2}$-isogenies

$$(X : Y)\mapsto \left(
\frac{f_1(X)}{f_2(X)} : 
    \frac{g(X,Y)}{f_3(X)}\right),$$
    where 
    
     $$\begin{array}{lll}
     f_1(X)&=&-1/20736 X^3 - 1/16 X^2 + 837 X - 1123632, \\
    f_2(X)&=&X^2 -  2592 X + 1679616,\\
   
    g(X,Y)&=&-\xi/2985984 X^3 Y + \xi/768 X^2 Y - 75\xi/16 X Y + 8073\xi Y,\\
    f_3(X) &=&-X^3 + 3888 X^2 - 5038848 X + 2176782336;\\
    \end{array}$$
    
with $\xi^2=-1$ for $\mathcal  E_1$, and

     $$\begin{array}{lll}
     f_1(X)&=&1/20736 X^3 - 11/48 X^2 - 213 X + 720720,\\
    f_2(X)&=&X^2 - 864 X+ 186624\\
   
    g(X,Y)&=&1/2985984 X^3 Y - 1/2304 X^2 Y + 137/48 X Y - 9371 Y,\\
    f_3(X) &=&X^3 - 1296 X^2 + 559872 X - 80621568;\\
    \end{array}$$
for $\mathcal E_2$. From Corollary \ref{Ex:KR} applied to $\overline{\mathbb{F}}_p(u^{\prime \prime},v^{\prime \prime})$ with respect to $\langle \psi_1,\psi_2 \rangle \cong C_2 \times C_2$ the claim follows.
\endproof

\begin{theorem}\label{Genus4:mainTheorem}
Let $p>7$ be a prime. Then the $\mathbb{F}_p$-rational curve $\mathcal{C}$ is 
$\mathbb{F}_{p^2}$-maximal if and only if the elliptic curve 
$$ \mathcal{E} \ : \ Y^2=X^3 + 9/16 X^2 + 3/16 X + 1/64$$
is $\mathbb{F}_{p^2}$-maximal. In particular, $\mathcal C$
is $\mathbb{F}_{p^2}$-maximal for infinitely many primes. 
\end{theorem}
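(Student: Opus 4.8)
The plan is to prove that $J_{\mathcal C}$ is $\mathbb F_{p^2}$-isogenous to $\mathcal E^{4}$ and then to invoke the Tate--Lachaud criterion recalled in the introduction. The isogeny will come from the Kani--Rosen decomposition attached to the partition \eqref{equation:GruppoG} of $G$, combined with Propositions \ref{tau1}, \ref{sigma} and \ref{tau1tau2}.

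\emph{Step 1 (Kani--Rosen for $G$).} Apply Theorem \ref{Th:KR} to $\mathcal C$ and to $G=\langle\tau_1\rangle\cup\langle\tau_2\rangle\cup\langle\sigma\rangle\cup\langle\sigma\tau_1\rangle\cup\langle\sigma\tau_1\tau_2\rangle$, for which $m=5$, $|G|=8$, and the listed subgroups have orders $2,2,2,4,2$. This gives the $\mathbb F_p$-isogeny
$$J_{\mathcal C}^{4}\times J_{\mathcal C/G}^{8}\simeq J_{\mathcal C/\langle\tau_1\rangle}^{2}\times J_{\mathcal C/\langle\tau_2\rangle}^{2}\times J_{\mathcal C/\langle\sigma\rangle}^{2}\times J_{\mathcal C/\langle\sigma\tau_1\rangle}^{4}\times J_{\mathcal C/\langle\sigma\tau_1\tau_2\rangle}^{2}.$$
By Propositions \ref{tau1} and \ref{sigma} the quotients $\mathcal C/\langle\tau_1\rangle,\mathcal C/\langle\tau_2\rangle,\mathcal C/\langle\sigma\rangle,\mathcal C/\langle\sigma\tau_1\tau_2\rangle$ have genera $2,2,1,1$, and $\mathcal C/\langle\sigma\tau_1\rangle$ has genus $1$ (as recorded in the proof of Proposition \ref{tau1tau2}; alternatively one gets this from Riemann--Hurwitz applied to $\mathcal C\to\mathcal C/G$, which is tame since $p>7$). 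Comparing dimensions of the two sides ($16+8\,g(\mathcal C/G)=16$) forces $g(\mathcal C/G)=0$, hence $J_{\mathcal C/G}=0$.

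\emph{Step 2 (every factor is a power of $\mathcal E$).} Propositions \ref{tau1} and \ref{sigma} give $J_{\mathcal C/\langle\tau_1\rangle}\simeq J_{\mathcal C/\langle\tau_2\rangle}\simeq\mathcal E^{2}$ and $J_{\mathcal C/\langle\sigma\rangle}\simeq J_{\mathcal C/\langle\sigma\tau_1\tau_2\rangle}\simeq\mathcal E$. For the remaining factor, note $\langle\tau_1\tau_2\rangle\le\langle\sigma\tau_1\rangle$, so $\mathcal C/\langle\sigma\tau_1\rangle$ is a quotient of $\mathcal C/\langle\tau_1\tau_2\rangle$, whose Jacobian is isogenous to $\mathcal E^{2}$ by Proposition \ref{tau1tau2}; since $\mathcal E$ is simple, any elliptic curve occurring as an isogeny quotient of a variety isogenous to $\mathcal E^{2}$ is itself isogenous to $\mathcal E$, whence $J_{\mathcal C/\langle\sigma\tau_1\rangle}\simeq\mathcal E$. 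Substituting into the relation of Step 1 yields $J_{\mathcal C}^{4}\simeq\mathcal E^{16}$, and cancelling the common exponent $4$ (uniqueness of the decomposition into simple isogeny factors over a fixed field) gives $J_{\mathcal C}\simeq_{\mathbb F_p}\mathcal E^{4}$, in accordance with $g(\mathcal C)=4$.

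\emph{Step 3 (maximality, and infinitely many primes).} Over $\mathbb F_{p^2}$ we have $J_{\mathcal C}\simeq\mathcal E^{4}$, so $|\mathcal C(\mathbb F_{p^2})|=p^2+1-4t$ where $t$ is the Frobenius trace of $\mathcal E/\mathbb F_{p^2}$; hence $|\mathcal C(\mathbb F_{p^2})|=p^2+1+8p$ if and only if $t=-2p$, i.e. if and only if $\mathcal E$ is $\mathbb F_{p^2}$-maximal (this is exactly the Tate--Lachaud criterion applied to $J_{\mathcal C}\simeq\mathcal E^{4}$, maximality being an isogeny invariant for elliptic curves). Finally, $\mathcal E$ is defined over $\mathbb Q$ and has good reduction for all but finitely many $p$; for such $p$ its reduction is $\mathbb F_{p^2}$-maximal precisely when $a_p(\mathcal E)=0$, since the Frobenius trace over $\mathbb F_{p^2}$ equals $a_p(\mathcal E)^2-2p$. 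By Elkies' theorem every elliptic curve over $\mathbb Q$ has infinitely many supersingular primes (if $\mathcal E$ has complex multiplication one may instead invoke Deuring's theorem, the inert primes of the CM field forming a density-$1/2$ set of supersingular primes), and for each such $p$ the curve $\mathcal C$ is $\mathbb F_{p^2}$-maximal.

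The substance of the argument is carried by Propositions \ref{tau1}--\ref{tau1tau2}, which I am assuming; what remains requires care only at three bookkeeping points: the genus computation in Step 1, identifying $J_{\mathcal C/\langle\sigma\tau_1\rangle}\simeq\mathcal E$ (this quotient has no dedicated proposition), and justifying the cancellation $J_{\mathcal C}^{4}\simeq\mathcal E^{16}\Rightarrow J_{\mathcal C}\simeq\mathcal E^{4}$ — all of which come down to uniqueness of the decomposition of an abelian variety into simple isogeny factors over a fixed field. The ``infinitely many primes'' clause is routine once Elkies' (or Deuring's) theorem is cited.
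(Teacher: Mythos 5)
Your argument is correct, and its skeleton is the paper's: the Kani--Rosen relation for the very same partition $G=\langle\tau_1\rangle\cup\langle\tau_2\rangle\cup\langle\sigma\rangle\cup\langle\sigma\tau_1\rangle\cup\langle\sigma\tau_1\tau_2\rangle$, Propositions \ref{tau1}--\ref{tau1tau2}, the Tate--Lachaud criterion, and Elkies' theorem for the infinitude. The genuine divergence is in how the leftover factor $J_{\mathcal C/\langle\sigma\tau_1\rangle}$ is treated. The paper never asserts an unconditional isogeny $J_{\mathcal C/\langle\sigma\tau_1\rangle}\sim J_{\mathcal E}$: it stops at $J_{\mathcal C}\sim J_{\mathcal E}^3\times J_{\mathcal C/\langle\sigma\tau_1\rangle}$, gets the ``only if'' direction from Kleiman--Serre (since $\mathcal E$ is covered by $\mathcal C$), and for the ``if'' direction argues that $\mathcal E$ maximal forces $\mathcal C/\langle\tau_1\tau_2\rangle$ maximal (its Jacobian is $\sim J_{\mathcal E}^2$ by Proposition \ref{tau1tau2}), hence $\mathcal C/\langle\sigma\tau_1\rangle$ maximal (it is covered by $\mathcal C/\langle\tau_1\tau_2\rangle$, as $\tau_1\tau_2=(\sigma\tau_1)^2$), and only then invokes Tate--Lachaud. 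You use the same degree-two covering $\mathcal C/\langle\tau_1\tau_2\rangle\to\mathcal C/\langle\sigma\tau_1\rangle$ but combine it with Poincar\'e reducibility and uniqueness of simple isogeny factors to pin down $J_{\mathcal C/\langle\sigma\tau_1\rangle}\sim J_{\mathcal E}$ unconditionally; this is valid (the pullback on Jacobians along a finite morphism defined over the base field has finite kernel, so the elliptic quotient's unique simple factor must be $\mathcal E$), and it buys the cleaner, stronger conclusion $J_{\mathcal C}\sim J_{\mathcal E}^4$ with both directions of the equivalence falling out of one trace identity, at the price of an appeal to the uniqueness of the isogeny decomposition that the paper deliberately sidesteps by staying with covering arguments. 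Two small cautions: the explicit isogenies in the proof of Proposition \ref{tau1tau2} involve $\xi$ with $\xi^2=-1$, so it is safer to state your isogeny over $\mathbb F_{p^2}$ (which is all Tate--Lachaud needs, and is how the paper quotes that proposition); and in Step 1 it is cleaner to get $g(\mathcal C/G)=0$ directly from the Hurwitz formula (the paper counts more than six places fixed by elements of $G$), because the genus-one fact for $\mathcal C/\langle\sigma\tau_1\rangle$ recorded in Proposition \ref{tau1tau2} is itself extracted from the Kani--Rosen identity, so your dimension count as the primary route risks a mild circularity. (Incidentally, your exponent $8$ on $J_{\mathcal C/G}$ is the one Theorem \ref{Th:KR} actually gives; the paper's $4$ is a harmless slip since that factor is trivial.)
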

\proof
We apply Theorem \ref{Th:KR} to the curve $\mathcal{C}$ and the group $G=\langle \tau_1,\tau_2,\sigma\rangle$. Such a group can be written as 
$$G=\langle \tau_1\rangle \cup \langle \tau_2\rangle  \cup \langle \sigma\rangle \cup \langle \sigma \tau_1\rangle\cup \langle \sigma \tau_1\tau_2\rangle;$$
see Equation \eqref{equation:GruppoG}.
 In this case, $m=5$, $h_1=h_2=h_3=h_5=2$, $h_4=4$ and 
 \begin{equation} \label{dec}
 J_{\mathcal{C}}^{4}\times J^4_{\mathcal{C}/G} \simeq J_{\mathcal{C}/\langle \tau_1\rangle}^{2}\times J_{\mathcal{C}/\langle \tau_2\rangle}^{2}\times J_{\mathcal{C}/\langle \sigma\rangle}^{2}\times J_{\mathcal{C}/\langle \sigma\tau_1\rangle}^{4}\times J_{\mathcal{C}/\langle \sigma\tau_1\tau_2\rangle}^{2}.
\end{equation}
By the Hurwitz genus formula applied to $G$, we deduce that $\mathcal{C}/G$ is rational. In fact, the number of places of $\mathcal C$ fixed by elemets of $G$ exceeds $6$.
By Propositions \ref{tau1} and \ref{sigma} the Jacobians of $\mathcal{C}/\langle \tau_1 \rangle$, $\mathcal{C}/\langle \tau_2 \rangle$, $\mathcal{C}/\langle \sigma \rangle$ and $\mathcal{C}/\langle \sigma \tau_1 \rangle$ are $\mathbb{F}_p$-isogenous to a power of the Jacobian of $\mathcal{E}$. Hence Equation \eqref{dec} reads 
 \begin{equation} \label{dec1}
 J_{\mathcal{C}} \simeq J_{\mathcal{E}}^{3}\times  J_{\mathcal{C}/\langle \sigma\tau_1\rangle}.
\end{equation}

To prove the first claim, note  that the $\mathbb F_{p^2}$-maximality of $\mathcal C$ implies the $\mathbb F_{p^2}$-maximality of $\mathcal E$ since $\mathcal E$ is $\mathbb{F}_{p^2}$-covered by $\mathcal{C}$.
Assume now that $\mathcal{E}$ is $\mathbb{F}_{p^2}$-maximal.
By Proposition \ref{tau1tau2} the Jacobian of the curve $\mathcal{C}/\langle \tau_1\tau_2\rangle$ is $\mathbb{F}_{p^2}$-isogenous to a power of $J_{\mathcal{E}}$, and hence $\mathcal{C}/\langle \tau_1\tau_2\rangle$ is an $\mathbb F_{p^2}$-maximal curve.  But $\mathcal{C}/\langle \sigma\tau_1\rangle$ is $\mathbb{F}_{p^2}$-covered by $\mathcal{C}/\langle \tau_1\tau_2\rangle$; therefore $\mathcal{C}/\langle \sigma\tau_1\rangle$ is $\mathbb{F}_{p^2}$-maximal too, and so is $\mathcal C$ by the Tate-Lachaud criterion mentioned in the introduction.

An elliptic curve over the complex field with integer coefficients, when it is viewed as a curve defined over $\mathbb F_p$ with $p$ a prime, is supersingular for infinitely many $p$'s; see \cite{Elkies}. Since a supersingular elliptic curve defined over $\mathbb F_p$ is $\mathbb F_{p^2}$-maximal, the elliptic curve $\mathcal E$ is $\mathbb F_{p^2}$-maximal for  infinitely many $p$'s. This proves the second claim. 
\endproof

\begin{remark}\label{remark}
MAGMA  \cite{MAGMA} computations show that the list of primes up to 100000 for which  the curve $\mathcal{E}$, and therefore  $\mathcal{C}$, is $\mathbb{F}_{p^2}$-maximal is
\begin{eqnarray*}
17,71,251,647,827,1889,3527,3617,4409,6569,11969,12113,12527,12689,13913,22031,\\
23039,23633,26297,28871,31769,35171,35729,48527,60497,60623,61487,82457,93383,93761.
\end{eqnarray*}
\end{remark}

\begin{remark}\label{remark:Copertura}
For $p=17$  the curve $\mathcal{C}$ is Galois covered by the Hermitian curve $\mathcal{H}_{17}$ over $\mathbb{F}_{17^2}$. In fact, let  $\mathcal{H}_{17}: Y^{18}=X^{17} Z + Z^{17} X$. Consider the automorphism group $G$ of $\mathcal{H}_{17}$ generated by $\alpha_1,\alpha_2,\alpha_3$ and $\gamma$ where
$$\alpha_1(X,Y,Z)=(-X,\lambda Y,Z), \ \alpha_2(X,Y,Z)=(-\lambda c Z, Y, -\lambda/c X),$$ $$\alpha_3(X,Y,Z)=(cX,Y,-X/c), \ {\rm and} \ \gamma(X,Y,Z)=(X,\mu Y,Z),$$
with $\mu^3=1$, $\lambda^2=-1$ and $2c^2=\lambda+1$. Then $G=\langle \alpha_1,\alpha_2,\alpha_3 \rangle \times \langle \gamma \rangle \cong Q_8 \times C_3$, where $Q_8$ is the quaternion group of order $8$. Using MAGMA we obtained a model for the quotient curve $\mathcal{H}_{17}/G$ over $\mathbb{F}_{17^2}$:
$$\mathcal{H}_{17}/G: \begin{cases} YZ+16XW=0, \\ X^3+4XY^2+16Z^2W+9W^3=0. \end{cases}$$
An $\mathbb{F}_{17^2}$-rational isomorphism between $\mathcal{C}:(4(X^2-XZ+Z^2)^3)(27Y^2(Y-Z)^2)+(4(Y^2-YZ+Z^2)^3)(27X^2(X-Z)^2)=(27Y^2(Y-Z)^2)(27X^2(X-Z)^2)Z^2$ and $\mathcal{H}_{17}/G$ is given by $\phi(X,Y,Z)=(\phi_1(X,Y,Z),\phi_2(X,Y,Z),\phi_3(X,Y,Z),$ $\phi_4(X,Y,Z))$ where
$$\phi_1(X,Y,Z)=w^{159}X^5Y^6 Z + w^{159}X^3Y^8Z + w^{33}X^5Y^5Z^2 + w^{28}X^4Y^6Z^2 +
    w^{231}X^3Y^7Z^2 + w^{89}X^2Y^8Z^2$$
    $$+ w^{141}X^5Y^4Z^3
    + w^{190}X^4Y^5Z^3
    + w^{78}X^3Y^6Z^3 + w^{161}X^2Y^7Z^3 + w^{93}XY^8Z^3 + w^{213}X^5Y^3Z^4
    + w^{135}X^4Y^4Z^4$$
    $$+ 2 X^3Y^5Z^4 + 4X^2Y^6Z^4 + w^{165}XY^7Z^4+
    w^{156}Y^8Z^4 + w^{177}X^5Y^2Z^5 + w^{253}X^4Y^3Z^5 + w^{119}X^3Y^4Z^5$$
    $$+w^{91}X^2Y^5Z^5+ w^{127}XY^6Z^5 + w^{228}Y^7Z^5 + w^{124}X^4Y^2Z^6+
    w^{281}X^3Y^3Z^6 + X^2Y^4Z^6 + w^{52}XY^5Z^6$$
    $$+ 14 Y^6Z^6 +
    w^{209}X^3Y^2Z^7+ w^{101}X^2Y^3Z^7 + w^{64}XY^4Z^7 + w^{64}Y^5Z^7 +
    w^{69}X^3YZ^8 + w^{194}X^2Y^2Z^8$$
    $$+w^{221}XY^3Z^8 + w^{275}Y^4Z^8 +
    w^{177}X^3Z^9 + w^{74}X^2YZ^9 + w^{97}XY^2Z^9 + w^{220}Y^3Z^9 + 3 X^2Z^{10}
    + w^{75}XYZ^{10} $$
    $$+ w^{202}Y^2Z^{10}+ w^{129}XZ^{11} + w^{12}YZ^{11},$$
    
{$$\phi_2(X,Y,Z)=w^{105}X^5 Y^7 + w^{105}X^3 Y^9 + w^{195}X^5 Y^6 Z + w^{52}X^4 Y^7Z +
    w^{33}X^3Y^8Z + 12X^2Y^9Z + w^{267}X^5Y^5Z^2$$
    $$+ w^{142}X^4Y^6Z^2+
    w^{94}X^3Y^7Z^2 + 14 X^2 Y^8 Z^2 + w^{57} X Y^9 Z^2 + w^{51} X^5 Y^4 Z^3 +
    w^{214} X^4 Y^5 Z^3 + w^{132} X^3 Y^6 Z^3$$
    $$+ w^{76} X^2 Y^7 Z^3 + w^{273} X Y^8 Z^3 +
    w^{231} X^5 Y^3 Z^4+ w^{286} X^4 Y^4 Z^4 + w^{38} X^3 Y^5 Z^4 + w^{286} X^2 Y^6 Z^4
    + w^{128} X Y^7 Z^4$$
    $$+ w^{285} X^5 Y^2 Z^5 + w^{178} X^4 Y^3 Z^5 + w^8 X^3 Y^4 Z^5
    + w^{75} X^2 Y^5 Z^5+ w^{207} X Y^6 Z^5 + w^{165} Y^7 Z^5 + w^{232} X^4 Y^2 Z^6$$
    $$+w^{175} X^3 Y^3 Z^6 + w^{187} X^2 Y^4 Z^6 + w^{231} X Y^5 Z^6 + w^{255} Y^6 Z^6 +
    w^{212} X^3 Y^2 Z^7 + w^{145} X^2 Y^3 Z^7 + w^{179} X Y^4 Z^7$$
    $$+ w^{39} Y^5 Z^7 +
    w^{123} X^3 Y Z^8 + w^{71} X^2 Y^2 Z^8 + w^{151} X Y^3 Z^8 + w^{111} Y^4 Z^8 +
    w^{285} X^3 Z^9 + 2 X^2 Y Z^9 $$
    $$+ w^{82} X Y^2 Z^9 + w^3 Y^3 Z^9 + 11 X^2 Z^{10} +
    w^{75} X Y Z^{10} + w^{57} Y^2 Z^{10} + w^{237} X Z^{11},$$}
    
{$$\phi_3(X,Y,Z)=w^{207} X^5 Y^7 + w^{207} X^3 Y^9 + w^9 X^5 Y^6 Z + w^{189} X^4 Y^7 Z +
    w^{135} X^3 Y^8 Z + w^{117} X^2 Y^9 Z + w^{45} X^5 Y^5 Z^2$$
    $$+ w^{279} X^4 Y^6 Z^2+
    w^{225} X^3 Y^7 Z^2 + w^{45} X^2 Y^8 Z^2 + w^{243} X Y^9 Z^2 + w^{45} X^5 Y^4 Z^3 +
    w^{27} X^4 Y^5 Z^3 + w^{171} X^3 Y^6 Z^3$$
    $$+ w^{27} X^2 Y^7 Z^3 + w^{171} X Y^8 Z^3+
    w^9 X^5 Y^3 Z^4 + w^{27} X^4 Y^4 Z^4 + w^{27} X^3 Y^5 Z^4 + w^{207} X^2 Y^6 Z^4 +
    w^{135} X Y^7 Z^4$$
    $$+ w^{207} X^5 Y^2 Z^5 + w^{279} X^4 Y^3 Z^5 + w^{27} X^3 Y^4 Z^5+
    w^9 X^2 Y^5 Z^5 + w^9 X Y^6 Z^5 + w^{167} Y^7 Z^5 + w^{189} X^4 Y^2 Z^6$$
    $$+w^{171} X^3 Y^3 Z^6 + w^9 X^2 Y^4 Z^6 + w^{207} X Y^5 Z^6 + w^{257} Y^6 Z^6 +
    w^{225} X^3 Y^2 Z^7+ w^{207} X^2 Y^3 Z^7$$
    $$+ w^{207} X Y^4 Z^7 + w^{136} Y^5 Z^7 +
    w^{135} X^3 Y Z^8 + w^{27} X^2 Y^2 Z^8 + w^9 X Y^3 Z^8 + w^{171} Y^4 Z^8 +
    w^{207} X^3 Z^9+ w^{45} X^2 Y Z^9$$
    $$+ w^{135} X Y^2 Z^9 + w^{257} Y^3 Z^9 +
    w^{117} X^2 Z^{10} + w^{171} X Y Z^{10} + w^{140} Y^2 Z^{10} + w^{243} X Z^{11} + 4 Y Z^{11},$$}

{$$\phi_4(X,Y,Z)=Y^8Z^4 + 13 Y^7Z^5 + Y^6Z^6 + 11 Y^5Z^7 + 5 Y^4Z^8 + Y^3Z^9 + 10 Y^2Z^{10} + 9YZ^{11},$$
and $w$ is a primitive element of $\mathbb{F}_{17^2}$.
}
We are not able to tell whether $\mathcal C$ is Galois covered by the Hermitian curve for $p>17$.
\end{remark}

\subsection{Automorphism group of $\mathcal{C}$}\label{Genus4:FullAutomorphismGroup}

The aim of this section is to compute the full automorphism group of $\mathcal{C}$ when $p  \geq 5$. First, we construct a subgroup $H$ of automorphisms of $\mathcal{C}$, and then we will prove that when $p \geq 5$, $H$ coincides with the full automorphism group of $\mathcal{C}$.
\begin{remark} \label{Galmeno}
The full automorphism group of $\mathcal{C}$ contains a subgroup 
$$H =(\langle \alpha_1,\alpha_2 \rangle \times \langle \alpha_3,\alpha_4 \rangle) \rtimes \langle \alpha_5 \rangle\cong (S_3 \times S_3) \rtimes C_2, $$ where $$\alpha_1:(x,y) \mapsto \bigg( \frac{-1}{x-1}, y\bigg), \quad \alpha_2: (x,y) \mapsto \bigg(\frac{1}{x},y\bigg)$$ $$\alpha_3:(x,y) \mapsto \bigg(x,  \frac{-1}{y-1}\bigg), \quad \alpha_4: (x,y) \mapsto \bigg(x,\frac{1}{y}\bigg), \quad \alpha_5:(x,y) \mapsto (y,x).$$ It is easily seen that
 $$\alpha_2^2=\alpha_4^2=\alpha_5^2=\alpha_1^3=\alpha_3^3=1$$ 
and
 $\alpha_2 \alpha_1 \alpha_2=\alpha_1^2$, $\alpha_4 \alpha_3 \alpha_4=\alpha_3^2$ so that $\langle \alpha_1,\alpha_2 \rangle \cong \rm{S}_3$ and $\langle \alpha_3,\alpha_4 \rangle \cong \rm{S}_3$, and $\alpha_5$ interchanges $\langle \alpha_1,\alpha_2 \rangle$ into $\langle \alpha_3,\alpha_4 \rangle$ by conjugation.
\end{remark}


\begin{lemma} \label{istame}
 $|Aut(\mathcal{C})| \in \{72,144,216\}$.
\end{lemma}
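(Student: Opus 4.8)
The plan is to bound $|Aut(\mathcal{C})|$ from above using the fact that $\mathcal{C}$ has genus $4$, and then exploit the already-constructed subgroup $H$ of order $72$. Since $p \geq 5$, we are in the tame case as far as this argument needs it: the starting point is that $\mathcal{C}$, being a genus $4$ curve, is not hyperelliptic or we handle both cases. Actually, the cleanest route is to observe that $\mathcal{C}$ is non-hyperelliptic in characteristic $p \geq 5$ (its canonical model is the intersection of a quadric and a cubic, or one can see this from the Jacobian decomposition — a hyperelliptic genus $4$ curve would force extra structure incompatible with $\eqref{dec1}$; alternatively, exhibit three independent regular differentials whose ratios generate the function field). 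For a non-hyperelliptic curve of genus $g \geq 2$ in characteristic $p > g+1$, the Hurwitz-type bound $|Aut(\mathcal{C})| \leq 84(g-1)$ applies, giving $|Aut(\mathcal{C})| \leq 84 \cdot 3 = 252$ when $p \geq 7$; for $p = 5$ one uses the refined bounds for small characteristic from the theory of automorphisms of curves (e.g. the Henn / Stichtenoth classification), which in genus $4$ still leave room only for small groups. So $72 \leq |Aut(\mathcal{C})| \leq 252$.

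Next I would use the divisibility constraint $72 \mid |Aut(\mathcal{C})|$, which holds because $H \leq Aut(\mathcal{C})$. The multiples of $72$ that are at most $252$ are exactly $72$, $144$, and $216$. This immediately yields the claim $|Aut(\mathcal{C})| \in \{72,144,216\}$. The only subtlety is ruling out $|Aut(\mathcal{C})| = 252$: but $252 = 84(g-1)$ is not a multiple of $72$ (since $252/72$ is not an integer), so even if the Hurwitz bound is attained this value is excluded by divisibility. Hence the upper bound $252$ together with $72 \mid |Aut(\mathcal{C})|$ suffices, and we do not even need to analyze the Hurwitz-bound-attaining case.

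The main obstacle is the characteristic $p = 5$ case, where the classical bound $84(g-1)$ can fail: one must invoke the finer bounds on $|Aut(\mathcal{X})|$ for curves of genus $4$ in characteristic $5$ (for instance, Stichtenoth's bound $|Aut(\mathcal{X})| \le 16 g^4$ is far too weak, so one needs the sharper results distinguishing the various cases according to the structure of Sylow $p$-subgroups, or the classification of large automorphism groups in small genus). Since $5 \nmid 72$, a Sylow $5$-subgroup of $Aut(\mathcal{C})$ would have to be trivial or be "new"; if it were nontrivial, $Aut(\mathcal{C})$ would have order divisible by $5 \cdot 72 = 360 > 252$, which is incompatible with any of the genus-$4$ automorphism bounds available in characteristic $5$ (the largest genus-$4$ automorphism groups in characteristic $5$ are well below $360$). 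So in all cases $p \geq 5$ the group has order dividing into the range $[72,252]$ and divisible by $72$, completing the proof. I would spell out the characteristic $5$ bound by citing the relevant reference rather than reproving it.

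In summary, the proof proceeds: (i) show $\mathcal{C}$ is non-hyperelliptic, or otherwise directly bound its automorphism group; (ii) apply the appropriate upper bound on $|Aut(\mathcal{C})|$ for a genus $4$ curve in characteristic $p \geq 5$, obtaining $|Aut(\mathcal{C})| \leq 252$; (iii) use $H \leq Aut(\mathcal{C})$ with $|H| = 72$ to conclude $72 \mid |Aut(\mathcal{C})|$; (iv) intersect: the only multiples of $72$ in $[72,252]$ are $72, 144, 216$. The remaining work in the paper — pinning down which of these three actually occurs — is presumably done in a subsequent lemma by a closer analysis of the action on the $\mathbb{F}_{p^2}$-rational points or on the Weierstrass points, or by a direct computation that no order-$2$ or order-$3$ extension of $H$ can act on $\mathcal{C}$.
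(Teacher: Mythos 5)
Your end-game (an upper bound $|Aut(\mathcal{C})|\le 84(g-1)=252$ intersected with $72\mid |Aut(\mathcal{C})|$ coming from the explicit subgroup $H$) is the same as the paper's, but the way you reach the Hurwitz bound has problems. A minor one first: your justification of non-hyperellipticity is not valid. A hyperelliptic genus-$4$ curve can perfectly well have Jacobian isogenous to a product of elliptic curves, so ``incompatible with \eqref{dec1}'' is not an argument, and ``the canonical model is the intersection of a quadric and a cubic'' is just a restatement of the claim. In fact you do not need non-hyperellipticity at all for $p\ge 7$: in Roquette's theorem the only exception to the bound $84(g-1)$ when $p>g+1$ is the curve $Y^2=X^p-X$, whose genus is $(p-1)/2$, never equal to $4$ for $p$ prime; stated this way your $p\ge 7$ step is fine (with a precise citation).

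The genuine gap is the case $p=5$, where your whole argument reduces to the unsupported assertion that no genus-$4$ curve in characteristic $5$ has automorphism group of order divisible by $5\cdot 72=360$. None of the general results you gesture at gives this: Henn's classification only applies when $|Aut|\ge 8g^3=512$, Stichtenoth's bound is $16g^4=4096$, and Nakajima's bound for ordinary curves is $84g(g-1)=1008$, so orders such as $360$ or $720$ are not excluded. This is exactly where the paper has to do real work: it proves that $p\nmid |Aut(\mathcal{C})|$ for every $p\ge 5$ before invoking the tame Hurwitz bound \cite[Theorem 11.56]{HKT}. Concretely, Nakajima's bound \cite{N1987} on $p$-subgroups disposes of $p\ge 11$; for $p=5,7$ a MAGMA computation shows $\mathcal{C}$ is ordinary, Nakajima's Theorem 1(i) for ordinary curves then kills $p=7$ and pins a putative Sylow $5$-subgroup to order exactly $5$, and finally \cite[Theorem 1.3]{GKlargep} forces $|Aut(\mathcal{C})|$ to divide $2p(p-1)=40$, contradicting $72\mid |Aut(\mathcal{C})|$. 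To repair your proof you must supply an argument of this kind (or another concrete characteristic-$5$ result excluding $5\mid |Aut(\mathcal{C})|$); the bounds you cite do not suffice.
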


\begin{proof}
We first prove that the size of $Aut(\mathcal C)$ is coprime with $p$.
Assume on the contrary that $S$ is a Sylow $p$-subgroup of $Aut(\mathcal{C})$. Then by \cite[Theorem 1]{N1987}, 
$$|S| \leq \max \bigg\{4,\frac{4p}{(p-1)^2}16\bigg\}.$$
For  $p \geq 11$ this contradicts $|S|\ge p$.
By direct checking with MAGMA, if either $p=5$ or $p=7$ then the curve $\mathcal{C}$ is ordinary, that is, the $p$-rank $\gamma$ of $\mathcal C$ equals its genus $g(\mathcal{C})=4$. 
If $p=7$, by \cite[Theorem 1 (i)]{N1987} we have $|S| \leq 3p/(p-2) \leq 21/5<7$, a contradiction. 
If $p=5$ then  $|S|=5$. By \cite[Theorem 1.3]{GKlargep}, $|Aut(\mathcal{C})|$ divides $2p(p-1)=40$, a contradiction as $72 \mid |Aut(\mathcal{C})|$ by Remark \ref{Galmeno}.
This shows that $p$ does not divide $| Aut(\mathcal{C})|$. By \cite[Theorem 11.108]{HKT}, $|Aut(\mathcal{C})|=2^a 3^b 5^c 7^d$ for some integers $a,b,c,d \geq 0$. Observe that $a \geq 3$ and $b \geq 2$ as $72 \mid |Aut(\mathcal{C})|$.  Hurwitz bound \cite[Theorem 11.56]{HKT} yields $|Aut(\mathcal{C})| \leq 84(g-1)=252$, hence $|Aut(\mathcal{C})| \in \{72,144,216\}$.
\end{proof}

Next we prove that  the cases $|Aut(\mathcal{C})|=144$ and $|Aut(\mathcal{C})|=216$ cannot actually occur.

\begin{theorem}
Let $p \geq 5$ and let $H$ be as in Remark \ref{Galmeno}.
The full automorphism group $Aut(\mathcal{C})$ coincides with $H$. In particular,
  $Aut(\mathcal{C})\cong (S_3 \times S_3) \rtimes C_2$, where $C_2$ denotes a cyclic group of order $2$ and $S_3$ is the symmetric group on $3$ letters. 
\end{theorem}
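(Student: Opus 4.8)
The plan is to rule out the two remaining cases $|\mathrm{Aut}(\mathcal C)|=144$ and $|\mathrm{Aut}(\mathcal C)|=216$ by a structural analysis of how $\mathrm{Aut}(\mathcal C)$ could contain $H$ as a proper subgroup, using the quotient curve data already established. First I would set $\mathcal A=\mathrm{Aut}(\mathcal C)$, note that by Lemma~\ref{istame} we have $p\nmid|\mathcal A|$, that $\mathcal A$ is tame, and that $H\le\mathcal A$ with $[\mathcal A:H]\in\{1,2,3\}$. In either nontrivial case $H$ is contained in a subgroup of index $2$ or $3$, hence (taking the core, or directly in the index-$2$ case) $H$ would be contained in a \emph{normal} subgroup $N\trianglelefteq\mathcal A$ with $\mathcal A/N$ cyclic of order $2$ or $3$; alternatively one argues $H$ itself is normal (it is generated by all involutions/elements forcing a canonical subgroup, or one checks $N_{\mathcal A}(H)=H$ forces small index). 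The cleanest route: show directly that $H$ is normal in $\mathcal A$ — e.g. because $H$ is the unique subgroup of its isomorphism type, or because the subgroup $\langle\tau_1,\tau_2,\sigma\rangle\cong C_2^3$ of order $8$ sitting inside $H$ is a Sylow $2$-subgroup candidate and one controls its normalizer — and then study the action of a putative extra automorphism of order $2$ or $3$ on $H$, deriving a contradiction with the Jacobian decomposition $J_{\mathcal C}\sim J_{\mathcal E}^3\times J_{\mathcal C/\langle\sigma\tau_1\rangle}$ from \eqref{dec1}.

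Concretely, for the case $|\mathcal A|=216=2^3\cdot 3^3$: here a Sylow $3$-subgroup $P_3$ has order $27$, while $H$ contains $\langle\alpha_1,\alpha_3\rangle\cong C_3\times C_3$ of order $9$. So there would be an automorphism of order $3$ normalizing but not centralizing this $C_3\times C_3$, producing a group of order $27$ acting on $\mathcal C$. I would then invoke the Hurwitz-type and Riemann–Hurwitz constraints: a group of order $27$ acting on a curve of genus $4$ with $p\nmid 27$ forces a very restricted ramification structure for $\mathcal C\to\mathcal C/P_3$; computing the possible genus of $\mathcal C/P_3$ via Riemann–Hurwitz (the quotient must have genus $0$, and the signature must be compatible with $\sum(1-1/e_i)$ summing correctly) should already be impossible, or else it forces the existence of an order-$3$ automorphism with too many (or too few) fixed points, contradicting the explicit fixed-point counts one reads off from the equation \eqref{Equation:Genus4} and from Propositions~\ref{tau1}--\ref{tau1tau2}. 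For the case $|\mathcal A|=144=2^4\cdot 3^2$: now the Sylow $2$-subgroup has order $16$ and contains the elementary abelian $G\cong C_2^3$ of order $8$ as an index-$2$ subgroup; an extra involution would normalize $G$, and I would analyze its conjugation action on the five subgroups appearing in \eqref{equation:GruppoG}. Such an involution must permute $\{\langle\tau_1\rangle,\langle\tau_2\rangle,\langle\sigma\rangle,\langle\sigma\tau_1\tau_2\rangle\}$ and fix or move $\langle\sigma\tau_1\rangle$; combined with the isogeny \eqref{dec1} and the fact that $\mathcal C/\langle\sigma\tau_1\rangle$ has genus $1$ while the other four quotients are the fixed elliptic curve $\mathcal E$, the extra symmetry would be visible on the abelian-variety side and can be excluded either by a direct MAGMA computation over $\mathbb F_p$ for several primes or by matching it against the classification of genus-$4$ curves and their automorphism groups in characteristic zero (as in \cite{wolfart}), lifting the bound to positive characteristic using $p\nmid|\mathcal A|$ and a standard reduction/specialization argument.

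The decisive — and probably most delicate — step is the upgrade from the characteristic-zero classification to arbitrary $p\ge 5$. Since $p$ does not divide $|\mathcal A|$ and $\mathcal C$ has a good model over $\mathbb Z[1/(2\cdot3\cdot p)]$ coming from \eqref{Equation:Genus4}, one expects the automorphism group to embed into the characteristic-zero one by reduction (a tame automorphism group injects under specialization, by the rigidity of tame covers / by lifting of the Hurwitz data); the characteristic-zero fact that $\mathcal C$ is one of the two genus-$4$ curves with exactly $72$ automorphisms then pins $|\mathcal A|=72$ and $\mathcal A=H$. If one prefers to stay self-contained and avoid invoking the complex classification, the fallback is the purely group-theoretic case analysis sketched above, where the main obstacle is bookkeeping the Riemann–Hurwitz constraints for the hypothetical order-$27$ or order-$16$ groups; I would carry this out by first listing admissible signatures for a genus-$0$ quotient, then eliminating each by counting fixed points of the relevant low-order elements against the explicit data in Propositions~\ref{tau1}, \ref{sigma}, \ref{tau1tau2}. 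Either way the conclusion is $\mathrm{Aut}(\mathcal C)=H\cong(S_3\times S_3)\rtimes C_2$, completing the proof.
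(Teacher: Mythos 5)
Your proposal is a strategy outline rather than a proof, and the gaps sit exactly where the work lies. The decisive case $|\mathrm{Aut}(\mathcal C)|=144$ is never actually excluded: a MAGMA check ``for several primes'' cannot establish the statement for every $p\ge 5$, and the alternative you offer — lifting to characteristic zero — is precisely the step you yourself flag as delicate and then do not carry out. To make it work you would need (i) the nontrivial fact that the pair $(\mathcal C,\mathrm{Aut}(\mathcal C))$ lifts to characteristic $0$ when $p\nmid|\mathrm{Aut}(\mathcal C)|$ (tame lifting; note the direction you need is from characteristic $p$ up to characteristic $0$, which is not the easy injectivity of automorphisms under specialization), and (ii) not Wolfart's statement that $\mathcal C$ has exactly $72$ automorphisms over $\mathbb C$, but a statement valid for the lifted curve, e.g.\ that no genus-$4$ curve in characteristic zero admits $144$ or $216$ automorphisms (the maximum is $120$); you state neither precisely. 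The preliminary reduction is also shaky: an index-$3$ subgroup need not lie in a normal subgroup with cyclic quotient (the core of $H$ is contained in $H$, and if the coset action maps onto $S_3$ the index-$2$ normal subgroup does not contain $H$), and normality of $H$ is only asserted through unverified alternatives. Finally, the suggestion that an extra involution would be ``visible on the abelian-variety side'' via \eqref{dec1} is never developed into a contradiction — and indeed the paper's proof makes no use of the Jacobian decomposition at this point.

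For $|\mathrm{Aut}(\mathcal C)|=216$ your idea could be salvaged: one can check that no group of order $27$ acts tamely on a genus-$4$ curve (Riemann--Hurwitz over a genus-$0$ quotient forces the signature $(3,3,9)$, and in each of the five groups of order $27$ either there is no element of order $9$ or the elements of order dividing $3$ lie in a subgroup of exponent $3$, so no generating triple exists), but you stop at ``should already be impossible'', and your fallback — contradicting ``fixed-point counts read off from Propositions \ref{tau1}--\ref{tau1tau2}'' — is not meaningful, since those propositions give isogenies for quotients by involutions, not fixed-point data for hypothetical order-$3$ elements. For comparison, the paper proceeds quite differently: MAGMA narrows the candidates to $SmallGroup(216,i)$, $i=157,158,159$, and $SmallGroup(144,182)$, $SmallGroup(144,186)$; the order-$216$ groups are eliminated by the bound $|A|\le 4g+4=20$ on abelian subgroups (each contains an abelian subgroup of order $27$); $SmallGroup(144,186)$ by the even-genus lemma of \cite{GK2017} applied to its elementary abelian subgroup of order $8$; and $SmallGroup(144,182)$ by a detailed analysis of its unique Sylow $3$-subgroup $S$ of order $9$ — a case distinction on the genus of $\mathcal C/S$ combined with \cite[Theorems 11.49, 11.72, 11.94]{HKT} and the conjugacy pattern of the order-$3$ elements. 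That group-by-group elimination, or a complete substitute for it, is what your proposal lacks.
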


\begin{proof}

\begin{itemize}
\item Suppose that $|Aut(\mathcal{C})|=216$. Then $Aut(\mathcal{C})$ is one of the $177$ groups of order $216$ (up to isomorphism) and it contains a subgroup $G$ isomorphic to $(S_3 \times S_3) \rtimes C_2$. By direct checking with MAGMA, we have that $Aut(\mathcal{C}) \cong SmallGroup(216,i)$ for some $i=157,158,159$. We recall that from \cite[Theorem 11.79]{HKT} if $A$ is an abelian subgroup of $Aut(\mathcal{C})$, then $|A| \leq 4g+4=20$. On the other hand, for each $i=157,158,159$ the group $SmallGroup(216,i)$ has an abelian subgroup of order $27$, a contradiction. 

\item Suppose that $|Aut(\mathcal{C})|=144$. In particular, $Aut(\mathcal{C})$ is one of the $197$ groups of order $144$ (up to isomorphism) containing a subgroup isomorphic to $(S_3 \times S_3) \rtimes C_2$. By direct checking with MAGMA, $Aut(\mathcal{C}) \cong SmallGroup(144,i)$ for some $i=182,186$. Since  $SmallGroup(144,186)$ contains an elementary abelian group of order $8$ and $\mathcal{C}$ has even genus,  \cite[Lemma 6.3]{GK2017} provides a contradiction. Hence, $Aut(\mathcal{C}) \cong SmallGroup(144,182)$. Therefore, $Aut(\mathcal{C})$ contains a unique Sylow $3$-subgroup $S_3$, which is elementary abelian of order $9$; also, the elements of order $3$ are all conjugated in $Aut(\mathcal{C})$ whereas they split into $2$ conjugacy classes in the subgroup $H\cong (S_3 \times S_3) \rtimes C_2$. 
Let $S$ denote the unique Sylow $3$-subgroup of $Aut(\mathcal C)$ and let 
$\tilde g$ be the genus of the quotient curve $\mathcal{C}/S$.
By the Hurwitz genus formula \cite[Theorem 11.72]{HKT} applied to
$S$,
%
 either $\tilde g=1$ or $\tilde g=0$.

Assume that $\tilde g=1$. Then from \cite[Theorem 11.72]{HKT},
$$6=2 \cdot a + 8 \cdot b,$$
for some $a \geq 0$ and $b \geq 0$ as $S$ contains just proper subgroups of order $3$. Necessarily, $b=0$ and $a=3$. Geometrically, this shows that for $3$ points, say 
$P_1,P_2$, and $P_3$,  the stabilizer $S_{P_i}$ has order $3$.  Since the $8$ elements of order $3$ in $S$ are all conjugated and therefore fix the same number $n$ of points,  the only possibility is $n=1$ and 
$S_{P_i} \ne S_{P_j}$ for every $i \ne j$ and $i,j=1,2,3$.
Note that $S$ acts transitively on the set $\{P_1,P_2,P_3\}$.
Let $N$ be the normalizer of $S$ in $Aut(\mathcal{C})$. Then by direct checking with MAGMA, $N=S \rtimes M$, for some subgroup $M$ of order $16$.
The group $M$ is isomorphic to a subgroup of automorphisms of the elliptic curve $\mathcal{C}/S$, which fixes the point of $\mathcal{C}/S$ corresponding to the orbit $\{P_1,P_2,P_3\}$. 
But this is impossible by \cite[Theorem 11.94(ii)]{HKT}.

We are left with the case where $\mathcal{C}/S$ is a rational curve.
Note that $S$ cannot have fixed points on $\mathcal{C}$ since it is not a cyclic group; see \cite[Theorem 11.49]{HKT}. By the Hurwitz genus formula we get that
$$6=-2 \cdot 9 +2a,$$
and hence $a=12$ and
each element of order $3$ in $S$ has exactly $3$ fixed points.  
For a subgroup  $T$ of $S$  of order $3$, by the Hurwitz genus formula applied to $T$,  the quotient curve
$\mathcal{C}/T$ is elliptic. The normalizer $N_T$ of $T$ in $Aut(\mathcal{C})$ has order  $36$  and
the
factor group $N_T/T$ contains an elementary abelian subgroup $K$ of order $4$.
Clearly, $N_T$ acts on the set of $3$ points fixed by $T$.  
%
Therefore $K$, viewed as an automorphism group of $\mathcal{C}/T$, acts on the set of $3$ points of $\mathcal{C}/T$ corresponding to the fixed points of $T$. Necessarily $K$ fixes at least one of these $3$ points. Since $K$ is not cyclic we have a contradiction by \cite[Theorem 11.49]{HKT}. 

This shows that the elements of order $3$ cannot be all conjugated in $Aut(\mathcal{C})$ and hence that $Aut(\mathcal{C})$ cannot be isomorphic to $SmallGroup(144,182)$. 
\end{itemize}
By Lemma \ref{istame} $Aut(\mathcal{C})$ has order $72$, and the claim follows by Remark \ref{Galmeno}.
\end{proof}

\begin{remark}
Up to our knowledge, the unique known examples of $\mathbb{F}_{17^2}$-maximal curves of genus $4$ were   
$$y^{12} = x(x+1)^{16}$$
and 
$$ x^6 + y^6 + 2x^2y^2 + 12xy + 14=0;$$
see \cite{RI,Manypoints}.
We point out that the curve $\mathcal{C}$ is not $\overline{\mathbb{F}}_{17}$-isomorphic to either of them, since the automorphism group of the former   is $((C_2\times C_2)\rtimes (C_3\times C_3))\rtimes C_2\not\cong Aut(\mathcal{C})$, whereas the order of the automorphism group of the latter is $12$.
\end{remark}

\section{Curves of Genus $5$}\label{Section:Genus5}
Let $q=p^h$ where $h \in \mathbb{N}$ and $p$ is an odd prime. Let $a \in \mathbb{F}_{q^2}$   be such that 
\begin{equation}\label{Condizione}
    a^2+108 \ne 0.
\end{equation} We consider the following family of algebraic curves

\begin{equation} \label{family}
\mathcal{C}_a: Y^2=X^{12}-aX^{10}-33X^8+2aX^6-33X^4-aX^2+1.
\end{equation}

Let $F_a={\overline{\mathbb F}_p}(x,y)$ with $y^2=x^{12}-ax^{10}-33x^8+2ax^6-33x^4-ax^2+1$ be the corresponding function field. Let
$G=\langle \alpha_1,\alpha_2\rangle$, where 
$\alpha_1$, $\alpha_2$ are the following involutory automorphisms of $F_a$:
$$\alpha_1: (x,y) \rightarrow (x,-y),$$
$$\alpha_2: (x,y) \rightarrow (-x,y).$$

\begin{lemma} \label{lemma1}
Let $q=p^h$ where $h \in \mathbb{N}$ and $p$ is an odd prime. Let $a \in \mathbb{F}_{q^2}$. Then the curve $\mathcal{C}_a$ defined as in \eqref{family} has genus $5$ and it is hyperelliptic. 
\end{lemma}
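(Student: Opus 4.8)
The plan is to reduce the statement to the separability of the degree-$12$ defining polynomial
$$f(X)=X^{12}-aX^{10}-33X^{8}+2aX^{6}-33X^{4}-aX^{2}+1.$$
Indeed, if $f$ is separable then it is not a square in $\overline{\mathbb F}_p(X)$, so $\mathcal C_a$ is geometrically irreducible, the projection $(x,y)\mapsto x$ defines a degree-$2$ cover $\mathcal C_a\to\mathbb P^1$, and, being of genus $\ge 2$, $\mathcal C_a$ is hyperelliptic. Its genus then follows from Riemann--Hurwitz applied to this cover: it is tamely ramified (odd characteristic), with ramification index $2$, exactly over the $12$ roots of $f$, and unramified over $\infty$ since $\deg f$ is even, whence $2g(\mathcal C_a)-2=2(-2)+12$, i.e. $g(\mathcal C_a)=5$. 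So the whole proof comes down to showing that $f$ has $12$ distinct roots in $\overline{\mathbb F}_p$ under the standing hypothesis $a^{2}+108\ne 0$, cf. \eqref{Condizione}.

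To this end I would first put $g(t):=t^{6}-at^{5}-33t^{4}+2at^{3}-33t^{2}-at+1$, so that $f(X)=g(X^{2})$. Since $g(0)=1\ne 0$, every root of $g$ is nonzero and contributes two distinct square roots, of the same multiplicity, as roots of $f$; hence $f$ is separable if and only if $g$ is. Now $g$ is self-reciprocal, $t^{6}g(1/t)=g(t)$, so the substitution $w=t+1/t$ yields the polynomial identity $g(t)=t^{3}h(t+1/t)$ with
$$h(w)=w^{3}-aw^{2}-36w+4a.$$
Moreover $g(1)=g(-1)=-64\ne 0$, so no root of $g$ equals $\pm1$, and $h(2)=-64$, $h(-2)=64$ are nonzero, so no root of $h$ equals $\pm2$. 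Consequently $t\mapsto t+1/t$ restricts to a surjective $2$-to-$1$ map from the roots of $g$ onto the roots of $h$ which is unramified (its derivative $1-1/t^{2}$ does not vanish at any root of $g$), hence multiplicity-preserving; therefore $g$ is separable if and only if $h$ is.

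It then remains to check that the cubic $h$ has nonzero discriminant. A direct computation gives
$$\operatorname{disc}(h)=16a^{4}+3456a^{2}+186624=16\,(a^{2}+108)^{2},$$
which in odd characteristic vanishes precisely when $a^{2}+108=0$; in characteristic $3$ one reads instead $\operatorname{disc}(h)=a^{4}$, and the hypothesis $a^{2}+108\ne0$ becomes $a\ne0$, so the conclusion is unchanged. Thus under \eqref{Condizione} the polynomial $h$, and with it $g$ and $f$, is separable, and $\mathcal C_a$ is a hyperelliptic curve of genus $5$.

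The only mechanical ingredient is the discriminant of $h$; the reduction $f(X)=g(X^{2})$ together with the self-reciprocal substitution $w=t+1/t$ is exactly what turns that into a cubic discriminant rather than one of a degree-$12$ polynomial, and is the heart of the argument. I would also stress that \eqref{Condizione} is indispensable here: when $a^{2}+108=0$ one finds $f(X)=\bigl(X^{4}-\tfrac{a}{3}X^{2}+1\bigr)^{3}$, so the smooth model then has genus at most $1$; accordingly I would keep \eqref{Condizione} as a hypothesis of Lemma \ref{lemma1}.
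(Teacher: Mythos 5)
Your proof is correct, and it follows the same overall strategy as the paper --- reduce everything to the separability of $f(X)=X^{12}-aX^{10}-33X^8+2aX^6-33X^4-aX^2+1$ and then invoke the standard genus formula for hyperelliptic models (the paper cites \cite[Proposition 3.7.3 and Corollary 3.7.4]{Sti}, which is equivalent to your tame Riemann--Hurwitz count) --- but it differs in how separability is established. The paper disposes of it in one line by computing the resultant $\mathrm{Res}(f,f')=(a^2+108)^8$, a computation one would in practice delegate to a computer algebra system; you instead exploit the structure $f(X)=g(X^2)$ with $g$ self-reciprocal, pass via $w=t+1/t$ to the cubic $h(w)=w^3-aw^2-36w+4a$, and check by hand that $\operatorname{disc}(h)=16(a^2+108)^2$. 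Your intermediate verifications are accurate (the values $g(\pm1)=-64$, $h(\pm2)=\mp(-64)$ rule out the degenerate fibres of $t\mapsto t+1/t$, and the discriminant identity is correct, including the characteristic~$3$ caveat), so the reduction is sound; what it buys is a hand-checkable argument that also explains structurally why the quantity $a^2+108$ governs the family, and it makes visible the same cubic $w^3-aw^2-36w+4a$ that reappears later as the quotient model in Lemma \ref{lemma6} and Remark \ref{isomelliptic}. Your closing remark is also well taken: the hypothesis $a^2+108\ne 0$ of \eqref{Condizione} is genuinely needed (your factorization $f(X)=(X^4-\tfrac{a}{3}X^2+1)^3$ when $a^2+108=0$ shows the conclusion fails without it); the paper imposes it as a standing assumption at the start of Section \ref{Section:Genus5} and uses it in its proof, even though the statement of Lemma \ref{lemma1} does not repeat it, so making it explicit in the hypothesis, as you suggest, is a small but worthwhile clarification.
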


\begin{proof}
To prove the statement it is sufficient to observe that, if $f(X)=X^{12}-aX^{10}-33X^8+2aX^6-33X^4-aX^2+1$ then $f(X)$ has multiple roots if and only if $a^2+108=0$ since the resultant of $f(X)$ and its derivative $f^\prime(X)$ is equal to $(a^2+108)^8$. Now the claim follows from \cite[Proposition 3.7.3 and Corollary 3.7.4]{Sti}.
\end{proof}


A decomposition of the Jacobian $J_a$ of $\mathcal{C}_a$ is obtained applying Theorem \ref{Th:KR}, with respect to the automorphism group $G=\langle \alpha_1,\alpha_2\rangle$.

\begin{remark} \label{obs1}
Since $\alpha_1$ is the hyperelliptic involution of $\mathcal{C}_a$, the fixed fields $FixG$ and $Fix\langle \alpha_1 \rangle$ are both rational, see \cite[Theorem 11.98]{HKT}. More precisely, it is easily seen that 
%
$Fix\langle \alpha_1 \rangle={\overline{\mathbb F}_p}(x)$ and
$FixG={\overline{\mathbb F}_p}(x^2)$.
\end{remark}

To apply Theorem \ref{Th:KR} we need to compute the fixed fields of $\langle \alpha_2\rangle$ and $\langle\alpha_3\rangle=\langle\alpha_1\alpha_2\rangle$. 

\begin{lemma} \label{lemma2}
Let $q=p^h$ where $h \in \mathbb{N}$ and $p$ is an odd prime. Let $a \in \mathbb{F}_{q^2}$ with $a^2+108 \ne 0$. Then $Fix\langle \alpha_2 \rangle={\overline{\mathbb F}_p}(\eta,\theta)$ where $\eta=y$, $\theta=x^2$ and
\begin{equation} \label{quot1}
\eta^2=\theta^6-a\theta^5-33\theta^4+2a\theta^3-33\theta^2-a\theta+1.
\end{equation}
In particular $Fix\langle \alpha_2 \rangle$ is a hyperelliptic curve of genus $2$.
\end{lemma}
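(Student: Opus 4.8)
The plan is to exhibit the quotient curve $\mathcal{C}_a/\langle \alpha_2\rangle$ explicitly via invariant functions, just as in Remark \ref{obs1} and the proof of Lemma \ref{lemma1}. First I would note that the automorphism $\alpha_2:(x,y)\mapsto(-x,y)$ has order $2$, so the fixed field $Fix\langle\alpha_2\rangle$ is an index-$2$ subfield of $F_a$; any function invariant under $x\mapsto -x$ will generate (part of) it. The obvious candidates are $\theta=x^2$ and $\eta=y$, both visibly fixed by $\alpha_2$. Since the defining polynomial $f(X)=X^{12}-aX^{10}-33X^8+2aX^6-33X^4-aX^2+1$ is a polynomial in $X^2$, say $f(X)=g(X^2)$ with $g(T)=T^6-aT^5-33T^4+2aT^3-33T^2-aT+1$, the relation $y^2=f(x)=g(x^2)$ immediately yields $\eta^2=g(\theta)$, which is exactly \eqref{quot1}.

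Next I would verify that ${\overline{\mathbb F}_p}(\eta,\theta)$ is genuinely the whole fixed field, i.e. that $[F_a:{\overline{\mathbb F}_p}(\eta,\theta)]=2$. Since $x\notin{\overline{\mathbb F}_p}(\eta,\theta)$ (it is not invariant under $\alpha_2$) but $x^2=\theta\in{\overline{\mathbb F}_p}(\eta,\theta)$, we have $[F_a:{\overline{\mathbb F}_p}(\eta,\theta)]\leq 2$, and it cannot be $1$ because $\alpha_2$ acts nontrivially; hence it equals $2$ and $Fix\langle\alpha_2\rangle={\overline{\mathbb F}_p}(\eta,\theta)$ by Galois theory of function fields (Artin). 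Alternatively one can argue purely by degrees: ${\overline{\mathbb F}_p}(\theta)={\overline{\mathbb F}_p}(x^2)$ sits in $F_a$ with degree $4$ (degree $2$ for $x$ over $x^2$, degree $2$ for $y$), and ${\overline{\mathbb F}_p}(\eta,\theta)$ strictly contains it with $\eta$ of degree $2$ over ${\overline{\mathbb F}_p}(\theta)$, forcing $[{\overline{\mathbb F}_p}(\eta,\theta):{\overline{\mathbb F}_p}(\theta)]=2$ and $[F_a:{\overline{\mathbb F}_p}(\eta,\theta)]=2$.

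Finally I would compute the genus. The curve \eqref{quot1} is $\eta^2=g(\theta)$ with $\deg g=6$, so it is hyperelliptic; one must check that $g(T)$ is separable. The resultant computation in the proof of Lemma \ref{lemma1} already shows $\mathrm{Res}(f,f')=(a^2+108)^8\neq 0$, and since $f(X)=g(X^2)$ one has $f'(X)=2Xg'(X^2)$; a short argument (e.g. relating $\mathrm{disc}(g)$ to $\mathrm{disc}(f)$, using that $g(0)=1\neq 0$ so $T=0$ is not a root of $g$) gives that $g$ has no repeated roots when $a^2+108\neq 0$. Then \cite[Proposition 3.7.3 and Corollary 3.7.4]{Sti} (or the standard genus formula $g=\lfloor(\deg g-1)/2\rfloor=2$ for a hyperelliptic curve of this degree) yields genus $2$. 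The only mildly delicate point is the separability of $g$; everything else is a direct check with invariant functions.
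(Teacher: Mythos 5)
Your proposal is correct and follows essentially the same route as the paper: exhibit the invariants $\theta=x^2$, $\eta=y$, note the relation $\eta^2=g(\theta)$ since the defining polynomial is a polynomial in $X^2$, identify the fixed field by a degree-$2$ argument, and conclude genus $2$ from \cite[Proposition 3.7.3 and Corollary 3.7.4]{Sti}. The only difference is that you spell out the separability of $g$ (via $f(X)=g(X^2)$, $f'(X)=2Xg'(X^2)$ and $g(0)=1\neq 0$), a point the paper simply attributes to Lemma \ref{lemma1}; your argument for it is valid.
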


\begin{proof}
The first assertion follows from the fact that $x^2$ is fixed by $\alpha_2$.
%

The polynomial $f_1(X)=X^6-aX^5-33X^4+2aX^3-33X^2-aX+1$ has no multiple roots by Lemma \ref{lemma1}. 
From \cite[Proposition 3.7.3 and Corollary 3.7.4]{Sti}, $Fix\langle \alpha_2\rangle$ is hyperelliptic of genus $2$. 
%
\end{proof}

\begin{lemma} \label{lemma3}
Let $q=p^h$ where $h \in \mathbb{N}$ and $p$ is an odd prime. Let $a \in \mathbb{F}_{q^2}$ with $a^2+108 \ne 0$ and let $\alpha_3=\alpha_1 \alpha_2$ so that $\alpha_3: (x,y) \rightarrow (-x,-y)$. 
Then $Fix\langle \alpha_3 \rangle={\overline{\mathbb F}_p}(\rho,\theta)$ where
$\rho=xy$, $\theta=x^2$ and
\begin{equation} \label{quot2}
\rho^2=\theta^7-a\theta^6-33\theta^5+2a\theta^4-33\theta^3-a\theta^2+\theta.
\end{equation}
In particular $Fix\langle \alpha_3 \rangle$ is a hyperelliptic curve  of genus $3$.
\end{lemma}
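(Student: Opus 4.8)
The plan is to follow the same pattern used in Remark \ref{obs1} and Lemma \ref{lemma2}. First I would record the invariants: since $\alpha_3(x^2)=(-x)^2=x^2$ and $\alpha_3(xy)=(-x)(-y)=xy$, both $\theta=x^2$ and $\rho=xy$ lie in $Fix\langle\alpha_3\rangle$, so $\overline{\mathbb F}_p(\rho,\theta)\subseteq Fix\langle\alpha_3\rangle$. Squaring gives $\rho^2=x^2y^2=\theta\cdot y^2$, and substituting $x^2=\theta$ into the defining equation \eqref{family} yields
\[
\rho^2=\theta\bigl(\theta^6-a\theta^5-33\theta^4+2a\theta^3-33\theta^2-a\theta+1\bigr)=\theta^7-a\theta^6-33\theta^5+2a\theta^4-33\theta^3-a\theta^2+\theta,
\]
which is exactly Equation \eqref{quot2}.

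Next I would show that $\overline{\mathbb F}_p(\rho,\theta)$ is the \emph{whole} fixed field. Since $\alpha_3$ is an involution, $[F_a:Fix\langle\alpha_3\rangle]=2$. On the other hand $x$ satisfies $x^2=\theta\in\overline{\mathbb F}_p(\rho,\theta)$ and $y=\rho/x$, so $F_a=\overline{\mathbb F}_p(x,y)=\overline{\mathbb F}_p(\rho,\theta)(x)$ has degree at most $2$ over $\overline{\mathbb F}_p(\rho,\theta)$; as $\alpha_3\neq\mathrm{id}$ acts trivially on $\overline{\mathbb F}_p(\rho,\theta)$, the degree is exactly $2$, forcing $Fix\langle\alpha_3\rangle=\overline{\mathbb F}_p(\rho,\theta)$.

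For the genus I would invoke \cite[Proposition 3.7.3 and Corollary 3.7.4]{Sti} applied to $\rho^2=g(\theta)$ with $g(\theta)=\theta^7-a\theta^6-33\theta^5+2a\theta^4-33\theta^3-a\theta^2+\theta$, which is a polynomial of odd degree $7$. It remains only to verify that $g$ is squarefree: writing $g(\theta)=\theta f_1(\theta)$ with $f_1$ as in Lemma \ref{lemma2}, Lemma \ref{lemma1} ensures $f_1$ has no multiple roots whenever $a^2+108\neq 0$, and $f_1(0)=1\neq 0$ shows $\theta=0$ is not one of those roots, so the extra linear factor $\theta$ cannot coincide with any factor of $f_1$. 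Hence $g$ is squarefree of degree $2k+1$ with $k=3$, and $Fix\langle\alpha_3\rangle$ is the function field of a hyperelliptic curve of genus $3$.

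I do not expect a serious obstacle here: the argument is essentially the same computation as for $\langle\alpha_2\rangle$, with $\rho=xy$ replacing $\eta=y$ and an extra factor of $\theta$ on the right-hand side. The only point deserving a line of justification is that this extra factor does not destroy squarefreeness, which is immediate from $f_1(0)=1$; every other step (the explicit substitution and the degree count identifying the fixed field) is routine.
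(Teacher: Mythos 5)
Your proposal is correct and follows essentially the same route as the paper: verify that $\theta$ and $\rho$ are invariant, identify the fixed field via the degree-$2$ count, derive Equation \eqref{quot2} by substitution, and conclude the genus from \cite[Proposition 3.7.3 and Corollary 3.7.4]{Sti}. The only cosmetic difference is in checking squarefreeness of the right-hand side: the paper computes the resultant of $f_2$ and $f_2'$ directly (equal to $(a^2+108)^4$), while you deduce it from the squarefreeness of $f_1$ together with $f_1(0)=1$, which is equally valid.
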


\begin{proof}
Since $\alpha_3$ fixes both $\theta$ and $\rho$, we have
 ${\overline{\mathbb F}_p}(\theta,\rho) \subseteq Fix\langle \alpha_3 \rangle$. 
 As the degree of the function field extension 
${\overline{\mathbb F}_p}(x,y):{\overline{\mathbb F}_p}(\theta,\rho)$ is clearly $2$, we have that  $Fix\langle \alpha_3\rangle={\overline{\mathbb F}_p}(\theta,\rho)$.
  By Equation \eqref{family},
$$\rho^2=y^2 \theta=(x^{12}-ax^{10}-33x^8+2ax^6-33x^4-ax^2+1)\theta=\theta^{7}-a\theta^{6}-33\theta^5+2a\theta^4-33\theta^3-a\theta^2+\theta.$$
Consider the polynomial $f_2(X)=X^{7}-aX^{6}-33X^5+2aX^4-33X^3-aX^2+X$. Then $f_2(X)$ has no multple roots over ${\overline{\mathbb F}_p}$ since the resultant of $f_2(X)$ and $f_2^\prime(X)$ is equal to $(a^2+108)^4 \ne 0$. From \cite[Proposition 3.7.3 and Corollary 3.7.4]{Sti}, $Fix\langle \alpha_3\rangle$ is a hyperelliptic function field of genus $3$. 
\end{proof}

At this point, we get a first decomposition of the Jacobian of $\mathcal{C}_a$ as a straightforward application of Corollary \ref{Ex:KR}.

\begin{corollary} \label{KR1}
Let $q=p^h$ where $h \in \mathbb{N}$ and $p$ is an odd prime. Let $a \in \mathbb{F}_{q^2}$ with $a^2+108 \ne 0$. Then the Jacobian variety $J_a$ of the curve $\mathcal{C}_a$ defined in \eqref{family} decomposes as

\begin{equation} \label{decomposition1}
{J_a}\sim {J_{a,2}} \times {J_{a,3}},
\end{equation}
where $J_{a,2}$ is the Jacobian variety of the hyperelliptic curve of genus $2$
\begin{equation} \label{gen2}
\mathcal{C}_{a,2}: Y^2=X^6-aX^5-33X^4+2aX^3-33X^2-aX+1,
\end{equation}
while $J_{a,3}$ is the Jacobian variety of the hyperelliptic curve of genus $3$
\begin{equation} \label{gen3}
\mathcal{C}_{a,3}: Y^2=X^7-aX^6-33X^5+2aX^4-33X^3-aX^2+X.
\end{equation}
\end{corollary}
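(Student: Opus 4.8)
The plan is to obtain \eqref{decomposition1} as an immediate application of Corollary \ref{Ex:KR} to the curve $\mathcal{C}_a$ and the automorphism group $G=\langle \alpha_1,\alpha_2\rangle$. First I would record that $G$ is elementary abelian of order $4$, being generated by the two commuting involutions $\alpha_1\colon(x,y)\mapsto(x,-y)$ and $\alpha_2\colon(x,y)\mapsto(-x,y)$, so that $G=\{1_G,\alpha_1,\alpha_2,\alpha_3\}$ with $\alpha_3=\alpha_1\alpha_2\colon(x,y)\mapsto(-x,-y)$. Its three subgroups of order $2$ are $\langle\alpha_1\rangle$, $\langle\alpha_2\rangle$, $\langle\alpha_3\rangle$, and these pairwise intersect trivially, so the hypotheses of Corollary \ref{Ex:KR} are met. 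This yields the $\overline{\mathbb{F}}_p$-isogeny relation
$$J_{\mathcal{C}_a}\times J^2_{\mathcal{C}_a/G}\sim J_{\mathcal{C}_a/\langle\alpha_1\rangle}\times J_{\mathcal{C}_a/\langle\alpha_2\rangle}\times J_{\mathcal{C}_a/\langle\alpha_3\rangle}.$$

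Next I would dispose of the trivial factors. By Remark \ref{obs1}, $\alpha_1$ is the hyperelliptic involution and $FixG={\overline{\mathbb{F}}_p}(x^2)$, $Fix\langle\alpha_1\rangle={\overline{\mathbb{F}}_p}(x)$ are both rational; hence $J_{\mathcal{C}_a/G}$ and $J_{\mathcal{C}_a/\langle\alpha_1\rangle}$ are trivial abelian varieties and drop out of the relation above. For the two remaining quotients I would invoke Lemma \ref{lemma2}, which identifies $\mathcal{C}_a/\langle\alpha_2\rangle$ with the genus $2$ curve $\mathcal{C}_{a,2}$ of \eqref{gen2} through the coordinates $\theta=x^2$, $\eta=y$, and Lemma \ref{lemma3}, which identifies $\mathcal{C}_a/\langle\alpha_3\rangle$ with the genus $3$ curve $\mathcal{C}_{a,3}$ of \eqref{gen3} through $\theta=x^2$, $\rho=xy$. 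Substituting $J_{\mathcal{C}_a/\langle\alpha_2\rangle}\sim J_{a,2}$ and $J_{\mathcal{C}_a/\langle\alpha_3\rangle}\sim J_{a,3}$ then gives $J_a\sim J_{a,2}\times J_{a,3}$, which is \eqref{decomposition1}.

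As a consistency check I would compare dimensions: Lemma \ref{lemma1} gives $g(\mathcal{C}_a)=5$, while the right-hand side has dimension $2+3=5$, so no factor is missing and the isogeny is not merely a one-sided relation. I do not expect a genuine obstacle here, since the substantive work — the rationality of the two distinguished quotients and the genus and explicit models of the other two — has already been carried out in Remark \ref{obs1} and Lemmas \ref{lemma1}, \ref{lemma2}, \ref{lemma3}. The only point requiring a line of care is that Corollary \ref{Ex:KR} is phrased for a subgroup of $Aut_{\mathbb{F}_q}(\mathcal{X})$ whereas the $\alpha_i$ are introduced as automorphisms of the function field $F_a$; this is harmless, as the two viewpoints are interchangeable and the $\alpha_i$ are defined over $\mathbb{F}_p$, so the decomposition is in fact an $\mathbb{F}_p$-isogeny, which is what is needed for the later applications of the Tate–Lachaud criterion.
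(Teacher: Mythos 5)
Your proposal is correct and follows essentially the same route as the paper, which presents the corollary precisely as a straightforward application of Corollary \ref{Ex:KR} to $G=\langle\alpha_1,\alpha_2\rangle$, discarding the rational quotients from Remark \ref{obs1} and substituting the models of Lemmas \ref{lemma2} and \ref{lemma3}. The only small inaccuracy is your closing remark: since $a$ is only assumed to lie in $\mathbb{F}_{q^2}$, the curve $\mathcal{C}_a$ and hence the isogeny are defined over $\mathbb{F}_{q^2}$ (one gets an $\mathbb{F}_{p}$- or $\mathbb{F}_{p^2}$-isogeny only when $a$ lies in that subfield, as in the later applications), but this does not affect the statement being proved.
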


Now, a decomposition of the Jacobian $J_{a,2}$ of the curve $\mathcal{C}_{a,2}$ 
whose  function field  is defined by Equation \eqref{quot1}), is obtained by applying Theorem \ref{Th:KR} to the automorphism group $G_2=\langle \beta_1,\beta_2\rangle$ with
$$\beta_1: (\theta,\eta) \rightarrow (\theta,-\eta),$$
$$\beta_2: (\theta,\eta) \rightarrow (1/\theta,\eta/\theta^3).$$
It is easily seen that $\beta_1,\beta_2 \in Aut(\mathcal C_{a,2})$ and that $G_2 = \langle \beta_1,\beta_2 \rangle$ is elementary abelian of order $4$. Also, since $\beta_1$ is the hyperelliptic involution of $\mathcal{C}_{a,2}$ we know that both $Fix\langle \beta_1 \rangle={\overline{\mathbb F}_p}(\theta)$ and $FixG_2={\overline{\mathbb F}_p}(\theta+1/\theta)$ are rational.
We will prove that $J_{a,2}$ decomposes completely as a power of the Jacobian of a unique elliptic curve. 
First, we  compute the fixed fields of $\langle\beta_2\rangle$ and $\langle\beta_3\rangle=\langle\beta_1 \beta_2\rangle$. 

\begin{lemma} \label{lemma4}
Let $q=p^h$ where $h \in \mathbb{N}$ and $p$ is an odd prime. Let $a \in \mathbb{F}_{q^2}$ with $a^2+108 \ne 0$. Then $Fix\langle \beta_2 \rangle={\overline{\mathbb F}_p}(\tilde{x},\tilde{y})$ where

\begin{equation} \label{quot3}
\tilde{y}^2=4\tilde{x}^3+(a+6)\tilde{x}^2+(a-6)\tilde{x}-4.
\end{equation}
In particular $Fix\langle \beta_2 \rangle$ is elliptic.
\end{lemma}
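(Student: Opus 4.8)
The plan is to mimic exactly the strategy already used for $Fix\langle \alpha_2\rangle$ and $Fix\langle \alpha_3\rangle$: first identify an explicit rational function on $\mathcal{C}_{a,2}$ that is fixed by $\beta_2$ and generates the fixed field together with a second invariant, then write down the defining equation, and finally check the genus is $1$. Concretely, since $FixG_2 = \overline{\mathbb{F}}_p(\theta + 1/\theta)$, a natural coordinate on $Fix\langle\beta_2\rangle$ is $s = \theta + 1/\theta$ (which is $\beta_2$-invariant), and a second invariant is obtained by symmetrizing $\eta$ under $\beta_2$, namely $t = \eta(1 + 1/\theta^3) = \eta\,\frac{\theta^3+1}{\theta^3}$ (compare the choice $t_1 = v'' + v''/(u'')^3$ in the proof of Proposition \ref{tau1tau2}). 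One then expresses $t^2$ in terms of $s$ using the relation \eqref{quot1}: since $\eta^2 = \theta^6 - a\theta^5 - 33\theta^4 + 2a\theta^3 - 33\theta^2 - a\theta + 1$, dividing by $\theta^3$ gives $\eta^2/\theta^3$ as a symmetric Laurent polynomial in $\theta$, hence a polynomial in $s$; multiplying by $(\theta^3+1)^2/\theta^3 = (\theta^{3/2}\cdot(\ldots))$ — more precisely writing $t^2 = \eta^2 (\theta^3+1)^2/\theta^6 = (\eta^2/\theta^3)\cdot(\theta^3 + 2 + 1/\theta^3)$ — and noting $\theta^3 + 1/\theta^3 = s^3 - 3s$, one gets $t^2$ as an explicit polynomial in $s$ of degree $3$. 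A short computation should then put this in the form \eqref{quot3}, $\tilde y^2 = 4\tilde x^3 + (a+6)\tilde x^2 + (a-6)\tilde x - 4$, after a linear change of the $s$-variable and rescaling of $t$; the precise substitution $\tilde x = \tilde x(s)$, $\tilde y = \tilde y(s,t)$ is the routine part I would not grind through here.

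Once the model \eqref{quot3} is established, it remains to verify that it actually defines an elliptic curve, i.e.\ that the cubic $4X^3 + (a+6)X^2 + (a-6)X - 4$ has no repeated root. The discriminant of this cubic is a polynomial in $a$; I expect it to factor with $a^2 + 108$ as a factor (possibly together with a nonzero constant or a power of $2,3$), so that the hypothesis $a^2 + 108 \neq 0$ of the lemma guarantees smoothness. Alternatively — and this is cleaner given the Kani–Rosen setup — one can argue indirectly: $\beta_1$ restricts to the hyperelliptic involution, $Fix\langle\beta_1\rangle$ and $FixG_2$ are both rational, and by Corollary \ref{Ex:KR} applied to $G_2 = \langle\beta_1,\beta_2\rangle$ one has $J_{\mathcal{C}_{a,2}} \sim J_{Fix\langle\beta_2\rangle} \times J_{Fix\langle\beta_3\rangle}$; since $\mathcal{C}_{a,2}$ has genus $2$, the two summands have genera adding to $2$, and a Riemann–Hurwitz count for $\langle\beta_2\rangle$ acting on the genus-$2$ curve $\mathcal{C}_{a,2}$ (counting fixed points of $\beta_2$, equivalently roots of $\theta^2 = 1$ lifting to ramified places, plus behaviour at $\theta = 0,\infty$) pins down $g(Fix\langle\beta_2\rangle) = 1$.

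The main obstacle is essentially bookkeeping rather than conceptual: one must make the right choice of the pair of invariants $(s,t)$ and then carry out the substitution carefully enough that the stated normal form \eqref{quot3} — with its specific coefficients $a+6$, $a-6$, $-4$ — comes out, rather than some equivalent-but-differently-normalized cubic. I would do this by first writing $t^2 = (s^3 - 3s + 2)\bigl((s^3-3s) - a(s^2-2) - 33s + 2a + \text{const}\bigr)$ or similar, recognizing $s^3 - 3s + 2 = (s-1)^2(s+2) = (\theta+1)^2(\theta^2-\theta+1)/\theta^{?}$-type factorizations, absorbing the square factor $(s-1)^2$ into a redefinition of $t$, and then translating $s$. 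The only genuinely delicate point to state precisely is why $Fix\langle\beta_2\rangle = \overline{\mathbb{F}}_p(\tilde x,\tilde y)$ rather than a proper subfield: this follows because $[\,\overline{\mathbb{F}}_p(\theta,\eta):\overline{\mathbb{F}}_p(s,t)\,] = 2$ (as $\beta_2$ has order $2$ and fixes $s,t$) and $[\,\overline{\mathbb{F}}_p(\theta,\eta):\overline{\mathbb{F}}_p(\tilde x,\tilde y)\,]$ divides $2$ with $\overline{\mathbb{F}}_p(\tilde x,\tilde y)\subseteq Fix\langle\beta_2\rangle \subsetneq \overline{\mathbb{F}}_p(\theta,\eta)$, forcing equality, and then the no-multiple-root check for the cubic (via $a^2+108\neq 0$) together with \cite[Proposition 3.7.3]{Sti} gives that this function field has genus $1$.
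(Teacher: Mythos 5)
Your proposal is in substance the paper's own argument: pick explicit $\beta_2$-invariant generators, derive the plane model from \eqref{quot1}, conclude $Fix\langle \beta_2\rangle=\overline{\mathbb{F}}_p(\tilde x,\tilde y)$ by the degree-$2$ count, and get genus $1$ from squarefreeness of the cubic, guaranteed by $a^2+108\neq 0$ (its discriminant is indeed a nonzero constant times $(a^2+108)^2$). The only real difference is the choice of generators, and there is one inaccuracy in your sketch to fix: with $s=\theta+1/\theta$ and $t=\eta(1+\theta^{-3})$ you get, exactly as you computed, $t^2=(s-1)^2(s+2)(s^3-as^2-36s+4a)$, hence after absorbing $(s-1)^2$ into $t$ a \emph{quartic} model $t'^2=(s+2)(s^3-as^2-36s+4a)$; no affine-linear change of $s$ (translation plus rescaling of $t$) can turn this degree-$4$ right-hand side into the degree-$3$ form \eqref{quot3}. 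You must send the rational root $s=-2$ of the quartic to infinity by a fractional-linear substitution, e.g.\ $\tilde x=-(s-2)/(s+2)$ with $\tilde y$ a suitable multiple of $t'/(s+2)^2$. The paper short-circuits this step by choosing the invariants $\tilde x=-(\theta-1)^2/(\theta+1)^2$ (which equals $-(s-2)/(s+2)$) and $\tilde y=-\eta/(\theta+1)^3$ from the outset, so that \eqref{quot1} immediately gives $\tilde y^2=\tilde x^3+\tfrac{a+6}{4}\tilde x^2+\tfrac{a-6}{4}\tilde x-1$, and then rescales $\tilde y$ by $2$ to reach \eqref{quot3}. With that correction your argument is complete and coincides with the paper's; your alternative Kani--Rosen plus Riemann--Hurwitz genus count is also valid (the fixed points of $\beta_2$ are the two places over $\theta=1$), but it is not needed once the cubic is known to be squarefree, which is how the paper concludes via \cite[Proposition 3.7.3]{Sti}.
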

\begin{proof} Let
$$\tilde{x}=-\frac{(\theta-1)^2}{(\theta+1)^2} \quad and \quad \tilde{y}=-\frac{\eta}{(\theta+1)^3}.$$
Note that both $\tilde{x}$ and $\tilde{y}$ are fixed by $\beta_2$; also,
the field extension $\overline{\mathbb F}_p(\theta,\eta):\overline{\mathbb F}_p(\tilde{x},\tilde{y})$  has degree $2$, whence $Fix\langle \beta_2 \rangle=\overline{\mathbb F}_p(\tilde{x},\tilde{y})$.
%
%
%
%
%
%
By Equation \eqref{quot1} $\tilde{y}^2=\tilde{x}^3+\frac{a+6}{4}\tilde{x}^2+\frac{a-6}{4}\tilde{x}-1$. The polynomial $X^3+\frac{a+6}{4}X^2+\frac{a-6}{4}X-1$ has no multiple roots since $(a^2+108)\neq 0$ and hence $Fix\langle \beta_2 \rangle$ is elliptic.
The claim follows considering the isomorphism $(\tilde{x},\tilde{y}) \mapsto (\tilde{x},\tilde{y}/2)$.
\end{proof}

\begin{lemma} \label{lemma5}
Let $q=p^h$ where $h \in \mathbb{N}$ and $p$ is an odd prime. Let $a \in \mathbb{F}_{q^2}$ with $a^2+108 \ne 0$. Let $\beta_3=\beta_1 \beta_2$ so that $\beta_3: (\theta,\eta) \mapsto (1/\theta, -\eta/\theta^3)$. Then $Fix\langle \beta_3 \rangle={\overline{\mathbb F}_p}(\overline{x},\overline{y})$ where
\begin{equation} \label{quot4}
\overline{y}^2=4\overline{x}^3+(-a+6)\overline{x}^2+(-a-6)\overline{x}-4.
\end{equation}
In particular $Fix\langle \beta_3 \rangle$ is elliptic.
\end{lemma}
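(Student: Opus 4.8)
The plan is to imitate the proof of Lemma \ref{lemma4}, and in fact the cleanest route is to reduce \emph{directly} to that lemma by means of the isomorphism $\mathcal C_{a,2}\cong\mathcal C_{-a,2}$ induced by $\theta\mapsto-\theta$. First I would observe that $\phi\colon(\theta,\eta)\mapsto(-\theta,\eta)$ sends the curve of \eqref{quot1} isomorphically onto $\mathcal C_{-a,2}$, since replacing $\theta$ by $-\theta$ on the right-hand side of \eqref{quot1} merely negates the odd-degree coefficients. A one-line computation then shows that $\phi$ conjugates $\beta_3\colon(\theta,\eta)\mapsto(1/\theta,-\eta/\theta^3)$ on $\mathcal C_{a,2}$ into the automorphism $(\theta,\eta)\mapsto(1/\theta,\eta/\theta^3)$ of $\mathcal C_{-a,2}$, i.e.\ into the map playing the role of $\beta_2$ for the parameter $-a$: indeed, applying $\phi$, then $(\theta,\eta)\mapsto(1/\theta,\eta/\theta^3)$, then $\phi^{-1}$, sends $(\theta,\eta)$ to $(1/\theta,-\eta/\theta^3)$. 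Hence $\phi$ descends to an isomorphism $\mathcal C_{a,2}/\langle\beta_3\rangle\cong\mathcal C_{-a,2}/\langle\beta_2\rangle$, and since $(-a)^2+108=a^2+108\neq0$, Lemma \ref{lemma4} (with $a$ replaced by $-a$) identifies the latter quotient with the elliptic curve $\overline y^2=4\overline x^3+(6-a)\overline x^2-(a+6)\overline x-4$, which is exactly \eqref{quot4}. This proves the statement, including ellipticity.

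For a self-contained argument I would instead transport the generators used in Lemma \ref{lemma4} through $\phi$ and set $\overline x=-\dfrac{(\theta+1)^2}{(\theta-1)^2}$ and $\overline y=\dfrac{2\eta}{(\theta-1)^3}$. One checks at once that both are fixed by $\beta_3$: the sign flip of $\eta$ is compensated by the identity $(1/\theta-1)^3=-(\theta-1)^3/\theta^3$, while $\overline x$ is manifestly invariant under $\theta\mapsto1/\theta$. Since $[\overline{\mathbb F}_p(\theta,\eta):\overline{\mathbb F}_p(\overline x,\overline y)]=2$, it follows that $Fix\langle\beta_3\rangle=\overline{\mathbb F}_p(\overline x,\overline y)$. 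Substituting $\overline x$ and $\overline y$ into \eqref{quot1} and clearing denominators then produces the affine model \eqref{quot4}. Finally, the cubic $4X^3+(6-a)X^2-(a+6)X-4$ has simple roots precisely when $a^2+108\neq0$ (its discriminant is, up to a nonzero constant, a power of $a^2+108$, exactly as in Lemma \ref{lemma4}), so $Fix\langle\beta_3\rangle$ is indeed elliptic.

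The only place where an error could enter is the polynomial identity that has to be verified after clearing denominators in the self-contained version — that is, that a certain combination of $(\theta+1)^6$, $(\theta+1)^4(\theta-1)^2$, $(\theta+1)^2(\theta-1)^4$ and $(\theta-1)^6$ with coefficients read off from \eqref{quot4} equals $\theta^6-a\theta^5-33\theta^4+2a\theta^3-33\theta^2-a\theta+1$. This is a purely mechanical expansion, and, crucially, it is nothing other than the identity underlying Lemma \ref{lemma4} after the substitutions $\theta\mapsto-\theta$ and $a\mapsto-a$; so in the final write-up I would invoke Lemma \ref{lemma4} together with the isomorphism $\phi$ rather than redo it. In short, the reduction route carries no real obstacle, and the self-contained route reduces to bookkeeping that is already guaranteed to hold.
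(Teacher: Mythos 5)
Your proposal is correct, and in fact your ``self-contained'' second paragraph is exactly the paper's own proof: the paper simply sets $\overline{x}=-\frac{(\theta+1)^2}{(\theta-1)^2}$, $\overline{y}=\frac{\eta}{(\theta-1)^3}$ and declares the rest analogous to Lemma \ref{lemma4} (your normalization $\overline{y}=\frac{2\eta}{(\theta-1)^3}$ is a harmless improvement, since it lands directly on \eqref{quot4} without the final rescaling $(\overline{x},\overline{y})\mapsto(\overline{x},2\overline{y})$ that Lemma \ref{lemma4} needs). Your preferred first route is a genuine, slightly different packaging of the same content: the involution $\phi\colon(\theta,\eta)\mapsto(-\theta,\eta)$ identifies the curve \eqref{quot1} with its counterpart for the parameter $-a$ and conjugates $\beta_3$ into the corresponding $\beta_2$, so Lemma \ref{lemma5} becomes a formal consequence of Lemma \ref{lemma4} with $a$ replaced by $-a$ (legitimate, since $(-a)^2+108=a^2+108\neq 0$). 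What this buys is that no polynomial identity has to be re-expanded and the $a\leftrightarrow -a$ symmetry between \eqref{quot3} and \eqref{quot4} is made transparent; what the paper's direct route buys is explicit generators of $Fix\langle\beta_3\rangle$ inside $\overline{\mathbb{F}}_p(\theta,\eta)$ written in the original variables, in the same style as Lemmas \ref{lemma4}, \ref{lemma6}, \ref{lemma7}. Your discriminant remark is also accurate: the discriminant of $4X^3+(6-a)X^2-(a+6)X-4$ is a nonzero constant times $(a^2+108)^2$, so ellipticity follows exactly as in Lemma \ref{lemma4}. Either route is a complete proof.
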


\begin{proof}
%
%
%
Let
$$\overline{x}=-\frac{(\theta+1)^2}{(\theta-1)^2} \quad and \quad \overline{y}=\frac{\eta}{(\theta-1)^3}.$$
Then the proof is analogous to that of Lemma \ref{lemma4}.
\end{proof}

\begin{remark} \label{obsiso}
Let $q=p^h$ where $h \in \mathbb{N}$ and $p$ is an odd prime. Let $a \in \mathbb{F}_{q^2}$ with $a^2+108 \ne 0$. Then the quotient curves $\mathcal{C}_{a,2}/\langle \beta_2 \rangle$ and $\mathcal{C}_{a,2}/\langle \beta_3 \rangle$ are $\mathbb{F}_{q^2}$-isomorphic. The isomorphism between the two curves is given by
$$\varphi: (\tilde x, \tilde y) \mapsto (-\tilde x-1,\xi \tilde y),$$
where $\xi^2=-1$. 
\end{remark}

At this point the complete decomposition of $J_{a,2}$ follows by applying Corollary \ref{Ex:KR}  to $G_2$. 

\begin{corollary} \label{KR2}
Let $q=p^h$ where $h \in \mathbb{N}$ and $p$ is an odd prime. Let $a \in \mathbb{F}_{q^2}$ with $a^2+108 \ne 0$. Then the Jacobian variety $J_{a,2}$ of the curve $\mathcal{C}_{a,2}$ defined in \eqref{gen2} decomposes as $J_{a,2} \sim J_{ E_1}^2$ where $J_{ E_1}$ is the Jacobian variety of the elliptic curve
\begin{equation} \label{primaellittica}
 E_1: Y^2=4X^3+(a+6)X^2+(a-6)X-4.
\end{equation}
Furthermore, the Jacobian variety $J_a$ of the curve $\mathcal{C}_a$ defined in \eqref{family} decomposes as
\begin{equation} \label{decomposition2}
{J_a} \sim J_{ E_1}^2 \times {J_{a,3}},
\end{equation}
where $J_{a,3}$ is as in Corollary \ref{KR1}.
\end{corollary}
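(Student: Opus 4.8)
The plan is to run Corollary \ref{Ex:KR} once more, now for the genus $2$ curve $\mathcal{C}_{a,2}$ together with the Klein four-group $G_2=\langle \beta_1,\beta_2\rangle$. Its three order-$2$ subgroups are $\langle \beta_1\rangle$, $\langle \beta_2\rangle$ and $\langle \beta_3\rangle=\langle \beta_1\beta_2\rangle$, so Corollary \ref{Ex:KR} gives the $\mathbb{F}_{q^2}$-isogeny
\[
J_{\mathcal{C}_{a,2}}\times J_{\mathcal{C}_{a,2}/G_2}^2 \sim J_{\mathcal{C}_{a,2}/\langle \beta_1\rangle}\times J_{\mathcal{C}_{a,2}/\langle \beta_2\rangle}\times J_{\mathcal{C}_{a,2}/\langle \beta_3\rangle}.
\]
Since $\beta_1$ is the hyperelliptic involution of $\mathcal{C}_{a,2}$, the fixed fields $Fix\langle \beta_1\rangle=\overline{\mathbb{F}}_p(\theta)$ and $FixG_2=\overline{\mathbb{F}}_p(\theta+1/\theta)$ are both rational, hence $J_{\mathcal{C}_{a,2}/\langle \beta_1\rangle}$ and $J_{\mathcal{C}_{a,2}/G_2}$ vanish and the relation collapses to $J_{a,2}\sim J_{\mathcal{C}_{a,2}/\langle \beta_2\rangle}\times J_{\mathcal{C}_{a,2}/\langle \beta_3\rangle}$.

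Next I would read off the two surviving quotients from the lemmas already proved: by Lemma \ref{lemma4} the curve $\mathcal{C}_{a,2}/\langle \beta_2\rangle$ is the elliptic curve $E_1$ of \eqref{primaellittica}, and by Lemma \ref{lemma5} the curve $\mathcal{C}_{a,2}/\langle \beta_3\rangle$ is the elliptic curve of \eqref{quot4}. Remark \ref{obsiso} exhibits an isomorphism $(\tilde x,\tilde y)\mapsto(-\tilde x-1,\xi\tilde y)$, with $\xi^2=-1$, between these two elliptic curves; as $q$ is odd we have $q^2\equiv 1\pmod 8$, so $-1$ is a square in $\mathbb{F}_{q^2}$ and the isomorphism, hence the induced isogeny of Jacobians, is defined over $\mathbb{F}_{q^2}$. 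Therefore $J_{\mathcal{C}_{a,2}/\langle \beta_3\rangle}\sim J_{E_1}$, and substituting into the collapsed relation gives $J_{a,2}\sim J_{E_1}^2$. Plugging this into the decomposition $J_a\sim J_{a,2}\times J_{a,3}$ of Corollary \ref{KR1} immediately yields \eqref{decomposition2}.

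I do not expect a genuine obstacle here: the whole argument is a bookkeeping application of Kani--Rosen on top of the explicit quotient computations of Lemmas \ref{lemma4} and \ref{lemma5}. The only subtlety worth flagging is the field-of-definition issue just mentioned — one must make sure the isomorphism of Remark \ref{obsiso} is $\mathbb{F}_{q^2}$-rational so that \eqref{decomposition2} is a genuine $\mathbb{F}_{q^2}$-isogeny and not merely a $\overline{\mathbb{F}}_p$-isogeny; this is precisely where the hypothesis that $p$ (hence $q$) is odd is used. If one were content with an isogeny over $\overline{\mathbb{F}}_p$ the point would be vacuous, but the $\mathbb{F}_{q^2}$-statement is what the maximality arguments of the following subsections require.
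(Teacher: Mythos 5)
Your argument is correct and is essentially the paper's own proof: apply Corollary \ref{Ex:KR} to $G_2=\langle\beta_1,\beta_2\rangle$, discard the rational quotients $\mathcal{C}_{a,2}/\langle\beta_1\rangle$ and $\mathcal{C}_{a,2}/G_2$, identify the surviving quotients with $E_1$ via Lemmas \ref{lemma4}, \ref{lemma5} and Remark \ref{obsiso}, and combine with Corollary \ref{KR1}. Your remark on the $\mathbb{F}_{q^2}$-rationality of the isomorphism (since $\xi$ with $\xi^2=-1$ lies in $\mathbb{F}_{q^2}$ for odd $q$) is exactly the content of Remark \ref{obsiso}, so nothing is missing.
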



We proceed now with the complete decomposition of the Jacobian $J_{a,3}$ of the curve $\mathcal{C}_{a,3}$, whose function field ${\overline{\mathbb F}_p}(\rho,\theta)$ is defined in Equation \eqref{quot2}, by applying  Theorem \ref{Th:KR}, with respect to the automorphism group $G_3=\langle \gamma_1,\gamma_2\rangle$, where
$$\gamma_1: (\theta,\rho) \rightarrow (\theta,-\rho),$$
$$\gamma_2: (\theta,\rho) \rightarrow (1/\theta,\rho/\theta^4).$$
It is easily seen that $\gamma_1,\gamma_2 \in Aut(\mathcal C_{a,3})$ and that $G_3 = \langle \gamma_1,\gamma_2 \rangle$ is elementary abelian of order $4$. Since $\gamma_1$ is the hyperelliptic involution of $\mathcal C_{a,c}$, both $Fix\langle \gamma_1 \rangle={\overline{\mathbb F}_p}(\theta)$ and $FixG_3$ are rational.
We first show that $J_{a,3}$ decomposes as a the product of the Jacobians of an elliptic curve and of a curve of genus $2$. 

\begin{lemma} \label{lemma6}
Let $q=p^h$ where $h \in \mathbb{N}$ and $p$ is an odd prime. Let $a \in \mathbb{F}_{q^2}$ with $a^2+108 \ne 0$. Then $Fix\langle\gamma_2\rangle={\overline{\mathbb F}_p}(\delta,\nu)$ where
\begin{equation} \label{quot32}
\nu^2=\delta^3-a\delta^2-36\delta+4a.
\end{equation}
In particular $Fix\langle \gamma_2\rangle$ is elliptic.
\end{lemma}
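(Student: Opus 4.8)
The argument follows the same template as Lemmas~\ref{lemma4} and~\ref{lemma5}. Since $\gamma_2$ has order two, $[\,{\overline{\mathbb F}_p}(\theta,\rho):Fix\langle\gamma_2\rangle\,]=2$ by Artin's theorem, so it suffices to produce two $\gamma_2$-invariant elements generating a subfield of index $2$ in ${\overline{\mathbb F}_p}(\theta,\rho)$, and to identify the relation they satisfy. Set $\delta=\theta+1/\theta$, which is clearly fixed by $\gamma_2$. The right-hand side of \eqref{quot2} equals $\theta\,f_1(\theta)$, where $f_1(X)=X^6-aX^5-33X^4+2aX^3-33X^2-aX+1$ is the palindromic sextic of Lemma~\ref{lemma2}. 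Dividing $f_1(\theta)$ by $\theta^3$ and using $\theta^2+\theta^{-2}=\delta^2-2$ and $\theta^3+\theta^{-3}=\delta^3-3\delta$ gives $f_1(\theta)=\theta^3\bigl(\delta^3-a\delta^2-36\delta+4a\bigr)$, hence by \eqref{quot2}
$$\rho^2=\theta\,f_1(\theta)=\theta^4\bigl(\delta^3-a\delta^2-36\delta+4a\bigr).$$
Therefore $\nu:=\rho/\theta^2$ satisfies $\nu^2=\delta^3-a\delta^2-36\delta+4a$, which is exactly \eqref{quot32}, and $\gamma_2(\nu)=\gamma_2(\rho)/\gamma_2(\theta)^2=(\rho/\theta^4)\cdot\theta^2=\rho/\theta^2=\nu$; thus ${\overline{\mathbb F}_p}(\delta,\nu)\subseteq Fix\langle\gamma_2\rangle$.

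To obtain equality I would count degrees over ${\overline{\mathbb F}_p}(\delta)$. Since $\theta$ is a root of $T^2-\delta T+1$ and $\theta\notin{\overline{\mathbb F}_p}(\delta)$ (the ${\overline{\mathbb F}_p}$-automorphism $\theta\mapsto 1/\theta$ fixes $\delta$ but not $\theta$), we have $[{\overline{\mathbb F}_p}(\theta):{\overline{\mathbb F}_p}(\delta)]=2$; and $[{\overline{\mathbb F}_p}(\theta,\rho):{\overline{\mathbb F}_p}(\theta)]=2$ because $\rho^2=f_2(\theta)$ with $f_2$ the squarefree degree-$7$ polynomial of Lemma~\ref{lemma3}, which is not a square in ${\overline{\mathbb F}_p}(\theta)$ as it has odd degree. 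Hence $[{\overline{\mathbb F}_p}(\theta,\rho):{\overline{\mathbb F}_p}(\delta)]=4$. On the other hand $\nu^2$ is a cubic in $\delta$, again of odd degree, so $[{\overline{\mathbb F}_p}(\delta,\nu):{\overline{\mathbb F}_p}(\delta)]=2$; comparing, $[{\overline{\mathbb F}_p}(\theta,\rho):{\overline{\mathbb F}_p}(\delta,\nu)]=2$. As ${\overline{\mathbb F}_p}(\delta,\nu)\subseteq Fix\langle\gamma_2\rangle$ and the latter also has index $2$, the two fields coincide.

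It remains to check that ${\overline{\mathbb F}_p}(\delta,\nu)$ is elliptic, i.e.\ that $g(\delta)=\delta^3-a\delta^2-36\delta+4a$ has no repeated root. A direct computation gives $\mathrm{disc}(g)=16(a^2+108)^2$, which is nonzero under the standing hypothesis $a^2+108\ne0$ (recall $p$ is odd); hence \eqref{quot32} defines a smooth plane cubic and $Fix\langle\gamma_2\rangle$ is the function field of an elliptic curve. There is no conceptual obstacle here: the only points requiring care are the palindromic rewriting $f_1(\theta)=\theta^3 g(\delta)$ and the discriminant computation, both short and entirely analogous to the manipulations already carried out for $\mathcal{C}_{a,2}$ in Lemmas~\ref{lemma4}--\ref{lemma5}.
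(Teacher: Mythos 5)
Your proposal is correct and follows essentially the same route as the paper: the same invariants $\delta=\theta+1/\theta$ and $\nu=\pm\rho/\theta^{2}$ (the sign is immaterial), the same palindromic rewriting giving $\nu^{2}=\delta^{3}-a\delta^{2}-36\delta+4a$, and the same index-two degree count to identify the fixed field, with your discriminant computation $16(a^{2}+108)^{2}\neq 0$ playing exactly the role of the paper's resultant/squarefreeness check for ellipticity. No gaps.
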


\begin{proof}
Let $\delta=(\theta^2+1)/\theta=\theta+1/\theta=\theta+\gamma_2(\theta)$ and $\nu=-\rho/\theta^2$.
Then $\delta$ and $\rho$ are fixed by $\gamma_2$ since $\gamma_2(\delta)=\gamma_2(\theta)+\gamma_2^2(\theta)=\gamma_2(\theta)+\theta$ and $\gamma_2(\nu)=(-\rho/\theta^4)\cdot \theta^2=-\rho/\theta^2=\nu$. Also from \eqref{quot2},
\begin{eqnarray*}
\nu^2&=&\frac{\rho^2}{\theta^4}=\frac{\theta^7-a\theta^6-33\theta^5+2a\theta^4-33\theta^3-a\theta^2+\theta}{\theta^4}\\
&=&\frac{\theta^6-a\theta^5-33\theta^4+2a\theta^3-33\theta^2-a\theta+1}{\theta^3},
\end{eqnarray*}
whereas
\begin{eqnarray*}
\delta^3-a\delta^2-36\delta+4a&=&\frac{(\theta^2+1)^3-a\theta(\theta^2+1)^2-36\theta^2(\theta^2+1)+4a\theta^3}{\theta^3}\\
&=&\frac{\theta^6-a\theta^5-33\theta^4+2a\theta^3-33\theta^2-a\theta+1}{\theta^3}.
\end{eqnarray*}
Then the rest of the proof is analogous to that of Lemma \ref{lemma4}.
\end{proof}

In the following remark we show that the quotient curve $\mathcal{C}_{a,3}/\langle \gamma_2 \rangle$ and $ E_1$ are $\mathbb{F}_{q^2}$-isomorphic.

\begin{remark} \label{isomelliptic}
Let $q=p^h$ where $h \in \mathbb{N}$ and $p$ is an odd prime. Let $a \in \mathbb{F}_{q^2}$ with $a^2+108 \ne 0$. Then the elliptic curves $ E_1: Y^2=4X^3+(a+6)X^2+(a-6)X-4$ and $\mathcal{C}_{a,3}/\langle \gamma_2 \rangle: Y^2=X^3-aX^2-36X+4a$ are $\mathbb{F}_{q^2}$-isomorphic through the birational morphism $\varphi:  E_1 \rightarrow \mathcal{C}_{a,3}/\langle \gamma_2 \rangle$ given by
$$\varphi(X,Y) \mapsto (-4X-2,4\xi Y),$$
where $\xi^2=-1$.
\end{remark}

\begin{lemma} \label{lemma7}
Let $q=p^h$ where $h \in \mathbb{N}$ and $p$ is an odd prime. Let $a \in \mathbb{F}_{q^2}$ with $a^2+108 \ne 0$. Let $\gamma_3=\gamma_1 \gamma_2$ so that $\gamma_3: (\theta,\rho) \mapsto (1/\theta, -\rho/\theta^4)$. Then $Fix\langle \gamma_3\rangle={\overline{\mathbb F}_p}(\delta,\epsilon)$ where
\begin{equation} \label{quot33}
\epsilon^2=\delta^5-a\delta^4-40\delta^3+8a\delta^2+144\delta-16a=(\delta^2-4)(\delta^3-a\delta^2-36\delta+4a).
\end{equation}
In particular, $Fix\langle \gamma_3\rangle$ is hyperelliptic of genus $2$.
\end{lemma}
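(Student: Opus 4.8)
The plan is to follow the pattern of the proofs of Lemmas~\ref{lemma4}--\ref{lemma6}: exhibit two explicit generators of $Fix\langle\gamma_3\rangle$, verify the displayed equation \eqref{quot33} by direct substitution, and then read off the genus from a squarefreeness check together with \cite[Proposition 3.7.3 and Corollary 3.7.4]{Sti}. Concretely, I would set
$$\delta=\theta+\frac1\theta,\qquad \epsilon=\nu\Bigl(\theta-\frac1\theta\Bigr)=-\frac{\rho(\theta^2-1)}{\theta^3},$$
where $\nu=-\rho/\theta^2$ is the element used in the proof of Lemma~\ref{lemma6}. Since $\gamma_3(\theta)=1/\theta$ one gets $\gamma_3(\delta)=\delta$ immediately; and since $\gamma_3(\rho)=-\rho/\theta^4$, a one–line computation gives $\gamma_3(\nu)=-\nu$ and $\gamma_3(\theta-1/\theta)=-(\theta-1/\theta)$, so $\gamma_3(\epsilon)=\epsilon$. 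Hence $\overline{\mathbb F}_p(\delta,\epsilon)\subseteq Fix\langle\gamma_3\rangle$.

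For the identity, note that $(\theta-1/\theta)^2=(\theta+1/\theta)^2-4=\delta^2-4$, so, using $\nu^2=\delta^3-a\delta^2-36\delta+4a$ from Lemma~\ref{lemma6},
$$\epsilon^2=\nu^2(\delta^2-4)=(\delta^2-4)(\delta^3-a\delta^2-36\delta+4a),$$
which expands to $\delta^5-a\delta^4-40\delta^3+8a\delta^2+144\delta-16a$, i.e.\ exactly \eqref{quot33}. To upgrade the inclusion $\overline{\mathbb F}_p(\delta,\epsilon)\subseteq Fix\langle\gamma_3\rangle$ to an equality I would use a degree count: $[\,\overline{\mathbb F}_p(\theta,\rho):\overline{\mathbb F}_p(\theta)\,]=2$ and $[\,\overline{\mathbb F}_p(\theta):\overline{\mathbb F}_p(\delta)\,]=2$ (because $\theta^2-\delta\theta+1=0$), so $[\,\overline{\mathbb F}_p(\theta,\rho):\overline{\mathbb F}_p(\delta)\,]=4$; on the other hand $[\,\overline{\mathbb F}_p(\theta,\rho):Fix\langle\gamma_3\rangle\,]=2$, so $[\,Fix\langle\gamma_3\rangle:\overline{\mathbb F}_p(\delta)\,]=2$, while $[\,\overline{\mathbb F}_p(\delta,\epsilon):\overline{\mathbb F}_p(\delta)\,]=2$ since the quintic on the right of \eqref{quot33} is not a square in $\overline{\mathbb F}_p(\delta)$. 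Therefore $\overline{\mathbb F}_p(\delta,\epsilon)=Fix\langle\gamma_3\rangle$.

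It then remains to compute the genus. It suffices to show the quintic $(\delta^2-4)(\delta^3-a\delta^2-36\delta+4a)$ has no multiple root: the cubic factor has distinct roots because the resultant of $\delta^3-a\delta^2-36\delta+4a$ with its derivative is $(a^2+108)^2\ne 0$, and neither $2$ nor $-2$ is a root of the cubic since evaluating it at $2$ and $-2$ gives $-64$ and $64$ respectively, both nonzero as $p$ is odd. Hence \cite[Proposition 3.7.3 and Corollary 3.7.4]{Sti} applies to the degree-$5$ model \eqref{quot33} and $Fix\langle\gamma_3\rangle$ is hyperelliptic of genus $2$. I expect no real obstacle here; the only point that deserves to be written out carefully (rather than dismissed as "analogous to Lemma~\ref{lemma6}") is the equality $\overline{\mathbb F}_p(\delta,\epsilon)=Fix\langle\gamma_3\rangle$, which is why I would include the degree count above explicitly.
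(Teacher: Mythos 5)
Your proof is correct and takes essentially the same route as the paper: it uses the same generators $\delta=\theta+1/\theta$ and $\epsilon=\pm\rho(\theta^2-1)/\theta^3$, verifies \eqref{quot33} from $\nu^2=\delta^3-a\delta^2-36\delta+4a$ and $(\theta-1/\theta)^2=\delta^2-4$, and concludes via a degree count and the squarefreeness criterion of \cite[Proposition 3.7.3 and Corollary 3.7.4]{Sti}. The only difference is that you spell out the steps the paper compresses into ``arguing as in the previous proofs'' (the equality $\overline{\mathbb F}_p(\delta,\epsilon)=Fix\langle\gamma_3\rangle$ and the check that the quintic has no repeated roots), which is exactly what is intended there.
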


\begin{proof}
Let $\delta=(\theta^2+1)/\theta=\theta+\/\theta=\theta+\gamma_3(\theta)$ and $\epsilon=(\theta^2\rho-\rho)/\theta^3$. Then $\delta$ and $\epsilon$ are fixed by $\gamma_3$ since $\gamma_3(\delta)=\gamma_3(\theta)+\gamma_3^2(\theta)=\gamma_3(\theta)+\theta$ and $\gamma_3(\epsilon)=(-\rho/\theta^6+\rho/\theta^4) \cdot \theta^3=(-\rho+\rho\theta^2)/\theta^3=\epsilon$.
Using Equation \eqref{quot2} one can easily check that  $\epsilon^2=\delta^5-a\delta^4-40\delta^3+8a\delta^2+144\delta-16a$ holds. Arguing as in the previous proofs, we have that ${\overline{\mathbb F}_p}(\delta,\epsilon)$ coincides with $Fix\langle \gamma_3\rangle $ and it is hyperelliptic of genus $2$. 
\end{proof}

The following corollary summarizes the results obtained in this section so far.

\begin{corollary} \label{KR3}
Let $q=p^h$ where $h \in \mathbb{N}$ and $p$ is an odd prime. Let $a \in \mathbb{F}_{q^2}$ with $a^2+108 \ne 0$. Then the Jacobian variety $J_{a,3}$ of the curve $\mathcal{C}_{a,3}$ defined in \eqref{gen3} decomposes as $J_{a,3} \sim J_{a,3,2} \times J_{ E_1}$  where $J_{ E_1}$ is the Jacobian variety of the elliptic curve
\begin{equation} \label{primaellittica2}
{ E_1}: Y^2=4X^3+(a+6)X^2+(a-6)X-4,
\end{equation}
and $J_{a,3,2}$ is the Jacobian variety of the genus $2$ curve
\begin{equation} \label{ultimagen2}
\mathcal{C}_{a,3,2}: Y^2=X^5-aX^4-40X^3+8aX^2+144X-16a.
\end{equation}
Furthermore, the Jacobian variety $J_a$ of the curve $\mathcal{C}_a$ defined in \eqref{family} decomposes as
\begin{equation} \label{decomposition3}
{J_a} \sim J_{ E_1}^3 \times {J_{a,3,2}}, 
\end{equation}
\end{corollary}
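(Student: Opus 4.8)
The proof is essentially an assembly of the fixed-field computations already carried out in Lemmas~\ref{lemma6} and \ref{lemma7} and the surrounding remarks. The plan is to apply Corollary~\ref{Ex:KR} to the curve $\mathcal{C}_{a,3}$ together with the group $G_3=\langle \gamma_1,\gamma_2\rangle=\{1,\gamma_1,\gamma_2,\gamma_3\}$, which is elementary abelian of order $4$ with $\gamma_3=\gamma_1\gamma_2$. This immediately yields the isogeny relation
$$J_{\mathcal{C}_{a,3}}\times J^2_{\mathcal{C}_{a,3}/G_3}\simeq J_{\mathcal{C}_{a,3}/\langle \gamma_1\rangle}\times J_{\mathcal{C}_{a,3}/\langle \gamma_2\rangle}\times J_{\mathcal{C}_{a,3}/\langle \gamma_3\rangle}.$$

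Next I would evaluate each factor. We already know that $Fix\langle\gamma_1\rangle=\overline{\mathbb F}_p(\theta)$ is rational, since $\gamma_1$ is the hyperelliptic involution, and that $FixG_3\subseteq\overline{\mathbb F}_p(\theta)$ is rational as well (it is the fixed field of $\theta\mapsto 1/\theta$, namely $\overline{\mathbb F}_p(\theta+1/\theta)$); hence $J_{\mathcal{C}_{a,3}/\langle\gamma_1\rangle}$ and $J_{\mathcal{C}_{a,3}/G_3}$ are trivial. By Lemma~\ref{lemma6}, $\mathcal{C}_{a,3}/\langle\gamma_2\rangle$ is the elliptic curve $\nu^2=\delta^3-a\delta^2-36\delta+4a$, which by Remark~\ref{isomelliptic} is $\mathbb{F}_{q^2}$-isomorphic to $E_1$; and by Lemma~\ref{lemma7}, $\mathcal{C}_{a,3}/\langle\gamma_3\rangle$ is the genus $2$ curve $\epsilon^2=\delta^5-a\delta^4-40\delta^3+8a\delta^2+144\delta-16a$, which is precisely $\mathcal{C}_{a,3,2}$ of \eqref{ultimagen2}. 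Substituting these four identifications into the displayed relation gives $J_{a,3}\sim J_{E_1}\times J_{a,3,2}$, the first assertion of the corollary.

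For the remaining claim I would simply combine this with the decomposition \eqref{decomposition2} of Corollary~\ref{KR2}, namely $J_a\sim J_{E_1}^2\times J_{a,3}$: replacing $J_{a,3}$ by $J_{E_1}\times J_{a,3,2}$ yields $J_a\sim J_{E_1}^3\times J_{a,3,2}$, which is exactly \eqref{decomposition3}.

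I do not expect a genuine obstacle here, since the two explicit models and the isomorphism to $E_1$ have already been produced in the preceding lemmas; the step is one of bookkeeping. The only points that deserve a word of care are confirming that the two ``small'' quotients $\mathcal{C}_{a,3}/\langle\gamma_1\rangle$ and $\mathcal{C}_{a,3}/G_3$ are rational, so that their Jacobians genuinely drop out of the Kani--Rosen relation, and keeping track of the field of definition: the identification of $\mathcal{C}_{a,3}/\langle\gamma_2\rangle$ with $E_1$ uses a square root of $-1$, so the resulting isogeny relations are to be read over $\mathbb{F}_{q^2}$ rather than over $\mathbb{F}_q$, consistently with Corollary~\ref{KR2}.
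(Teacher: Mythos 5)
Your proposal is correct and follows exactly the route the paper intends: the corollary is stated as a summary of Lemmas \ref{lemma6} and \ref{lemma7}, Remark \ref{isomelliptic}, and the rationality of $Fix\langle\gamma_1\rangle$ and $FixG_3$, assembled via Corollary \ref{Ex:KR} applied to $G_3$ and then combined with Corollary \ref{KR2}. Your added remarks on the rational quotients dropping out and on the field of definition (the square root of $-1$ in Remark \ref{isomelliptic}) are consistent with the paper's treatment.
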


In the rest of the section we investigate the curve $\mathcal{C}_{a,3,2}$, whose function field is $\overline{\mathbb F}_{q}(\delta,\epsilon)$, where Equation \eqref{quot33} holds. 

For $k\in \mathbb F_{p^2}$, $k\neq \pm 2 $ and $k\neq \pm 6$, let $\lambda=\frac{12-k^2}{2k}$ and 
$a=\frac{\lambda^3-36\lambda}{\lambda^2-4}$. 
%
Then, by straightforward computation
$$
\epsilon^2=(\delta^2-4)(\delta-\lambda) \left(\delta +\frac{2\lambda+12}{\lambda-2}\right)\left(\delta +\frac{-2\lambda+12}{\lambda+2}\right).
$$
We consider the group generated by the hyperelliptic involution and 
$$\psi_i(\delta,\epsilon)=\left(\frac{\lambda \delta+12}{\delta-\lambda},\frac{(-1)^i\mu \epsilon}{(\delta-\lambda)^3}\right),$$
$i=0,1$, where
$\mu^2=(\lambda^2+12)^3$,
which is elementary abelian of order $4$. Note that 
$\mu \in \mathbb{F}_{p^2}$, since $\lambda^2+12=\left(\frac{k^2+12}{2k}\right)^2$ is a square in $\mathbb{F}_{p^2}$.

The functions
$$\frac{(\delta-\lambda)(\lambda^2+12)+(-1)^i\mu}{(\delta-\lambda)^2(\lambda^2+12)}\cdot \epsilon, \qquad \frac{\delta^2+12}{\delta-\lambda}$$
are fixed by $\psi_i$.

By direct computations, the quotient curves $\mathcal{C}_{a,3,2}^i=\mathcal{C}_{a,3,2}/\langle\psi_i \rangle$ have equations

\begin{eqnarray*}
\left(\frac{1}{4}  k^4 - 10  k^2 + 36\right) \epsilon^2&=&    \left(\frac{1}{4}  k^4 - 10  k^2 + 36\right) \delta^3 + \left(\frac{1}{2}  k^5 - 36  k^3+ 264  k\right)\delta^2 \\
&& +(- 32  k^4 + 640  k^2 )\delta + 512 k^3;\\
\left(\frac{1}{4}  k^5 - 10  k^3 + 36  k\right)\epsilon^2 &=& \left(\frac{1}{4}  k^5 - 10  k^3 + 36 k\right)\delta^3 +(- 22 k^4 + 432  k^2 - 864 )\delta^2\\
&&+ (640  k^3 - 4608  k)\delta  - 6144 k^2.\\
\end{eqnarray*}
It turns out that the two elliptic curves are isogenous over $\mathbb{F}_{p^2}$ via  the isogeny $\varphi: \mathcal{C}_{a,3,2}^0 \rightarrow \mathcal{C}_{a,3,2}^1$ with

$$\varphi: (\delta,\epsilon) \mapsto \left(  \frac{a_3 \delta^3+a_2 \delta^2+a_1 \delta+a_0}{(\delta+c)^2}, \frac{\delta\epsilon(b_2 \delta^2+b_1 \delta+b_0)}{(\delta+c)^3}\right),$$

where 
\begin{eqnarray*}
a_0 = -\frac{1536k}{(k^2 - 36))^2k}, \quad 
a_1 = -\frac{3072k}{(k^2 - 36)k}, \quad
a_2 = \frac{32}{k}, \quad 
a_3 = \frac{4}{k^2},\\
b_0 = -\frac{2048}{k^3 - 36 k}, \quad 
b_1 = \frac{1536k^2}{k^2 - 36}, \quad 
b_2 = \frac{8}{k^3}, \quad 
c = -\frac{64k}{k^2 - 36}.\\
\end{eqnarray*}

Now we need to investigate whether $\mathcal{C}_{a,3,2}^0$ and $ E_1$ are isogenous or not. To this end, we will consider the theory of modular polynomials, for which we refer to \cite{Modular1} and the web database \cite{Modular2}. We proceed by arguing as in  \cite[Section 1]{south}.

Let $\Phi_\ell \in \mathbb{Z}[X,Y]$ be the classical modular polynomial of degree $\ell$. It parametrizes pairs of $\ell$-isogenous elliptic curves in terms of their $j$-invariants. More precisely, over a field $\mathbb{F}$ of characteristic not equal to $\ell$, the modular equation $\Phi_\ell(j_1,j_2)= 0$ holds if and only if $j_1$ and $j_2$ are the $j$-invariants of elliptic curves defined over $\mathbb{F}$ that are related by a cyclic isogeny of degree $\ell$. 

Here, $\mathbb{F}$ is a finite field $\mathbb{F}_{p^2}$.
Let $\overline{j}(k)$, and $\widetilde{j}(k)$ be the  $j$-invariants  of $\mathcal{C}_{a,3,2}^0$ and $ E_1$, respectively, both expressed as a function of $k$. We have
$$
\overline{j}(k)=\frac{(k^2+12)^3(k^6-60k^4+1200k^2+192)^3}{64(k^2-36)^2(k^2-4)^6k^2}, \qquad
\widetilde{j}(k)=\frac{4(k^2+12)^6}{(k^2-36)^2(k^2-4)^2k^2}.
$$


We consider the case $\ell=3$.
 The class of  elliptic curves $3$-isogenous to $E_1$ can be determined by computing (if any) the roots of $\Phi_3(Y) = \Phi_3(j(E_1),Y) \in \mathbb{F}_{p^2}[Y]$ over $\mathbb{F}_{p^2}$. 
Hence we search for  $\overline{k} \in \mathbb{F}_{p^2}$ with $\Phi_{3}(\overline{j}(\overline{k}), \widetilde{j}(\overline{k}) )=0$ satisfying Condition \eqref{Condizione}. 

We are not  able a priori to decide whether the $3$-isogeny is between $E_1$ and $C_{a,3,2}^0$ or its twists. So, the condition $\Phi_{3}(\overline{j}(\overline{k}), \widetilde{j}(\overline{k}) )=0$ ensures the existence of  a $3$-isogeny over $\mathbb{F}_{p^4}$ between  $\mathcal{C}_{a,3,2}^0$ and $E_1$. 
Thus we look for explicit descriptions of such $3$-isogenies and check if they are defined over $\mathbb{F}_{p^2}$.

\begin{itemize}
    \item $\overline{k}^2 = 36/5$. In this case the $3$-isogeny between $\mathcal{C}_{a,3,2}^0$ and   $ E_1$ is $(x,y) \mapsto \left(\frac{\psi_x}{\theta_x},\frac{\psi_y}{\theta_y}\right)$, 
    where
    \begin{eqnarray*}
    \psi_x&=&-360 x^3 \overline{k} + 2160 x^3 + 14688 x^2 \overline{k} - 36288 x^2 - 138240 x \overline{k} + 331776 x + 368640 \overline{k} - 1105920;\\
    \theta_x &=&360 x^3 \overline{k} - 1296 x^3 - 10368 x^2 - 69120 x \overline{k} + 82944 x + 92160 \overline{k} - 552960;\\
    \psi_y &=& y(103680 x^4  \overline{k} - 124416 x^4  - 1105920 x^3  \overline{k} + 6635520 x^3  + 17694720 x^2 y \overline{k}\\ &&- 21233664 x^2  - 17694720 x  \overline{k} +     106168320 x);\\
    \theta_y &=&-6480 x^6 \overline{k} + 12960 x^6 - 103680 x^5 \overline{k} - 103680 x^4 \overline{k} - 6842880 x^4 - 15482880 x^3 \overline{k}\\&& - 53084160 x^3 - 225607680 x^2 \overline{k} -     79626240 x^2 - 2548039680 x - 1415577600 \overline{k}.
    \end{eqnarray*}

     \item $\overline{k}^2 = -4/7$. In this case the $3$-isogeny between $\mathcal{C}_{a,3,2}^0$ and   $ E_1$ is $(x,y) \mapsto \left(\frac{\psi_x}{\theta_x},\frac{\psi_y}{\theta_y}\right)$, where
     {\small
    \begin{eqnarray*}
    \psi_x&=&9756997844651904 x^3 \overline{k} + 1773999608118528 x^3 - 236713557425574852 x^2 \overline{k}\\
    &&- 88881615135103896 x^2 -    149484312145104132 x \overline{k} + 93592177008289512 x\\
    &&+ 108081789310424734 \overline{k} + 80629249554918068;\\
    \theta_x &=&-9756997844651904 x^3 \overline{k} + 14445425380393728 x^3 + 117767323565720640 x^2 \overline{k}\\
    &&+ 35210207348410752 x^2 +   76832921522684208 x \overline{k} - 181553980415546592 x\\
    &&- 164874109067025579 \overline{k} - 34657310309390402;\\
    \psi_y &=& 5y(3457479429497341093236434568000000 x^4 \overline{k} - 1014193965985886720682687473280000 x^4 \\
    &&-    15545163634077077821994778380160000 x^3 \overline{k} - 2826393388014014149453596069120000 x^3\\
    &&-   117406724988018917190005308164803520 x^2 \overline{k} + 35214070050544757907472923900067776 x^2\\
    &&+   897502873974695889338633745055844520 x \overline{k} + 43112385947023817822534160466259664 x\\
    &&-  86384192632333382597513730705421100 \overline{k} - 65824077521259664661193935638615080)/6;\\
    \theta_y &=&12012271389339333626787155527680000 x^6 \overline{k} + 4636315273078339294549428449280000 x^6\\ 
    &&+  58558963160486989619604042762240000 x^5 \overline{k} - 111920992422190876567021530009600000 x^5\\
    &&-  753377091175625014779032837560320000 x^4 \overline{k} - 171585897605402253148919969648640000 x^4\\
    &&-  455341963965646265965574769587040000 x^3 \overline{k} + 1538861060386352212314316012831680000 x^3\\
    &&+ 3093062942957171752624793406954480000 x^2 \overline{k} + 374348928145257960497936027483040000 x^2\\
    &&+ 274459963096197145160822714714460000 x \overline{k} - 1900058781725511243895076291080920000 x\\
    &&-  855848892852162349927591124235841875 \overline{k} - 45599538718475703951502910235251250.
    \end{eqnarray*}}
    \item $\overline{k}^2 = 24\overline{k}+36$. In this case the $3$-isogeny between $\mathcal{C}_{a,32}^0$ and  $ E_1$ is $(x,y) \mapsto \left(A(x),cyA^{\prime}(x)\right)$, 
   $A(x)=\frac{a_3x^3+a_2x^2+a_1x+a_0}{x^3+b_2x^2+b_1x+b_0}$, $c^2 - 1/3c - 1/9=0$, and 
    \begin{eqnarray*}
    \overline {k} a_3 + 2/7 \overline{k}  + 6/7 a_3&=&0\\    
    a_2^2 + 276/5 a_2 + 17424/25& =&0\\
    a_1^2 - 384 a_1  + 36864/5 &=&0\\
     a_0^2 + 512 a_0  + 65536/5 &=&0\\
     b_2^2 - 12 b_2  - 1584/5 &=&0\\
     b_1^2 - 192 b_1  + 36864/5 &=&0\\
     b_0^2 + 256 b_0  + 65536/5&=&0.\\
    \end{eqnarray*}
    
     \item $\overline{k}^2 = -24\overline{k}+36$. In this case the $3$-isogeny between $\mathcal{C}_{a,32}^0$ and  $ E_1$ is $(x,y) \mapsto \left(A(x),cyA^{\prime}(x)\right)$, 
   $A(x)=\frac{a_3x^3+a_2x^2+a_1x+a_0}{x^3+b_2x^2}$, $c^2 - 1/3c - 1/9=0$, and 
    \begin{eqnarray*}
    \overline {k} a_3 - 2/5 \overline{k}  - 6/5 a_3&=&0\\    
    a_2^2 - 132a_2 + 11376/5& =&0\\
   a_1^2 - 768a_1 + 589824/5&=&0\\
     a_0^2 - 1024a_0 + 1048576/5&=&0\\
     b_2^2 + 48 b_2 - 144&=&0.\\
    \end{eqnarray*}
\end{itemize}

We summarize all the results about the curve $\mathcal{C}_a$ in the following theorem.

\begin{theorem}\label{Genus5:mainTheorem}
Let $q=p^h$, where $h\in \mathbb{N}$ and $p$ is an odd prime.  Let 
$$a=\frac{-k^6 + 180 k^4 - 2160 k^2 + 1728}{2 k^5 - 80 k^3 + 288 k}\in \mathbb{F}_{q^2}, \qquad k \in \mathbb{F}_{q^2}, \qquad a^2+ 108\neq 0.$$ 

If $k^2=36/5$, $k^2=-4/7$, or $k^2=\pm24 k^2+36$, then  $a \in \mathbb{F}_{p^2}$ and the Jacobian variety $\mathcal{J}_a$ of the curve $\mathcal{C}_a$ defined in \eqref{family}  is $\mathbb{F}_{p^2}$-isogenous to the fifth power of the Jacobian variety $J_{ E_1}$ of the elliptic curve 
\begin{equation*}
{ E_1}: Y^2=4X^3+(a+6)X^2+(a-6)X-4.
\end{equation*}
In particular, there exist infinitely many primes $p$ such that $\mathcal{C}_a$ is either maximal or minimal over $\mathbb{F}_{p^2}$.
\end{theorem}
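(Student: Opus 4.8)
The plan is to bolt together the Jacobian decompositions already obtained in this section into the single $\mathbb{F}_{p^2}$-isogeny relation $J_a\sim J_{E_1}^5$, and then to read off maximality/minimality from a supersingularity argument. Since the value of $a$ prescribed in the statement is exactly the one produced by $\lambda=(12-k^2)/(2k)$, $a=(\lambda^3-36\lambda)/(\lambda^2-4)$ as set up before the theorem, and since all the automorphisms and explicit quotient isomorphisms entering Corollaries \ref{KR1}--\ref{KR3} have coefficients in $\mathbb{F}_{p^2}$ (notably $\mu\in\mathbb{F}_{p^2}$), Corollary \ref{KR3} already gives $J_a\sim J_{E_1}^3\times J_{a,3,2}$ over $\mathbb{F}_{p^2}$. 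So it suffices to prove $J_{a,3,2}\sim J_{E_1}^2$ over $\mathbb{F}_{p^2}$. First I would apply Corollary \ref{Ex:KR} to the genus-$2$ curve $\mathcal{C}_{a,3,2}$ and the Klein four-group generated by its hyperelliptic involution and $\psi_0$ (with $\psi_1$ their product): the quotient by the full group and by the hyperelliptic involution being rational, this yields $J_{a,3,2}\sim J_{\mathcal{C}_{a,3,2}^0}\times J_{\mathcal{C}_{a,3,2}^1}$, where $\mathcal{C}_{a,3,2}^0$ and $\mathcal{C}_{a,3,2}^1$ are the two elliptic curves whose equations are displayed above.

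Next I would feed in the two explicit maps recorded before the theorem. The morphism $\varphi\colon\mathcal{C}_{a,3,2}^0\to\mathcal{C}_{a,3,2}^1$ has $a_0,\dots,a_3,b_0,b_1,b_2,c$ equal to rational functions of $k$ with no pole at our value of $k$ (recall $k\neq 0,\pm 6$), hence lying in $\mathbb{F}_{p^2}$; checking directly that $\varphi$ is an isogeny gives $J_{\mathcal{C}_{a,3,2}^0}\sim J_{\mathcal{C}_{a,3,2}^1}$ and so $J_{a,3,2}\sim J_{\mathcal{C}_{a,3,2}^0}^2$. To compare $\mathcal{C}_{a,3,2}^0$ with $E_1$ I would use the $j$-invariants $\overline{j}(k)$ and $\widetilde{j}(k)$ written above: a finite computation with the classical modular polynomial $\Phi_3$ shows that the values of $k$ permitted by the hypotheses (namely $k^2=36/5$, $k^2=-4/7$, or $k$ a root of $X^2\mp 24X-36$) are exactly the solutions of $\Phi_3(\overline{j}(k),\widetilde{j}(k))=0$ that also satisfy Condition \eqref{Condizione}. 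This alone only forces a $3$-isogeny over $\mathbb{F}_{p^4}$; the role of the bulleted case analysis is to exhibit, case by case, a $3$-isogeny $\mathcal{C}_{a,3,2}^0\to E_1$ whose coefficients — together with the auxiliary algebraic numbers appearing (the $a_i$, $b_i$, and the roots of $c^2-c/3-1/9$), which all lie in $\mathbb{Q}(\sqrt 5)$ or $\mathbb{Q}(\sqrt{-7})$ and hence in $\mathbb{F}_{p^2}$ — are already in $\mathbb{F}_{p^2}$. Combining, $J_{\mathcal{C}_{a,3,2}^0}\sim J_{E_1}$ and so $J_a\sim J_{E_1}^5$ over $\mathbb{F}_{p^2}$. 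A short direct computation also gives $a\in\mathbb{F}_{p^2}$ in every case (one finds $a=\pm 22/\sqrt 5$ or $a=\pm 253\sqrt{-7}/56$, and $\sqrt 5,\sqrt{-7}\in\mathbb{F}_{p^2}$) and $a^2+108\neq 0$ for all but finitely many $p$, so $\mathcal{C}_a$ is a genuine genus-$5$ curve over $\mathbb{F}_{p^2}$ for those $p$.

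For the last sentence, the Tate--Lachaud criterion recalled in the introduction turns $J_a\sim J_{E_1}^5$ into the equivalence: $\mathcal{C}_a$ is $\mathbb{F}_{p^2}$-maximal (respectively minimal) if and only if $E_1$ is. In each case one computes $j(E_1)\in\mathbb{Q}$ (explicitly $16384/5$ or $-15625/28$), a value different from $0$ and $1728$. I would then fix an elliptic curve $E_0/\mathbb{Q}$ with this $j$-invariant; by Elkies \cite{Elkies} it is supersingular for infinitely many primes $p$. For such $p$ large enough, $j(E_1)$ reduces to a supersingular element of $\mathbb{F}_p$, so $E_1$ is an elliptic curve over $\mathbb{F}_{p^2}$ with supersingular $j$-invariant lying in $\mathbb{F}_p$ and different from $0,1728$; hence $E_1$ is $\mathbb{F}_{p^2}$-isomorphic either to a curve defined over $\mathbb{F}_p$ — which is $\mathbb{F}_{p^2}$-maximal, a supersingular elliptic curve over $\mathbb{F}_p$ with $p>3$ having trace of Frobenius $0$ — or to its quadratic twist over $\mathbb{F}_{p^2}$, which is then $\mathbb{F}_{p^2}$-minimal. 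Either way $\mathcal{C}_a$ is maximal or minimal over $\mathbb{F}_{p^2}$, for infinitely many $p$.

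I expect the hardest point to be the rationality control in the $\Phi_3$ step: the modular equation guarantees the $3$-isogeny only over $\mathbb{F}_{p^4}$, so one genuinely has to write the isogeny $\mathcal{C}_{a,3,2}^0\to E_1$ down and confirm that it, and every algebraic number occurring in its coefficients, is defined over $\mathbb{F}_{p^2}$ and not over a proper quadratic extension — which is precisely why one is forced to split into the four cases. A subsidiary, purely bookkeeping difficulty is to keep every isogeny in the chain (those underlying Corollaries \ref{KR1}--\ref{KR3}, the map $\varphi$, and the final $3$-isogeny) over $\mathbb{F}_{p^2}$ rather than merely over $\overline{\mathbb{F}}_p$.
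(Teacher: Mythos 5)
Your proposal is correct and follows essentially the same route as the paper: the section's Kani--Rosen decompositions (Corollaries \ref{KR1}--\ref{KR3}), the Klein four-group splitting of $\mathcal{C}_{a,3,2}$ with the explicit $\mathbb{F}_{p^2}$-isogeny $\varphi$, the $\Phi_3$ modular-polynomial identification of the admissible $k$'s together with the explicit $3$-isogenies checked to be defined over $\mathbb{F}_{p^2}$, yielding $J_a\sim J_{E_1}^5$, and finally the rationality of $j(E_1)$ plus Elkies' theorem for the infinitude of primes. Your treatment of the last step (fixing a $\mathbb{Q}$-model, noting $j(E_1)\neq 0,1728$, and the twist dichotomy giving maximal or minimal) is just a slightly more detailed version of the paper's terse argument, not a different approach.
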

\proof
We only need to prove that there exists a fixed elliptic curve $E$ defined over $\mathbb{Q}$ such that $E_1$ and $E$ are $\mathbb{F}_{p^4}$-isogenous for any $p$. This is clear since the $j$-invariant of $E_1$ is a rational number. 
\endproof

Using the software MAGMA \cite{MAGMA} we determined all the primes $p$ less than $100000$ such that one of the above conditions on $\overline{k}\in \mathbb{F}_p$ is satisfied and $ E_1$ is $\mathbb{F}_{p^2}$-maximal. 

\begin{theorem}\label{Th:Massimalita}
For $a$ and $p$ as in Tables \ref{Table1}, \ref{Table2}, \ref{Table3}, \ref{Table4}, the curve $\mathcal{C}_{a}$ defined in \eqref{gen3} is $\mathbb{F}_{p^2}$-maximal.
\end{theorem}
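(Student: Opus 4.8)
The plan is to obtain Theorem~\ref{Th:Massimalita} as a direct consequence of Theorem~\ref{Genus5:mainTheorem} together with the Tate--Lachaud criterion recalled in the introduction, the only genuinely new input being a finite machine verification. First I would check, for each pair $(a,p)$ appearing in the four tables, that $p$ and $a$ indeed come from a value $\overline{k}\in\mathbb{F}_p$ satisfying one of the conditions $k^2=36/5$, $k^2=-4/7$, $k^2=24k+36$, $k^2=-24k+36$ (together with $a^2+108\neq 0$, and $k\neq\pm 2,\pm 6$ so that the constructions of the section apply). Since in every listed case $\overline{k}\in\mathbb{F}_p$ and $a$ is the rational function $a=\frac{-k^6+180k^4-2160k^2+1728}{2k^5-80k^3+288k}$ of $k$ with rational coefficients, it follows that $a\in\mathbb{F}_p$; in particular $\mathcal{C}_a$ is defined over $\mathbb{F}_p$.

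Next I would invoke Theorem~\ref{Genus5:mainTheorem}: under any one of those conditions the Jacobian $J_a$ of $\mathcal{C}_a$ is $\mathbb{F}_{p^2}$-isogenous to $J_{E_1}^5$, where $E_1 : Y^2=4X^3+(a+6)X^2+(a-6)X-4$. Because $\mathcal{C}_a$ has genus $5$ and is defined over $\mathbb{F}_p$, the Tate--Lachaud criterion says that $\mathcal{C}_a$ is $\mathbb{F}_{p^2}$-maximal if and only if its Jacobian is $\mathbb{F}_{p^2}$-isogenous to the fifth power of an $\mathbb{F}_{p^2}$-maximal elliptic curve; through the isogeny $J_a\sim J_{E_1}^5$ this occurs precisely when $E_1$ is $\mathbb{F}_{p^2}$-maximal. (Concretely, the characteristic polynomial of the $\mathbb{F}_{p^2}$-Frobenius on $J_a$ equals $(T+p)^{10}$ exactly when that of $E_1$ equals $(T+p)^2$, i.e.\ when $\#E_1(\mathbb{F}_{p^2})=p^2+1+2p$.) Thus the statement reduces to the assertion that, for the primes $p$ occurring in the tables, $E_1$ is $\mathbb{F}_{p^2}$-maximal.

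Finally I would settle this last point by computer: for each admissible $p<100000$ and the corresponding $a$, compute the trace of the $\mathbb{F}_p$-Frobenius of $E_1$ (equivalently $\#E_1(\mathbb{F}_{p^2})$) and retain those $p$ for which $E_1$ is $\mathbb{F}_{p^2}$-maximal. The four tables are exactly the output of this MAGMA search, one table per condition on $\overline{k}$; for each entry the three steps above then give $\mathbb{F}_{p^2}$-maximality of $\mathcal{C}_a$.

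The part demanding the most care lies not in this proof but in what underpins Theorem~\ref{Genus5:mainTheorem}: the chain $J_a\sim J_{E_1}^3\times J_{a,3,2}$, then $J_{a,3,2}\sim (J_{\mathcal{C}_{a,3,2}^0})^2$, then $J_{\mathcal{C}_{a,3,2}^0}\sim J_{E_1}$, must be realized over $\mathbb{F}_{p^2}$ and not merely over $\mathbb{F}_{p^4}$. The first two links are handled by Corollaries~\ref{KR3} and~\ref{Ex:KR} and by the explicit $\mathbb{F}_{p^2}$-isogeny $\varphi:\mathcal{C}_{a,3,2}^0\to\mathcal{C}_{a,3,2}^1$; the last link is the delicate one, and it is exactly why the $3$-isogeny between $\mathcal{C}_{a,3,2}^0$ and $E_1$ was exhibited explicitly in each of the four cases, so that one can read off directly that it is defined over $\mathbb{F}_{p^2}$ whenever $\overline{k}\in\mathbb{F}_p$. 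Once that is in place, Theorem~\ref{Th:Massimalita} is immediate.
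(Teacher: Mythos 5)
Your proposal is correct and follows essentially the same route as the paper: Theorem \ref{Genus5:mainTheorem} gives the $\mathbb{F}_{p^2}$-isogeny $J_a\sim J_{E_1}^5$ (the explicit $3$-isogenies and the isogeny $\mathcal{C}_{a,3,2}^0\to\mathcal{C}_{a,3,2}^1$ being the reason it holds over $\mathbb{F}_{p^2}$ and not just $\mathbb{F}_{p^4}$), the Tate--Lachaud criterion reduces maximality of $\mathcal{C}_a$ to maximality of $E_1$ over $\mathbb{F}_{p^2}$, and the tables are exactly the output of the MAGMA search over primes $p<100000$ with $\overline{k}\in\mathbb{F}_p$ satisfying one of the four conditions. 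Your only deviation is cosmetic: you correctly read the conditions as $k^2=\pm 24k+36$ (the paper's statement has a typo, $k^2=\pm 24k^2+36$), and you spell out the Frobenius-polynomial justification that the paper leaves implicit.
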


\begin{table}[ht]
    \centering
    \begin{tabular}{|l|l||l|l||l|l||l|l||l|l|}
    \hline
    $p$&$a$&$p$&$a$&$p$&$a$&$p$&$a$&$p$&$a$\\
    \hline
$  11  $   &   $  0  $   & 
 $  131  $   &   $  75  $   & 
 $  251  $   &   $  221  $   & 
 $  491  $   &   $  254  $   & 
 $  599  $   &   $  24  $   \\ 
 $  1439  $   &   $  984  $   & 
 $  3371  $   &   $  933  $   & 
 $  5639  $   &   $  4445  $   & 
 $  5879  $   &   $  1294  $   & 
 $  6971  $   &   $  2129  $   \\
 $  7079  $   &   $  6366  $   & 
 $  8039  $   &   $  4751  $   & 
 $  8291  $   &   $  5367  $   & 
 $  9839  $   &   $  7465  $   & 
 $  10799  $   &   $  1835  $   \\ 
 $  11171  $   &   $  5951  $   & 
 $  12119  $   &   $  4310  $   & 
 $  14879  $   &   $  2439  $   & 
 $  16931  $   &   $  14808  $   & 
 $  17159  $   &   $  4688  $   \\
 $  18839  $   &   $  17890  $   & 
 $  23039  $   &   $  5286  $   & 
 $  23159  $   &   $  16126  $   & 
 $  25919  $   &   $  24335  $   & 
 $  50291  $   &   $  5355  $   \\
 $  53411  $   &   $  49474  $   & 
 $  53639  $   &   $  10108  $   & 
 $  59051  $   &   $  25275  $   & 
 $  69371  $   &   $  51864  $   & 
 $  74771  $   &   $  11482  $   \\ 
 $  74891  $   &   $  48023  $   & 
 $  75239  $   &   $  47655  $   & 
 $  81119  $   &   $  7710  $   & 
 $  81359  $   &   $  48587  $   & 
&\\
\hline
    \end{tabular}
    \caption{Values $a\in \mathbb{F}_p$ for which $\mathcal{C}_{a,3}$ defined in \eqref{gen3} is $\mathbb{F}_{p^2}$-maximal ($\overline{k}^2=36/5$)}
    \label{Table1}
\end{table}

\begin{table}[ht]
    \centering
    \begin{tabular}{|l|l||l|l||l|l||l|l||l|l|}
    \hline
    $p$&$a$&$p$&$a$&$p$&$a$&$p$&$a$&$p$&$a$\\
    \hline
$11$& $0$&
$23$& $0$&
$71$& $45$&
$263$& $185$&
$1031$& $881$\\
$1283$& $1276$&
$1583$& $207$&
$2039$& $1586$&
$2087$& $752$&
$2543$& $565$\\
$2843$& $2158$&
$7823$& $2710$&
$9851$& $6895$&
$10859$& $7298$&
$12107$& $1665$\\
$15647$& $4803$&
$18719$& $9865$&
$23459$& $17387$&
$25463$& $14115$&
$26723$& $22923$\\
$27791$& $27198$&
$29759$& $25238$&
$31511$& $30573$&
$32579$& $25335$&
$33791$& $31173$\\
$34283$& $3491$&
$35099$& $20799$&
$37619$& $25640$&
$54959$& $54405$&
$56519$& $47622$\\
$61007$& $47142$&
$61559$& $42225$&
$67043$& $32831$&
$71339$& $1126$&
$71843$& $10626$\\
$89783$& $68842$&
$95027$& $9806$&
&&
&&
&\\
\hline
    \end{tabular}
    \caption{Values $a\in \mathbb{F}_p$ for which $\mathcal{C}_{a,3}$ defined in \eqref{gen3} is $\mathbb{F}_{p^2}$-maximal ($\overline{k}^2=-4/7$)}
    \label{Table2}
\end{table}

\begin{table}[ht]
    \centering
    \begin{tabular}{|l|l||l|l||l|l||l|l||l|l|}
    \hline
    $p$&$a$&$p$&$a$&$p$&$a$&$p$&$a$&$p$&$a$\\
    \hline
 $  11  $   &   $  0  $   & 
 $  131  $   &   $  56  $   & 
 $  251  $   &   $  221  $   & 
 $  491  $   &   $  237  $   & 
 $  599  $   &   $  575  $   \\
 $  1439  $   &   $  455  $   & 
 $  3371  $   &   $  2438  $   & 
 $  5639  $   &   $  1194  $   & 
 $  5879  $   &   $  1294  $   & 
 $  6971  $   &   $  2129  $   \\ 
 $  7079  $   &   $  713  $   & 
 $  8039  $   &   $  3288  $   & 
 $  8291  $   &   $  2924  $   & 
 $  9839  $   &   $  7465  $   & 
 $  10799  $   &   $  1835  $   \\ 
 $  11171  $   &   $  5951  $   & 
 $  12119  $   &   $  4310  $   & 
 $  14879  $   &   $  12440  $   & 
 $  16931  $   &   $  2123  $   & 
 $  17159  $   &   $  12471  $   \\ 
 $  18839  $   &   $  17890  $   & 
 $  23039  $   &   $  5286  $   & 
 $  23159  $   &   $  7033  $   & 
 $  25919  $   &   $  24335  $   & 
 $  50291  $   &   $  5355  $   \\ 
 $  53411  $   &   $  3937  $   & 
 $  53639  $   &   $  10108  $   & 
 $  59051  $   &   $  33776  $   & 
 $  69371  $   &   $  51864  $   & 
 $  74771  $   &   $  63289  $   \\ 
 $  74891  $   &   $  48023  $   & 
 $  75239  $   &   $  27584  $   & 
 $  81119  $   &   $  7710  $   & 
 $  81359  $   &   $  48587  $   & 
 &\\
\hline
    \end{tabular}
    \caption{Values $a\in \mathbb{F}_p$ for which $\mathcal{C}_{a,3}$ defined in \eqref{gen3} is $\mathbb{F}_{p^2}$-maximal ($\overline{k}^2=24\overline{k}+36$)}
    \label{Table3}
\end{table}

\begin{table}[ht]
    \centering
    \begin{tabular}{|l|l||l|l||l|l||l|l||l|l|}
    \hline
    $p$&$a$&$p$&$a$&$p$&$a$&$p$&$a$&$p$&$a$\\
    \hline
 $  11  $   &   $  0  $   & 
 $  131  $   &   $  56  $   & 
 $  251  $   &   $  30  $   & 
 $  491  $   &   $  237  $   & 
 $  599  $   &   $  24  $   \\
 $  1439  $   &   $  455  $   & 
 $  3371  $   &   $  933  $   & 
 $  5639  $   &   $  4445  $   & 
 $  5879  $   &   $  4585  $   & 
 $  6971  $   &   $  4842  $   \\ 
 $  7079  $   &   $  6366  $   & 
 $  8039  $   &   $  3288  $   & 
 $  8291  $   &   $  2924  $   & 
 $  9839  $   &   $  2374  $   & 
 $  10799  $   &   $  1835  $   \\ 
 $  11171  $   &   $  5220  $   & 
 $  12119  $   &   $  4310  $   & 
 $  14879  $   &   $  2439  $   & 
 $  16931  $   &   $  14808  $   & 
 $  17159  $   &   $  12471  $   \\ 
 $  18839  $   &   $  17890  $   & 
 $  23039  $   &   $  5286  $   & 
 $  23159  $   &   $  16126  $   & 
 $  25919  $   &   $  1584  $   & 
 $  50291  $   &   $  5355  $   \\
 $  53411  $   &   $  3937  $   & 
 $  53639  $   &   $  10108  $   & 
 $  59051  $   &   $  33776  $   & 
 $  69371  $   &   $  17507  $   & 
 $  74771  $   &   $  11482  $   \\ 
 $  74891  $   &   $  48023  $   & 
 $  75239  $   &   $  27584  $   & 
 $  81119  $   &   $  73409  $   & 
 $  81359  $   &   $  32772  $   & 
 &\\
\hline
    \end{tabular}
    \caption{Values $a\in \mathbb{F}_p$ for which $\mathcal{C}_{a,3}$ defined in \eqref{gen3} is $\mathbb{F}_{p^2}$-maximal ($\overline{k}^2=-24\overline{k}+36$)}
    \label{Table4}
\end{table}

\section{A characterization in terms of automorphism groups}\label{Section:Genus5Characterization}

In this section we extend to the positive characteristic case a characterization of curves with equation \eqref{family} in terms of their automorphism group, provided by Shaska in \cite{shas}.

Let $q=p^h$ where $h \in \mathbb{N}$ and $p$ is an odd prime. Let $a \in \mathbb{F}_{q^2}$ with $a^2+108 \ne 0$. Denote by ${\overline{\mathbb F}_p}$ the algebraic closure of $\mathbb{F}_{q^2}$. We consider the  family of algebraic curves $\mathcal C_a$
%
as  described in \eqref{family}.

We know that $g(\mathcal{C}_a)=5$, $\mathcal{C}_a$ is hyperelliptic and that the hyperelliptic involution is
$$\alpha_1:(X,Y) \mapsto (X,-Y).$$
Our first aim is to show that $Aut(\mathcal{C}_a)$ admits a subgroup $H$ with $H \cong \rm{A_4} \times C_2 \cong SmallGroup(24,13)$ where $\rm{A}_4$ denotes the alternating group in $4$ letters. Then we prove that if $\mathcal{X}_5$ is an hyperelliptic curve of genus $5$ admitting an automorphism group isomorphic to $H$ then $\mathcal{X}$ is ${\overline{\mathbb F}_p}$-birationally equivalent to a curve $\mathcal{C}_a$ in \eqref{family}.


Let $i \in \mathbb{F}_{q^2}$ with $i^2=-1$. Consider the following map
$$\beta:(X,Y) \mapsto \bigg( \frac{X-i}{X+i}, \frac{8iY}{(X+i)^6}\bigg).$$
It is easily seen that  $\beta \in Aut(\mathcal{C}_a)$. 
Also, $\beta$ has order $3$.
Involutory automorphisms of $\mathcal C_a$ are described as follows:
$$\alpha_2:(X,Y) \mapsto (-X,Y) \quad \textrm{and} \quad \alpha_3:(X,Y) \mapsto \bigg( \frac{1}{X}, \frac{Y}{X^6}\bigg).$$
The group generated by $\alpha_1$, $\alpha_2$, and $\alpha_3$ is an elementary abelian group of order $8$.

To show that $H_1=\langle \alpha_2,\beta \rangle \cong \rm{A}_4$ and that $\alpha_1 \not\in H_1$,  
 we first observe that
$$\beta^{-1} \alpha_2 \beta=\alpha_3.$$
Then  $\beta$ normalizes the elementary abelian group of order $4$ generated by $\alpha_2$ and $\alpha_3$.
%
%
This implies that $H_1 \cong (C_2 \times C_2) \rtimes C_3 \cong \rm{A}_4$. Also,  $\alpha_1 \not\in \langle \alpha_2, \alpha_3 \rangle$ yields $\alpha_1 \not\in H_1$. This proves that $Aut(\mathcal{C}_a)$ contains a subgroup $H=\langle H_1,\alpha_1 \rangle =H_1 \times \langle \alpha_1 \rangle \cong \rm{A}_4 \times C_2 \cong SmallGroup(24,13)$.

Assume now that, for an odd prime $p$,  $\mathcal{X}_5$ is a hyperelliptic curve of genus $5$ defined over a finite field of characteristic $p$ admitting an automorphism group $H \cong \rm{A}_4 \times C_2$. Our aim is to prove that $\mathcal{X}_5$ is ${\overline{\mathbb F}_p}$-birationally equivalent to a curve belonging to the family in \eqref{family}.

The curve $\mathcal{X}_5$ admits an affine model
$$\mathcal{X}_5:Y^2=F(X)=\prod_{i=1}^{12} (X-\gamma_i),$$
where the points $P_i=(\gamma_i,0)$, $i=1,\ldots,12$, are the Weierstrass points of $\mathcal{X}_5$; see \cite[Proposition 6.2.3]{Sti} and \cite[Theorem 11.98]{HKT}. Let ${\overline{\mathbb F}_p}(x,y)$ be the function field of $\mathcal{X}_5$. The hyperelliptic involution $\alpha_1 \in H$ is $\alpha_1:(X,Y) \mapsto (X,-Y)$ and its fixed field is ${\overline{\mathbb F}_p}(x)$. The fixed points of $\alpha_1$ on $\mathcal{X}_5$ are exactly $P_i$,  $i=1,\ldots,12$.


Let $\overline {H}=H/\langle \alpha_1 \rangle \cong \rm{A}_4 \leq \textrm{PGL}(2,{\overline{\mathbb F}_p})$.
From \cite[Theorem 1]{VM} (see also \cite[Theorem A.8]{HKT}), any subgroup of $\textrm{PGL}(2,{\overline{\mathbb F}_p})$ which is isomorphic to $\rm{A}_4$ is conjugated to $\overline{ H}$ in $\textrm{PGL}(2,{\overline{\mathbb F}_p})$. This means that up to conjugation we can assume that $\overline{ H}$, seen as an automorphism group of the rational function field ${\overline{\mathbb F}_p}(x)$, is generated by the following maps
$$\theta_1:x \mapsto -x, \quad and \quad \theta_2:x \mapsto \frac{x-i}{x+i},$$
where $i^2=-1$. 

\begin{lemma} \label{trans}
The subgroup $H_1$ of $H$ with $H_1 \cong \rm{A}_4$ acts sharply transitively on the set of Weierstrass points of $\mathcal{X}_5$.
\end{lemma}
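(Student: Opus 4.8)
The plan is to show that the action of $H_1\cong A_4$ on the $12$ Weierstrass points $P_1,\dots,P_{12}$ of $\mathcal{X}_5$ is transitive; since $|H_1|=12$ equals the number of Weierstrass points, transitivity is equivalent to sharp transitivity (the stabilizer of each $P_i$ is trivial). So the whole statement reduces to: $H_1$ has no nontrivial element fixing a Weierstrass point, and $H_1$ is transitive. In fact it suffices to prove that no nontrivial element of $H_1$ fixes any $P_i$: then every $H_1$-orbit on $\{P_1,\dots,P_{12}\}$ has size exactly $12$, hence there is a single orbit. Thus the real content is a fixed-point analysis of $H_1$ acting on $\mathcal{X}_5$.

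First I would pass to the quotient $\mathcal{X}_5/\langle\alpha_1\rangle\cong\mathbb{P}^1$ with coordinate $x$, on which $\overline H=H/\langle\alpha_1\rangle\cong A_4$ acts via the explicit maps $\theta_1:x\mapsto -x$ and $\theta_2:x\mapsto (x-i)/(x+i)$. The Weierstrass points map to the $12$ roots $\gamma_1,\dots,\gamma_{12}$ of $F(X)$ (together they form a single fibre structure: the branch locus of $\mathcal{X}_5\to\mathbb{P}^1$). A nontrivial element $g\in H_1$ fixing some $P_i$ would, after projecting, fix the point $\gamma_i\in\mathbb{P}^1$. Now I would use the classical fact (see \cite[Theorem A.8]{HKT}) that in $\overline H\cong A_4\le\mathrm{PGL}(2,\overline{\mathbb{F}}_p)$ the nontrivial elements are of order $2$ or $3$, each of order $2$ fixing exactly two points of $\mathbb{P}^1$ and each of order $3$ fixing exactly two points, and that these fixed points (the "poles" of the $A_4$-action) are a very specific configuration: the three order-$2$ elements have fixed points at the $6$ "vertices" and the four order-$3$ elements at the $8$ "face centres" — in suitable coordinates the vertex set is $\{0,\infty,\pm1,\pm i\}$ and the $A_4$-orbit of the face centres has size $8$. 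The key point is that none of these special points can be a root of $F(X)$: if $\gamma_i$ were fixed by an order-$3$ element of $\overline H$, then the $\overline H$-orbit of $\gamma_i$ would have size $\le 4$, forcing several $\gamma_j$ to coincide (impossible, as $F$ has $12$ distinct roots by Lemma \ref{lemma1} transported to this model); if $\gamma_i$ were fixed by an order-$2$ element, then, since $A_4$ acts on $\mathbb{P}^1$ with the $6$-point orbit being precisely the union of fixed points of the involutions, the $\overline H$-orbit of $\gamma_i$ would be contained in that $6$-point set, again forcing a repeated root. Either way we contradict the separability of $F$.

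Hence no nontrivial element of $H_1$ fixes a Weierstrass point, so $H_1$ acts freely, every orbit has exactly $12$ elements, and there is exactly one orbit: $H_1$ acts sharply transitively on the $12$ Weierstrass points. I expect the main obstacle to be the careful bookkeeping of the three orbit types of $A_4$ on $\mathbb{P}^1$ (sizes $6$, $8$, and the generic $12$) and making rigorous the claim that the Weierstrass set, having $12$ distinct points stable under $\overline H$, cannot meet the two small orbits; this is really just the statement that a $12$-element $\overline H$-invariant subset of $\mathbb{P}^1$ must be a single free orbit, which follows from the orbit-stabilizer count once the fixed-point structure of $A_4$ in $\mathrm{PGL}(2,\overline{\mathbb{F}}_p)$ is pinned down via \cite[Theorem A.8]{HKT} (valid in odd characteristic, with $p\nmid 12$ so that $A_4$ is tame and the orbit counts are the characteristic-zero ones).
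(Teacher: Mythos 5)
Your proposal is correct in substance but follows a genuinely different route from the paper's. You work entirely on the quotient line: after normalizing $\overline H\cong \mathrm{A}_4\leq \mathrm{PGL}(2,\overline{\mathbb{F}}_p)$, you classify its short orbits on $\mathbb{P}^1$, note that the $12$ branch points form an $\overline H$-invariant set of distinct points, and conclude by an orbit-stabilizer count that this set cannot meet a short orbit, hence is a single regular orbit; freeness together with $|H_1|=12$ then gives sharp transitivity. The paper argues upstairs instead: the stabilizer of a Weierstrass point in $Aut(\mathcal{X}_5)$ is cyclic (prime-to-$p$ part, \cite[Lemma 11.44]{HKT}) and contains $\alpha_1\notin H_1$, so no involution of $H_1$ fixes a Weierstrass point; if the action were not transitive, some stabilizer in $H_1$ would have order $3$, a mod-$3$ count shows such an element fixes at least three Weierstrass points, and their images would be at least three fixed points of a nontrivial automorphism of $\mathbb{P}^1$, contradicting \cite[Theorem 11.14]{HKT}. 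The paper's argument is shorter and works uniformly for every odd $p$; yours is more geometric and actually yields the slightly stronger fact (used later in Theorem \ref{caratterizzazione}) that the branch locus is a single regular $\mathrm{A}_4$-orbit.

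Three details of your write-up need tightening. First, the eight fixed points of the order-$3$ elements do not form one orbit of length $8$ (impossible, as $8\nmid 12$); they split into two orbits of length $4$. This does not damage the argument: with exceptional orbit lengths $6,4,4$, once the branch locus contained a short orbit its complement would have size $6$ or $8$, and neither can be partitioned into the remaining available lengths (each exceptional orbit occurring at most once), so the $12$-point invariant set must be a regular orbit. Second, the clause ``forcing several $\gamma_j$ to coincide'' is not the right justification --- a short orbit inside the branch locus does not by itself produce a multiple root of $F$; the correct contradiction is exactly the partition count you state in your closing paragraph, so replace the coincidence claim by it. Third, your hypothesis $p\nmid 12$ excludes $p=3$, which the lemma allows (in characteristic $3$ the condition $a^2+108\neq0$ just reads $a\neq0$); there the order-$3$ elements are wild and fix a single point each, the exceptional orbits have lengths $6$ and $4$, and the same count still forces a regular orbit --- or one can fall back on the paper's argument, which only uses that a nontrivial automorphism of $\mathbb{P}^1$ fixes at most two points and hence is characteristic-free.
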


\begin{proof}
Since $H_1$ is an automorphism group of $\mathcal{X}_5$, it acts on the set $W$ of its Weierstrass points. Since the stabilizer in $Aut(\mathcal{X}_5)$ of a point $P \in W$ is cyclic from \cite[Lemma 11.44]{HKT} and clearly contains $\alpha_1$ 
we deduce that the $3$ involutions in $H_1$ fix no points on $W$. 
Suppose that  $H_1$ does not act transitively on $W$. Then the stabilizer $H_{1,P}$ of $P$ has order $3$. A nontrivial element $\delta \in H_{1,P}$ fixes at least three points of $W$, since the number of fixed points must be congruent to $0$ modulo $3$. Consider the element $\overline{\delta}$ induced by $\delta$ on $Aut(\mathcal{X}_5/\langle \alpha_1 \rangle)$. Each point $\overline P$ lying under a fixed point of $\delta$ in $W$ is fixed by $\overline{\delta}$ 
On the other hand, by \cite[Theorem 11.14 (d)]{HKT}, $\overline{\delta}$ fixes exactly $2$ points on $\mathcal{X}_5/\langle \alpha_1 \rangle$, a contradiction.
%
\end{proof}

\begin{theorem}\label{caratterizzazione}
Let $p$ be an odd prime and let ${\overline{\mathbb F}_p}$ be the algebraic closure of $\mathbb F_p$. Let $\mathcal{X}_5 \ : \ Y^2=F(X)$ be an hyperelliptic curve of genus $5$ defined over ${\overline{\mathbb F}_p}$  and let $\alpha_1$ denote its hyperelliptic involution. If $\mathcal{X}_5$ admits an automorphism group $H\cong \rm{A}_4 \times C_2$ with $\overline{H}=H/\langle \alpha_1 \rangle \cong \rm{A}_4$ then $\mathcal{X}_5$ is ${\overline{\mathbb F}_p}$-birationally equivalent to a curve $\mathcal{C}_a$ given in \eqref{family}.
 If in addition $\mathcal{X}_5$ is defined over a finite field $\mathbb{F}_q$ with $q \equiv 1 \pmod 4$ and  admits at least an $\mathbb{F}_q$-rational Weierstrass point $P=(t,0)$,  then $\mathcal{X}_5$ is $\mathbb F_q$-birationally equiavalent to $\mathcal{C}_a$. \end{theorem}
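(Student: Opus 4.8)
The plan is to settle the statement over $\overline{\mathbb F}_p$ first, and then to descend it to $\mathbb F_q$.

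\emph{Step 1 (reduction over $\overline{\mathbb F}_p$).} By the result of Valentini and Madan quoted before the statement, after a change of the coordinate $x$ (an element of $\textrm{PGL}(2,\overline{\mathbb F}_p)$) we may assume that $\overline H=H/\langle\alpha_1\rangle$ is the standard copy of $\mathrm A_4$ generated by $\theta_1\colon x\mapsto -x$ and $\theta_2\colon x\mapsto\frac{x-i}{x+i}$. This change of variable carries $\mathcal X_5$ to a hyperelliptic model $Y^2=F(X)$ whose branch locus, by Lemma \ref{trans}, is a single free orbit of $\langle\theta_1,\theta_2\rangle$ on $\mathbb P^1$ (hence $12$ finite points, since $\theta_1$ fixes $\infty$ so the orbit of $\infty$ is not free). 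As two hyperelliptic curves of genus $\ge 2$ with the same branch locus differ only by rescaling $Y$ by a scalar — automatically a square over $\overline{\mathbb F}_p$ — it suffices to prove: \emph{every free orbit of $\langle\theta_1,\theta_2\rangle$ is the zero locus of $F_a$ for some $a$ with $a^2+108\neq 0$}.

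\emph{Step 2 (free $\mathrm A_4$-orbits and the pencil $F_a$).} I factor the $\mathrm A_4$-action through its Klein four subgroup $V=\langle x\mapsto -x,\ x\mapsto 1/x\rangle$: with $u=x^2+x^{-2}$ one has $\overline{\mathbb F}_p(x)^V=\overline{\mathbb F}_p(u)$, and the $V$-orbit of a general point is the zero locus of $X^4-uX^2+1$; a one-line computation shows that the order-$3$ automorphism $\bar\beta$ induced by $\theta_2$ acts on the $u$-line as $u\mapsto\frac{2u-12}{u+2}$. Thus a free $\mathrm A_4$-orbit on the $x$-line is the union of the three $V$-orbits lying over a free $\bar\beta$-orbit $\{u_1,u_2,u_3\}$, with defining polynomial $\prod_{i=1}^3(X^4-u_iX^2+1)$. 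Now $g_a(Z):=Z^3-aZ^2-36Z+4a=(Z^3-36Z)-a(Z^2-4)$ has $\bar\beta$-invariant zero set, since $g_a\!\bigl(\tfrac{2u-12}{u+2}\bigr)(u+2)^3=-64\,g_a(u)$ and $g_a(-2)=64\neq 0$; the symmetric function $u_1+u_2+u_3=\frac{u^3-36u}{u^2-4}$ generates $\overline{\mathbb F}_p(x)^{\mathrm A_4}$ over $\overline{\mathbb F}_p$, so $a$ parametrizes the free $\bar\beta$-orbits bijectively, the degenerate values $a^2+108=0$ and $a=\infty$ corresponding respectively to the two fixed points of $\bar\beta$ and to the non-free orbit $\{2,-2,\infty\}$; and expanding $\prod_i(X^4-u_iX^2+1)$ with $u_1{+}u_2{+}u_3=a$, $u_1u_2{+}u_1u_3{+}u_2u_3=-36$, $u_1u_2u_3=-4a$ gives exactly $F_a(X)$. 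Combined with Step 1 this yields $\mathcal X_5\cong_{\overline{\mathbb F}_p}\mathcal C_a$ for a unique $a$ with $a^2+108\neq 0$.

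\emph{Step 3 (descent to $\mathbb F_q$).} Now suppose $\mathcal X_5$ is defined over $\mathbb F_q$ with $q\equiv 1\pmod 4$, so that $i\in\mathbb F_q$ and hence $\langle\theta_1,\theta_2\rangle\subseteq\textrm{PGL}(2,\mathbb F_q)$, and the lifted automorphisms $\alpha_2,\alpha_3,\beta$ of $\mathcal C_a$ are defined over $\mathbb F_q$ whenever $a\in\mathbb F_q$. Since the branch locus $R$ of $\mathcal X_5$ is defined over $\mathbb F_q$, the group $\overline H\cong\mathrm A_4$ is $\mathrm{Gal}(\overline{\mathbb F}_p/\mathbb F_q)$-stable. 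Writing the Frobenius action on $\overline H$ as conjugation by an element $c$ of $N_{\textrm{PGL}(2,\overline{\mathbb F}_p)}(\overline H)\cong\mathrm S_4$ and using that $\mathrm A_4$ has trivial centralizer in $\textrm{PGL}(2,\overline{\mathbb F}_p)$, one gets $c^{\mathrm{Frob}}=c$, whence $c^{-1}\overline H c\subseteq\textrm{PGL}(2,\mathbb F_q)$; as any two subgroups of $\textrm{PGL}(2,\mathbb F_q)$ isomorphic to $\mathrm A_4$ are conjugate there, $\overline H$ is $\textrm{PGL}(2,\mathbb F_q)$-conjugate to $\langle\theta_1,\theta_2\rangle$. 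Carrying out this conjugation over $\mathbb F_q$ sends $R$ to a free orbit $R_a$ of $\langle\theta_1,\theta_2\rangle$ with $a\in\mathbb F_q$; thus $\mathcal X_5$ and $\mathcal C_a$ have the same branch locus and are quadratic twists of one another. It remains to exclude the nontrivial twist, and this is where the $\mathbb F_q$-rational Weierstrass point $P=(t,0)$ is used: after the conjugation $t$ becomes an $\mathbb F_q$-rational root of $F_a$, and combining this with $q\equiv 1\pmod4$ — which makes $-1$, and hence the octahedral values $F_a(0)=1$ and $F_a(\pm1)=F_a(\pm i)=-64$, squares in $\mathbb F_q$ — and with the $\mathbb F_q$-rationality of the lifted automorphisms of $\mathcal C_a$, one checks that the twisting scalar is a square; hence $\mathcal X_5\cong_{\mathbb F_q}\mathcal C_a$.

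The delicate part of the argument is entirely in Step 3: making the conjugating Möbius transformation $\mathbb F_q$-rational, which rests on the group-theoretic fact above and genuinely uses $q\equiv1\pmod4$, and then killing the residual quadratic-twist ambiguity, for which the $\mathbb F_q$-rational Weierstrass point is exactly the missing piece of information.
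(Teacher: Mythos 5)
Your Steps 1 and 2 are correct, and they are essentially the paper's own argument for the geometric statement in a cleaner package: after the Valentini--Madan conjugation, Lemma \ref{trans} makes the branch locus a free orbit of $\langle\theta_1,\theta_2\rangle$, and where the paper writes the twelve orbit points in terms of one of them and expands the product, you pass to the Klein subgroup, use the induced degree-$3$ action $u\mapsto (2u-12)/(u+2)$ on $u=x^2+x^{-2}$, and recover the right-hand side of \eqref{family} (call it $F_a(X)$) by symmetric functions. The identity $g_a\bigl((2u-12)/(u+2)\bigr)(u+2)^3=-64\,g_a(u)$ and the expansion of $\prod_{i}(X^4-u_iX^2+1)$ both check out, and your remark that $\infty$ lies in no free orbit is a neat justification of the normalization $\deg F=12$. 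So the $\overline{\mathbb F}_p$-statement is fine.

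Step 3 is where the genuine gap lies. Two intermediate deductions are already unjustified: the Frobenius-stability of $\overline H$ does not follow merely from the rationality of the branch locus (you need to know the stabilizer of the twelve points, or the full reduced automorphism group, is no larger than $A_4$), and from $c^{\mathrm{Frob}}=c$ the conclusion ``whence $c^{-1}\overline Hc\subseteq \mathrm{PGL}(2,\mathbb F_q)$'' is a non sequitur, since $(c^{-1}hc)^{\mathrm{Frob}}=c^{-2}hc^{2}$; repairing this needs something like Lang's theorem, and then the conjugator is no longer rational, so the subsequent use of conjugacy of $A_4$'s inside $\mathrm{PGL}(2,\mathbb F_q)$ does not apply to $\overline H$ as it stands. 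But the decisive problem is the final sentence: ``one checks that the twisting scalar is a square'' is an assertion, not a proof, and in fact no such check can succeed from the stated hypotheses. When $q\equiv 1\pmod 4$ all of $\alpha_1,\alpha_2,\alpha_3,\beta$ have coefficients in $\mathbb F_q$, so the nontrivial quadratic twist $\mathcal X\colon Y^2=cF_a(X)$, with $c\in\mathbb F_q^*$ a nonsquare, is again a genus-$5$ hyperelliptic curve over $\mathbb F_q$ admitting $H\cong A_4\times C_2$ (even $\mathbb F_q$-rationally), and it has exactly the same $\mathbb F_q$-rational Weierstrass points $(t,0)$ as $\mathcal C_a$, because a Weierstrass point has $Y=0$ and is blind to the twist. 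Yet whenever the full geometric automorphism group of $\mathcal C_a$ equals $H$ (which happens for all but finitely many $a$), $\mathcal X$ is not $\mathbb F_q$-isomorphic to $\mathcal C_a$: composing a putative $\mathbb F_q$-isomorphism $\varphi\colon\mathcal X\to\mathcal C_a$ with $(X,Y)\mapsto(X,\sqrt c\,Y)$ yields a geometric automorphism $\psi$ of $\mathcal C_a$ with $\psi^{-1}\psi^{\mathrm{Frob}}=\alpha_1$, impossible because Frobenius fixes every element of $H$; moreover $\mathcal C_{-a}\cong_{\mathbb F_q}\mathcal C_a$ via $(X,Y)\mapsto(iX,Y)$, and for generic $a$ no other member of the family is even geometrically isomorphic, so $\mathcal X$ is not $\mathbb F_q$-equivalent to any $\mathcal C_{a'}$. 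Hence the rational Weierstrass point cannot pin down the twist, and your Step 3 does not establish the $\mathbb F_q$-statement. To be fair, you have located exactly the point that the paper's own one-line justification passes over in silence; but closing it would require using the rational point in an essentially different way (for instance a rational point with $Y\neq 0$ lying over $X\in\{0,\pm1,\pm i\}$, where $F_a$ takes the square values $1$ and $-64$, would determine the twist class), or else modifying the hypotheses or the conclusion of the second claim.
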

\proof
Without loss of generality assume that $\deg(F(x))=12$.
Lemma \ref{trans} shows that the roots of $F(X)$ form an orbit under the action of an automorphism group of ${\overline{\mathbb F}_p}(x)$ isomorphic to $A_4$, namely $\overline H=H/\langle \alpha_1 \rangle$. Then there is an automorphism $\gamma$ of ${\overline{\mathbb F}_p}(x)$ such that
$$
\gamma\overline H\gamma^{-1}=\langle \theta_1, \theta_2 \rangle.
$$
Hence $\gamma$ maps the roots of $F(X)$ to an orbit $\Omega$ under the action of $\langle \theta_1, \theta_2 \rangle$. If $t$ is a point in $\Omega$, then $\Omega$ consists of

$$t, \ \theta_1(t)=-t, \ \theta_2(t)=\frac{t-i}{t+i}, \ \theta_2^2(t)=-i\frac{t+1}{t-1}, \  \theta_1\theta_2(t)=-\frac{t-i}{t+i}, \  \theta_2\theta_1(t)=\frac{t+i}{t-i}, \ \theta_1\theta_2\theta_1(t)=-\frac{t+i}{t-i},$$
$$ \theta_2^2\theta_1(t)=-i\frac{t-1}{t+1}, \ \theta_1\theta_2^2 (t)=i\frac{t+1}{t-1},\ 
\theta_1\theta_2^2 \theta_1(t)=i\frac{t-1}{t+1}, \ \theta_2^2\theta_1\theta_2(t)=-\frac{1}{t}, \ \theta_1\theta_2^2\theta_1\theta_2(t)=\frac{1}{t}.$$
Label these values with $z_1,\ldots,z_{12}$.  Then
$G(X):=\prod_{i=1}^{12}(X-z_i)$ coincides with
\begin{eqnarray*}
X^{12} + \frac{-t^{12} + 33t^8 + 33t^4 - 1}{t^{10} - 2t^6 + t^2}x^{10} - 33X^8  
+2\frac{t^{12} - 33t^8 - 33t^4 + 1}{t^{10} - 2t^6 + t^2}X^6\\- 33X^4+ \frac{-t^{12} +    33t^8 + 33t^4 - 1}{t^{10} - 2t^6 + t^2}X^2 + 1,
\end{eqnarray*}
Since the roots of $F(X)$ are mapped to the roots of $G(X)$ by an automorphism of $\overline{\mathbb F}_p(x)$, the curve $\mathcal X_5$ is birationally equivalent to that of equation $Y^2=G(X)$, which clearly belongs to the family described in \eqref{family} for
$$a=\frac{t^{12} - 33t^8 - 33t^4 + 1}{t^{10} - 2t^6 + t^2}=\frac{t^{12} - 33t^8 - 33t^4 + 1}{t^2(t^4-1)^2}.$$

Note that if $q\equiv 1 \pmod{4}$ then the group $H$ is defined over  $\mathbb{F}_q$ 
and  $\overline{H}$ is conjugated to $\langle \theta_1,\theta_2\rangle$ in $\textrm{PGL}(2,q)$; see \cite[Theorem A.8]{HKT}. This implies that if the curve $\mathcal{X}_5$ admits a Weierstrass point $(t,0)$ which is $\mathbb{F}_q$-rational then $\mathcal{X}_5$ is isomorphic over $\mathbb{F}_q$ to a curve of type $\mathcal{C}_a$ in \eqref{family}.
\endproof

\section{Other examples of maximal curves}\label{Section:HigherGenus}
In this section we investigate other examples of maximal curves.


\begin{definition}\label{CurveGenere10}
Let $p$ be an odd prime and $b \in \mathbb{F}_p$. We denote by $U_{p,b}$ the curve of the affine equation 
\begin{equation}\label{CurveGenus10}
 x^6 + y^6 + 1 + b x^2y^2 = 0.
 \end{equation}
\end{definition}

\begin{proposition} Let $p\geq5$, then $U_{p,b}$ has genus $10$ and $Jac(U_{p,b})\simeq_{\mathbb{F}_{p^2}}  E_1^3\times E_2\times E_3^3\times E_4^3$, where

\begin{eqnarray*}
E_1& :& y^2 = x^3 + 3(b + 6)x^2 + 3(b + 3)(b + 12)x + 27(b + 3)^2,\\
E_2 &:& y^2 = x^3 - 3^3b^2x^2 + 2^33^3b(b^3 + 27)x - 3^32^4(b^3 + 27)^2,\\
E_3 &: & y^2 = x^3 + 2bx^2 + b^2x - 2^2,\\
E_4 &: &y^2 = x^3-48b^2x^2+768b(b^3 + 27)x - 4096(b^3 + 27)^2.\\
\end{eqnarray*}
\end{proposition}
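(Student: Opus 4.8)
The plan is to mimic the strategy used for the genus~$4$ curve $\mathcal C$ and the genus~$5$ family $\mathcal C_a$: exhibit a large enough elementary abelian $2$-group (or, more efficiently, a $C_2\times C_2$) of automorphisms of $U_{p,b}$, apply Theorem~\ref{Th:KR} (resp.\ Corollary~\ref{Ex:KR}) repeatedly to the curve and to the intermediate quotients, and identify each quotient explicitly. First I would write down the obvious automorphisms of $U_{p,b}: x^6+y^6+1+bx^2y^2=0$, namely the sign changes $\sigma_1:(x,y)\mapsto(-x,y)$, $\sigma_2:(x,y)\mapsto(x,-y)$, the swap $\tau:(x,y)\mapsto(y,x)$, together with the ``projective'' involutions coming from the symmetry of the equation in the three monomials $x^6,y^6,1$; e.g.\ $(x,y)\mapsto(1/x,y/x)$ and its conjugates. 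These generate a group $G$ of order divisible by $8$ (it should be a subgroup of $(S_3\ltimes(C_6\times C_6))$-type, but only the $2$-part and a $C_3$ acting on the three elliptic pieces are needed). The Hurwitz formula applied to the various order-$2$ subgroups, combined with a count of ramification, will show that $U_{p,b}$ has genus~$10$ (this is the content of the first sentence of the Proposition) and that the relevant quotients have the small genera needed to make the Kani--Rosen bookkeeping force elliptic factors.

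Concretely, I would find a subgroup $G_0\cong C_2\times C_2$ whose three nontrivial elements $\alpha_1,\alpha_2,\alpha_3$ have quotients $U_{p,b}/\langle\alpha_i\rangle$ of genus $\le 5$, apply Corollary~\ref{Ex:KR} to get
$$J_{U_{p,b}}\times J^2_{U_{p,b}/G_0}\sim J_{U_{p,b}/\langle\alpha_1\rangle}\times J_{U_{p,b}/\langle\alpha_2\rangle}\times J_{U_{p,b}/\langle\alpha_3\rangle},$$
and then iterate on each quotient that is not yet elliptic, exactly as in the proofs of Propositions~\ref{tau1}, \ref{sigma}, \ref{tau1tau2}. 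For instance, taking $\alpha_1=\sigma_1\sigma_2$ (acting as $x\mapsto -x$, $y\mapsto -y$) one passes to the curve in $u=x^2$, $v=y^2$, $w=xy$ with $u^3+v^3+1+buv\cdot(\text{suitable correction})=0$ and $w^2=uv$; the $C_3$ cyclically permuting $(x^6,y^6,1)$ then separates the three ``$E_3$-type'' factors, while the remaining sign and swap involutions peel off the $E_1$, $E_2$, $E_4$ pieces. At each stage I would put the resulting genus-$1$ curve into Weierstrass form and record an explicit change of coordinates bringing it to the stated models for $E_1,\dots,E_4$; the multiplicities $3,1,3,3$ (summing to $3\cdot1+1+3\cdot1+3\cdot1=10$) should drop out of the Kani--Rosen exponents $h_i=|H_i|$ once the decomposition $G=\bigcup H_i$ is chosen so that the $E_2$-factor appears with multiplicity~$1$ (fixed by the $C_3$) and the others in $C_3$-orbits of size~$3$.

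The one genuinely delicate point, just as in Section~\ref{Section:Genus4}, is that Theorem~\ref{Th:KR} and Corollary~\ref{Ex:KR} a priori only give isogenies defined over $\overline{\mathbb F}_p$ (or over the field over which the automorphisms are defined); here $\sigma_1,\sigma_2,\tau$ and the $C_3$ are all defined over $\mathbb F_p$, so the decomposition is in fact $\mathbb F_p$-isogeny, but one must still check that the \emph{identification} of each quotient with the named curve $E_i$ (Weierstrass reduction) is performed over $\mathbb F_{p^2}$ rather than some larger field. This is where I expect to spend the effort: writing the explicit birational maps and verifying, e.g.\ with \textsc{Magma}, that no coefficient requires an irrational quantity (and handling separately the finitely many $p$ dividing a denominator, as in the proof of Proposition~\ref{sigma}). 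Once that is done, combining all the intermediate isogenies gives $\mathrm{Jac}(U_{p,b})\sim_{\mathbb F_{p^2}} E_1^3\times E_2\times E_3^3\times E_4^3$ and completes the proof.
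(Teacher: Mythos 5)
Your plan differs from what the paper actually does: the paper's proof is essentially a combination of citations — Kawakita's Proposition~10 is quoted to get $\mathrm{Jac}(U_{p,b})\sim_{\mathbb F_{p^2}}H_1^3\times H_2\times \mathrm{Jac}(H_3)^3$ with $H_1$ elliptic, $H_2:x^3+y^3+1+bxy=0$ and $H_3:y^2=-(x^3+bx^2+4)(x^3+1)$ of genus $2$; then a result from Djukanovi\'c's thesis is quoted to split $\mathrm{Jac}(H_3)\sim E_3\times E_4$; finally $H_1,H_2$ are identified birationally with $E_1,E_2$. The first half of your proposal — Kani--Rosen applied to the order-$24$ group generated by the sign changes and the coordinate permutations of $X^6+Y^6+Z^6+bX^2Y^2Z^2$ (this group is an $S_4$, defined over $\mathbb F_p$) — is a sensible way to re-derive Kawakita's decomposition, and the genus statement is immediate anyway from the genus--degree formula for a smooth plane sextic (you cannot get $g=10$ from Hurwitz on quotients alone, and the multiplicities $3$ come from triples of conjugate subgroups giving isomorphic quotients, not from the exponents $h_i=|H_i|$ in Theorem~\ref{Th:KR}).

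The genuine gap is the last splitting step. Your only splitting mechanism is Kani--Rosen, i.e.\ automorphisms, applied to $U_{p,b}$ and its quotients; but this can never separate $E_3$ from $E_4$. Concretely, the genus-$2$ factor arises as $H_3=U_{p,b}/C_4$ for a cyclic group of order $4$ inside the $S_4$ above (e.g.\ generated by $(x,y)\mapsto(-y,x)$). The normalizer of this $C_4$ is a dihedral group $D_4$, and $U_{p,b}/D_4$ is rational (it is the quotient of the elliptic curve $H_2=U_{p,b}/V$ by an involution with fixed points), so the only automorphism of $H_3$ induced by your group is the hyperelliptic involution; for general $b$ the sextic $-(x^3+bx^2+4)(x^3+1)$ admits no nontrivial M\"obius involution either, so no application of Theorem~\ref{Th:KR} or Corollary~\ref{Ex:KR}, at any level of the tower, can split $\mathrm{Jac}(H_3)$. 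The splitting used in the paper comes from Djukanovi\'c's split-Jacobian result, in which $E_3$ and $E_4$ are elliptic subcovers of $H_3$ of degree $3$ — a mechanism entirely absent from your outline, so the sentence claiming that ``the remaining sign and swap involutions peel off the $E_1$, $E_2$, $E_4$ pieces'' has no basis. As written, your argument can prove at most $\mathrm{Jac}(U_{p,b})\sim_{\mathbb F_{p^2}}E_1^3\times E_2\times \mathrm{Jac}(H_3)^3$; to finish you must either invoke the cited split-Jacobian result (checking, as you rightly note, that the degree-$3$ maps are defined over $\mathbb F_{p^2}$) or exhibit such maps explicitly.
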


\proof  
By \cite[Proposition 10]{Kawakita}, $Jac(U_{p,b})\simeq_{\mathbb{F}_{p^2}}  H_1^3\times H_2\times Jac(H_3)^3$, where
\begin{eqnarray*}
H_1&:& y^2 = ((-b-3)x + 3)(1 -3x(1 - x)),\\
H_2 &:& x^3 + y^3 + 1 + bxy = 0,\\
H_3 &:& y^2 = -(x^3 + bx^2 + 4)(x^3 + 1).\\
\end{eqnarray*}

By \cite[Page 27]{Djuka}, for $G_a : y^2 = (x^3 + ax^2 -4)(-16x^3 + 16)$, $a\in \mathbb{F}_p$,  we have that $Jac(G_a)\simeq_{\mathbb{F}_{p^2}} E^{\prime}_3\times E^{\prime}_4$, where
\begin{eqnarray*}
E^{\prime}_3&:& y^2 =x^3-a/2+a^2/16-1/16,\\
E^{\prime}_4 &:& y^2=16(a^3-27)x^3-48a^2x^2+48ax-16.\\
\end{eqnarray*}
This shows that $Jac(H_3)\simeq_{\mathbb{F}_{p^2}} E_3\times E_4$.
Finally, it turns out that $H_1$ is birational to $E_1$, $H_2$ is birational to $E_2$. Then the claim follows.
\endproof

\begin{proposition}\label{Prop:Genus10}
The curve $U_{p,b}$ in \eqref{CurveGenus10} is $\mathbb{F}_{p^2}$-maximal  when 
$$(p, b) \in\{(89, 58), (101, 96), (131, 100), (191, 116),(227, 69), (239, 94), (251, 3)\}.$$
\end{proposition}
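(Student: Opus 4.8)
The plan is to reduce the $\mathbb{F}_{p^2}$-maximality of $U_{p,b}$ to the supersingularity of the four elliptic curves $E_1,E_2,E_3,E_4$ appearing in the preceding proposition, and then to verify the latter for each of the seven pairs $(p,b)$ by a direct point count.

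First I would record what the $\mathbb{F}_{p^2}$-isogeny $Jac(U_{p,b})\simeq_{\mathbb{F}_{p^2}}E_1^3\times E_2\times E_3^3\times E_4^3$ gives at the level of Frobenius. Writing $t_i$ for the trace of the $\mathbb{F}_{p^2}$-Frobenius on $E_i$, invariance of the zeta function under $\mathbb{F}_{p^2}$-isogeny yields $|U_{p,b}(\mathbb{F}_{p^2})|=p^2+1-(3t_1+t_2+3t_3+3t_4)$. Since $g(U_{p,b})=10=3+1+3+3$ and the Hasse bound over $\mathbb{F}_{p^2}$ forces $t_i\geq -2p$ for each $i$, one gets $|U_{p,b}(\mathbb{F}_{p^2})|\leq p^2+1+2\cdot 10\cdot p$, with equality if and only if $t_1=t_2=t_3=t_4=-2p$, i.e. if and only if each $E_i$ is $\mathbb{F}_{p^2}$-maximal. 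As the $E_i$ are defined over $\mathbb{F}_p$ and $p\geq 5$, this is in turn equivalent to each $E_i$ being supersingular: a supersingular elliptic curve over $\mathbb{F}_p$ with $p\geq 5$ has trace $0$ over $\mathbb{F}_p$, hence trace $-2p$ over $\mathbb{F}_{p^2}$.

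Next I would, for each of the seven pairs $(p,b)$, check with MAGMA \cite{MAGMA} that the four Weierstrass equations specialize to genuine elliptic curves (nonzero discriminant, which also requires $b+3\neq 0$ and $b^3+27\neq 0$; these hold for the listed values) and that each of $E_1,E_2,E_3,E_4$ is supersingular over $\mathbb{F}_p$, equivalently that $\#E_i(\mathbb{F}_{p^2})=(p+1)^2$. Plugging $t_1=t_2=t_3=t_4=-2p$ into the displayed formula gives $|U_{p,b}(\mathbb{F}_{p^2})|=p^2+1+2\cdot 10\cdot p$, which is the Hasse--Weil bound, so $U_{p,b}$ is $\mathbb{F}_{p^2}$-maximal.

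I do not expect a conceptual obstacle: once the Jacobian decomposition of the preceding proposition is available, the argument is a finite verification. The only points needing a little care are that the isogeny is asserted \emph{over} $\mathbb{F}_{p^2}$ (so the trace identity is valid over $\mathbb{F}_{p^2}$ and not merely over $\overline{\mathbb{F}}_p$), and that none of the four elliptic models degenerates for the chosen $(p,b)$; both follow immediately from the hypotheses together with a short check. Equivalently, one could phrase the conclusion through the Tate--Lachaud criterion recalled in the introduction, noting that all $E_i$ being supersingular makes $Jac(U_{p,b})$ $\mathbb{F}_{p^2}$-isogenous to the tenth power of an $\mathbb{F}_{p^2}$-maximal elliptic curve.
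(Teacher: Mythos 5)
Your argument is correct, and it is somewhat more structured than what the paper actually does: the paper's entire proof of this proposition is the single sentence that the result was verified by direct MAGMA computations for $p\le 251$, with no indication of how the check is organized. Your route makes the verification principled by exploiting the preceding Jacobian decomposition $Jac(U_{p,b})\simeq_{\mathbb{F}_{p^2}}E_1^3\times E_2\times E_3^3\times E_4^3$: the trace identity $|U_{p,b}(\mathbb{F}_{p^2})|=p^2+1-(3t_1+t_2+3t_3+3t_4)$ together with the Hasse bound correctly gives that $U_{p,b}$ is $\mathbb{F}_{p^2}$-maximal if and only if every $E_i$ is $\mathbb{F}_{p^2}$-maximal, and since the $E_i$ are defined over $\mathbb{F}_p$ with $p\ge 5$ this is equivalent to their supersingularity (trace $0$ over $\mathbb{F}_p$, hence $-2p$ over $\mathbb{F}_{p^2}$); this is exactly the mechanism the paper itself uses in Section 2 for the genus-$4$ curve and its elliptic factor $\mathcal{E}$. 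What your approach buys is a much lighter computation (supersingularity tests on four elliptic curves per pair $(p,b)$ instead of point counts on a genus-$10$ plane sextic over $\mathbb{F}_{p^2}$), an explicit if-and-only-if criterion that would let one extend the table beyond $p\le 251$, and a clean treatment of the degeneracy conditions ($b+3\neq 0$, $b^3+27\neq 0$) which the paper leaves implicit; what the paper's bare computation buys is independence from the decomposition statement. The only caveat is the one you already flag: the reduction is valid only because the isogeny is asserted over $\mathbb{F}_{p^2}$, so the zeta functions match over that field.
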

\proof
Direct computations done using the package MAGMA \cite{MAGMA} for $ p \le 251$.
\endproof

\section*{Acknowledgments}

The research of D. Bartoli and  M. Giulietti was partially supported by the Italian National Group for Algebraic and Geometric Structures and their Applications (GNSAGA - INdAM). D. Bartoli was supported by {\em Universit\`a degli Studi di Perugia - Fondo ricerca di base Esercizio 2015 - Project: Codici Correttori di Errori}. M. Giulietti was supported by {\em Universit\`a degli Studi di Perugia - Fondo ricerca di base  Esercizio 2015 - Project: Geometrie di Galois, Curve Algebriche su campi finiti e loro Applicazioni}. M. Kawakita was partially supported by JSPS Grant-in-Aid for Scientific Research (C) 17K05344.

\bibliographystyle{abbrv}

\begin{thebibliography}{10}


\bibitem{ABB} Anbar N., Bassa, A., Beelen, P.: A complete characterization of Galois subfields of the generalized Giulietti--Korchm\'aros function field, \textit{Finite Fields Appl.}, {\bf 48}, 318--330 (2017).

\bibitem{AAMZ} E. Alekseenk, S. Aleshnikov, N. Markin, A. Zaytsev,  Optimal curves over finite fields with discriminant $-19$, Finite Fields Appl. {\bf  17},  350--358 (2011).

\bibitem{ATT} Arakelian, N., Tafazolian, S., Torres, F.: On the spectrum for the genera of maximal curves over small fields, {\em Adv. Math. Commun.} \textbf{12}, 143--149 (2018). 


\bibitem{MDF} Bartoli, D., Montanucci, M., Torres, F.: ${\mathbb F}_{p^2}$-maximal curves with 
many automorphisms are Galois-covered by the Hermitian curve, submitted.

\bibitem{BMZ} Bartoli, D., Montanucci, M., Zini, G.: Multi point AG codes on the GK maximal curve, \textit{Des. Codes Cryptogr.} {\bf 86}, 161--177 (2018).

\bibitem{BMZ2} Bartoli, D., Montanucci, M., Zini, G.: AG codes and AG quantum codes from the GGS curve, \textit{Des. Codes Cryptogr.} \textbf{86}, 2315--2344 (2018).

\bibitem{BM} Beelen, P., Montanucci, M.: A new family of maximal curves, \textit{Journal of the London Math. Soc.} \textbf{98}, 573--592 (2018).

\bibitem{BM2} Beelen, P., Montanucci, M.: On subfields of the second generalization of the GK maximal function field, https://arxiv.org/abs/1811.00049.

\bibitem{MAGMA}  Bosma, W.,  Cannon, J.,  Playoust, C.: The Magma algebra system. {I}. The user language, \textit{J. Symbolic Comput.} {\bf 24},  235--265 (1997). 

\bibitem{Bouw}   Bouw, I., Ho, W., Malmskog, B., Scheidler, R., Srinivasan, P., Vincent, C.:
Zeta functions of a class of Artin-Schreier curves with many automorphisms, in: Directions in number theory, 87--124, Assoc. Women Math. Ser., 3, Springer, 2016.

\bibitem{Bro} Brock, B.W.: Superspecial curves of genera two and three, 
Thesis (Ph.D.) Princeton University, (1993), 69 pp.

\bibitem{Modular1}
Br\"oker, R.,  Lauter, K., Sutherland, A.V.: Modular polynomials via isogeny volcanoes. \textit{Mathematics of computation} {\bf 81},  1201--1231 (2012). 

\bibitem{Modular2}
https://math.mit.edu/$\sim$drew/ClassicalModPolys.html (accessed 3 Jan 2019).

\bibitem{CKT} Cossidente, A., Korchm\'aros, G., Torres, F.: Curves of large genus covered by 
the Hermitian curve, \textit{Comm. Algebra} \textbf{28}, 4707--4728 (2000).

\bibitem{DO} Danisman, Y., Ozdemir, M.: On Subfields of GK and Generalized GK Function Fields. \textit{J. Korean Math. Soc.} {\bf 52}, 225--237 (2015).

\bibitem{Djuka} Djukanvi\`c, M.: Split Jacobians and Lower Bounds on Heights, \textit{Thesis}. https://openaccess.leidenuniv.nl/bitstream/handle/1887/54944/thesis.pdf?sequence=1

\bibitem{Edge} Edge, W.L.: A canonical curve of genus 7, \textit{Proc. LMS(3)} \textbf{17}, 207--225 (1967).

\bibitem{Edge1} Edge, W. L.: Three plane sextics and their automorphisms, \textit{ Canad. J. Math.} \textbf{21}, 1263--1278 (1969). 

\bibitem{Elkies} Elkies, N.D.: The existence of infinitely many supersingular primes for every elliptic curve over $\mathbb{Q}$, \textit{Invent. Math.} \textbf{89}, 561--567 (1987). 

\bibitem{Elk} Elkies, N.D.: Shimura curve computations, in: ``Algorithmic number theory" (Portland, OR, 1998, J. P. Buhler, ed.), \textit{Lecture Notes in Comput. Sci.}, \textbf{1423}, Springer-Verlag, Berlin, 1998, 1--47.

\bibitem{FG} Fanali, S., Giulietti, M.: On maximal curves with Frobenius dimension 3, 
\textit{Des. Codes Cryptogr.} \textbf{53}(3), (2009) 165--174.

\bibitem{FG2} Fanali, S., Giulietti, M.: Quotient curves of the GK curve, 
\textit{Adv. Geom.} \textbf{12}, 239--268 (2012).

\bibitem{FGP} Fanali, S., Giulietti, M., Platoni, I.: On maximal curves over finite fields of small order, \textit{Adv. Math. Commun.} \textbf{6}, 107--120 (2012).

\bibitem{Fricke} Fricke, R.: Ueber eine einfache Gruppe von $504$ Operationen, \textit{Math. Ann.}
\textbf{52}, 321--339 (1899).

\bibitem{FT} Fuhrmann, R., Garcia, A., Torres, F.: On maximal curves, \textit{J. Number Theory} \textbf{67}, 29--51 (1997).

\bibitem{FT1}  Fuhrmann, R., Torres, F.: On Weierstrass points and optimal curves, \textit{Rend. Circ. Mat. Palermo Suppl.} \textbf{51} (Recent Progress in Geometry, E. Ballico, G. Korchm\'aros Eds.), 25--46 (1998).

\bibitem{FT2} Fuhrmann, R., Torres, F.: The genus of curves over finite fields with many rational points, \textit{Manuscripta Math.} \textbf{89}, 103--106 (1996).

\bibitem{Garcia} Garcia, A: Curves over finite fields attaining the Hasse-Weil upper bound, \textit{European Congress of Mathematics}, Vol. II (Barcelona, 2000), Progr. Math. \textbf{202}, Birkh\"auser, Basel, 199--205 (2001).

\bibitem{Garcia1} Garcia, A: On curves with many rational points over finite fields, \textit{Finite Fields with Applications to Coding Theory, Cryptography and Related Areas}, Springer, Berlin, 152--163 (2002).

\bibitem{GGS2010}  Garcia, A., G\"uneri, C., Stichtenoth, H.: A generalization of the Giulietti-Korchm\'aros maximal curve, \textit{Advances in Geometry} {\bf 10}, 427--434 (2010).

\bibitem{GS1} Garcia, A., Stichtenoth, H.: Algebraic function fields over finite fields with many rational places, \textit{IEEE Trans. Inform. Theory} \textbf{41}, 1548--1563 (1995).

\bibitem{GSX} Garcia, A., Stichtenoth, H., Xing, C.P.: On subfields of the Hermitian 
function field, \textit{Compositio Math.} {\bf 120}, 137--170 (2000).

\bibitem{GT} Garcia, A., Tafazolian, S.: Certain maximal curves and Cartier operators, 
\textit{Acta Arith.} \textbf{135}, 199--218 (2008).


\bibitem{GHKT} Giulietti, M.,  Hirschfeld, J.W.P.,  Korchm\'aros, G., Torres, F.: Curves covered by the Hermitian curve, \textit{Finite Fields Appl.} \textbf{12}, 539--564 (2006). 


\bibitem{GK} Giulietti, M., Korchm\'aros, G.: A new family of maximal curves over a finite field, \textit{Math. Ann.}  {\bf 343}, 229--245 (2009).

\bibitem{GK2017} 
Giulietti, M., Korchm\'aros, G.: Algebraic curves with many automorphisms,  \textit{Advances in Math.} \textbf{349}, 162--211 (2019). 

\bibitem{GKlargep}
Giulietti, M., Korchm\'aros, G.: Large $p$-groups of automorphisms of algebraic curves in characteristic $p$, 
\textit{J. Algebra} \textbf{481}, 215--249 (2017). 

\bibitem{GMQZ} Giulietti, M.; Montanucci, M.; Quoos, L.; Zini, G.: On some Galois covers of the Suzuki and Ree curves, \textit{J. Number Theory} {\bf 189}, 220--254 (2018). 

\bibitem{GQZ} Giulietti, M., Quoos, L., Zini, G.: Maximal curves from subcovers of the GK-curve, \textit{J. Pure Appl. Algebra} {\bf 220}, 3372--3383 (2016).

\bibitem{GSY} Gunby, B., Smith, A., Yuan, A.: Irreducible canonical representations in 
positive characteristic, \textit{Res. Number Theory} \textbf{1}, Art. 3, 25 pp, (2015).

\bibitem{KR} Kani, E., Rosen, M.: Idempotent relations and factors of Jacobians, \emph{Math. Ann.} {\bf 284},  307--327 (1989).

\bibitem{KTT} Kazemifard, A., Tafazolian, S., Torres, F.: On maximal curves related to Chebyshev polynomials, \textit{Finite Fields Appl.} {\bf 52}, 200--213 (2018). 

\bibitem{Hansen} Hansen, J.P.: Codes on the Klein quartic, ideals and decoding, \textit{IEEE Trans. Inf. Theory} {\bf 33} 923--925 (1987).

\bibitem{HLS1995} Heegard, C., Little, J., Saints, K.: Systematic encoding via Gr\"obner bases for a class of algebraic-geometric Goppa codes, \textit{IEEE Trans. Inf. Theory} {\bf 41}, 1752--1761 (1995).

\bibitem{Hidalgo} Hidalgo, R.A.: Edmonds maps on the Fricke-Macbeath curve, \textit{Ars 
Math. Contemp.} \textbf{8}, 275--289 (2015).

\bibitem{HKT} Hirschfeld, J.W.P., Korchm\'aros, G., Torres, F.: {\it Algebraic Curves over a 
Finite Field}. Princeton Series in Applied Mathematics, Princeton (2008).

\bibitem{Howe} E. W. Howe, Curves of medium genus with many points, Finite Fields Appl. {\bf  47}, 145--160 (2017).
\bibitem{Howe2} E. W. Howe, Quickly constructing curves of genus 4 with many points, in: Contemp. Math., {\bf 663}, Amer. Math. Soc., 149--173,  Providence, RI, (2016). 


\bibitem{Ih} Ihara, Y.: Some remarks on the number of rational points of algebraic curves 
over finite fields, \textit{J. Fac. Sci. Tokyo} \textbf{28}, 721--724 (1981).


\bibitem{Joyner} Joyner, D.: An error-correcting codes package, \textit{SIGSAM Commun. Comput. Algebra} {\bf 39},  65--68 (2005).


\bibitem{Kawakita}  Kawakita, M.Q.: Wiman's and Edge's sextic attaining Serre's bound {II}, Algorithmic arithmetic, geometry, and coding theory, \textit{Contemp. Math.} {\bf 637}, 191--203 (2015).

\bibitem{Kawakita2} Kawakita M.Q.: Some Sextics of Genera Five and Seven Attaining the Serre Bound. In: Budaghyan L., Rodríguez-Henríquez F. (eds) {\em Arithmetic of Finite Fields. WAIFI 2018}. Lecture Notes in Computer Science, vol 11321, 264--271 (2018). 


\bibitem{Kawakita3}  Kawakita M.Q.: Certain Sextics with many rational points, {\em Advances in Mathematics of Communications}. \textbf{1}, 289--292 (2017).

\bibitem{Kawakita4}   Kawakita M.Q.: Wiman’s and Edge’s sextics attaining Serre’s bound, {\em European Journal of Mathematics}, \textbf{4}, 330--334 (2018). 

\bibitem{Kle} Klein, F.: \"Uber die Transformation siebenter Ordnung der elliptischen 
Functionen, \textit{Math. Ann.} \textbf{14}, 428--471 (1989).

\bibitem{Kleiman}  Kleiman, S.L.: Algebraic cycles and the Weil conjectures. In: ``Dix espos\'es sur la cohomologie des sch\'emas", pp. 359--386. North-Holland, Amsterdam, (1968).

\bibitem{KT} Korchm\'aros, G.; Torres, F.: On the genus of a maximal curve, \textit{Math.  Ann.} \textbf{323}, 589--608 (2002).

\bibitem{Lachaud} Lachaud, G.: Sommes d'Eisentein et nombre de points de certaines courbes  alg\'ebriques sur les corps finis, {\em C.R. Acad. Sci. Paris} {\bf 30
5}, 729--732 (1987).

\bibitem{MX} Ma, L., Xing, C.: On subfields of the Hermitian function fields involving the  involution automorphism, \textit{Journal of Number Theory} \textbf{198}, 293--317 (2019). 

\bibitem{MacBeath} Macbeath, A.: On a curve of genus $7$, \textit{Proc. Lond. Math. Soc.} \textbf{15}, 527--542 (1965).


\bibitem{Matthews1} Matthews, G.L.: Codes from the Suzuki function field, \textit{IEEE Trans. Inf. Theory}  {\bf 50}, 3298--3302 (2004).

\bibitem{Matthews2}  Matthews, G.L.: Weierstrass semigroups and codes from a quotient of the Hermitian curve, \textit{Des. Codes Cryptogr.} {\bf 37}, 473--492 (2005).


\bibitem{MZq} Montanucci, M., Zini, G.: On the spectrum of genera of quotients of the Hermitian curve, \textit{Comm. Algebra} \textbf{46}, 4739--4776 (2018).

\bibitem{MZ} Montanucci, M., Zini, G.: Some Ree and Suzuki curves are not Galois covered by the Hermitian curve, \textit{Finite Fields Appl.} \textbf{48}, 175--195 (2017).



\bibitem{N1987}
S. Nakajima, $p$-ranks and automorphism groups of algebraic curves, \emph{Trans. Amer. Math. Soc.} {\bf 303}, 595--607 (1987). 

\bibitem{NIITSUMA} Niitsuma, Y.: Counting Points of the Curve $y^2 = x^{12} + a$ over a Finite Field, TOKYO J. MATH. VOL. {\bf 31}(1), 59--94 (2008).

\bibitem{PAULHUS} Paulhus, J.: Elliptic factors in Jacobians of hyperelliptic curves with certain automorphism groups, in: ANTS {X}-Proceedings of the Tenth Algorithmic Number Theory Symposium, 487–505, Open Book Ser. {\bf 1}, Math. Sci. Publ., Berkeley, CA, (2013).

\bibitem{RI}  Rovi C.,   Izquierdo, M.:
Maximal Curves Covered by the Hermitian Curve, https://www.manypoints.org/upload/1070601654.pdf.




\bibitem{RS} R\"uck, H.G., Stichtenoth, H.: A characterization of Hermitian function fields over finite fields, \textit{J. Reine. Angew. Math.} \textbf{457}, 185--188 (1994).

\bibitem{shas} Shaska, T.: Some special families of hyperelliptic curves, {\em Journal of Algebra and its Applications}, \textit{3}, 75--89 (2004).

\bibitem{Skabe} Skabelund, D.C.: New maximal curves as Ray class field over Deligne-Lusztig curves, Proc. Am. Math. Soc. {\bf 146}, 525--540 (2018).

\bibitem{south} Southerland A.V.: On the evaluation of modular polynomials, preprint, arXiv: 1202.3985.

\bibitem{Stichtenoth1} Stichtenoth, H.: A note on Hermitian codes over $GF(q^2)$, \textit{IEEE Trans. Inf. Theory} {\bf 34}, 1345--1348 (1988).

\bibitem{Sti} Stichtenoth, H.: Algebraic function fields and codes. In:Graduate Texts inMathematics, vol. 254. Springer, Berlin (2009).
 
\bibitem{SX} Stichtenoth, H., Xing, C.P.: The genus of maximal function fields, \textit{Manuscripta Math.} \textbf{86}, 217--224 (1995).

\bibitem{TATE} Tate, J.: Endomorphisms of Abelian Varieties
over Finite Fields, {\em Inventiones Math} \textit{2}, 134--144 (1966).

\bibitem{T^3} Tafazolian, S., Teher\'an-Herrera, A., Torres, F.: Further examples of maximal 
curves which cannot be covered by the Hermitian curve, {\em J. Pure Appl. Algebra} {\bf 220}, 
1122--1132 (2016).

\bibitem{TT} Tafazolian, S., Torres, F.: A note on certain maximal curves, \textit{Comm. Algebra} \textbf{45}, 764--773 (2017).

\bibitem{TTT} Tafazolian, S., Teher\'an-Herrera, A., Torres, F.: Further examples of maximal curves which cannot be covered by the Hermitian curve, \textit{J. Pure Appl. Algebra} {\bf 220}, 1122--1132 (2016). 

\bibitem{TT2} Tafazolian, S., Torres, F.: On the curve $y^n=x^m+x$ over finite fields, \textit{J. Number Theory}  {\bf 145}, 51--66 (2014). 

\bibitem{TT3} Tafazolian, S., Torres, F.: On maximal curves of Fermat type, \textit{Adv. Geom.} {\bf 13}, 613--617, (2013). 

\bibitem{Tiersma} Tiersma, H.J.: Remarks on codes from Hermitian curves, \textit{IEEE Trans. Inf. Theory} {\bf 33}, 605--609 (1987). 

\bibitem{TV} Top, J., Verschoor, C.: Counting points on the Fricke-Macbeath curve over finite fields, \textit{ J. Th. des Nombres Bordeaux}, to appear, (2016).

\bibitem{VM}  Valentini, R.C.,   Madan, M.L.: A hauptsatz of L. E. Dickson and Artin-Schreier extensions, \textit{J. Reine Angew. Math.} \textbf{318}, 156--177 (1980). 

\bibitem{VDG1} van der Geer, G.: Curves over finite fields and codes, \textit{European Congress of Mathematics}, Vol. II (Barcelona, 2000), Progr. Math. \textbf{202}, Birkh\"auser, Basel, 225--238 (2001).

\bibitem{VDG2}  van der Geer, G.: Coding theory and algebraic curves over finite fields: a survey and questions, \textit{Applications of Algebraic Geometry to Coding Theory, Physics and Computation}, NATO Sci. Ser. II Math. Phys. Chem. \textbf{36}, Kluwer, Dordrecht, 139--159 (2001).

\bibitem{Geer} van Der Geer, G.: Counting curves over finite fields, \textit{Finite Fields Appl.} {\bf 32}, 207--232 (2015).


\bibitem{Manypoints}
van der Geer, G., Howe, E.W., Lauter, K.E., Ritzenthaler, C.: Tables of Curves with Many Points, (2009),
http://www.manypoints.org,  retrieved 10/03/2019.

\bibitem{wolfart} J. Wolfart, {Triangle groups and Jacobians of CM type}, preprint at http://www.
math.uni-frankfurt.de/~wolfart/Artikel/jac.pdf. (accessed 3 Jan 2019).

\bibitem{XC2002} Xing, C.P., Chen, H.: Improvements on parameters of one-point AG codes from Hermitian curves, \textit{IEEE Trans. Inf. Theory} {\bf 48}, 535--537 (2002).

\bibitem{XL2000} Xing, C.P., Ling S.: A class of linear codes with good parameters from algebraic curves, \textit{IEEE Trans. Inf. Theory} {\bf 46}, 1527--1532 (2000).

\bibitem{YK1992} Yang, K., Kumar, P.V.: On the true minimum distance of Hermitian codes. In: Coding Theory and Algebraic Geometry. Lecture Notes in Mathematics, vol. 1518, pp. 99--107. Springer, Berlin (1992).



\end{thebibliography}

\end{document}